\newtheorem{theorem}{Theorem}[section]
\newtheorem{lemma}[theorem]{Lemma}
\newtheorem{proposition}[theorem]{Proposition}
\newtheorem{corollary}[theorem]{Corollary}
\newtheorem{remark}[theorem]{Remark}
\theoremstyle{definition}
\newtheorem{thmy}{Theorem}
\newenvironment{oldtheorem}{\stepcounter{thm}\begin{thmy}}{\end{thmy}}
\newtheorem*{note*}{Note}
\newcommand{\bd}{\textnormal{bd}\,}
\newcommand{\R}{\mathbb R}
\newcommand{\Rn}{{\mathbb R}^n}
\newcommand{\Sn}{\mathbb{ S}^{n-1}}
\newcommand{\Sen}{{\mathbb S}^1}
\newcommand{\Ha}{{{\cal H}^{n-1}}}
\newcommand{\inter}{\textnormal{int}\,}
\newcommand{\cl}{\textnormal{cl}\,}
\begin{document}


\title{\bf On a non-homogeneous version of a problem of Firey}
\date{}
\medskip
\author{Christos Saroglou}
\maketitle

\begin{abstract}
We investigate the uniqueness for the Monge-Amp\`{e}re type equation 
\begin{equation}
\label{eq-abstract}
\textnormal{det}(u_{ij}+\delta_{ij}u)_{i,j=1}^{n-1}=G(u),\qquad\textnormal{on} \ \ \Sn,
\end{equation}
where $u$ is the restriction of the support function on the sphere $\Sn$ of a convex body that contains the origin in its interior and $G:(0,\infty)\to(0,\infty)$ is a continuous function.  The problem was initiated by Firey (1974) who, in the case $G(\theta)=\theta^{-1}$, asked if $u\equiv 1$ is the unique solution to (1). Recently, Brendle, Choi and Daskalopoulos \cite{brendle_choi_daskalopoulos_2017} proved that if $G(\theta)=\theta^{-p}$, $p>-n-1$, then $u$ has to be constant, providing in particular a complete solution to Firey's problem. Our primary goal is to obtain uniqueness (or nearly uniqueness) results for (\ref{eq-abstract}) for a broader family of functions $G$. Our approach is very different than the techniques developed in \cite{brendle_choi_daskalopoulos_2017}.
\end{abstract}

\section{Introduction}
The primary goal of this note is to obtain uniqueness results for the Monge-Amp\`{e}re type equation
\begin{equation}\label{eq-monge-ampere}
\textnormal{det}(u_{ij}+\delta_{ij}u)_{i,j=1}^{n-1}=G(u),\qquad\textnormal{on} \ \ \Sn,
\end{equation}
where $G:(0,\infty)\to\R$ is a strictly positive continuous function and $u$ is the restriction on $\Sn$ of a sub-linear positively homogeneous function defined on $\Rn$. Here, $\Sn$ is the Euclidean unit sphere of $\Rn$, $u_{ij}$ denotes the covariant derivative of $u$ with respect to a local orthonormal frame on $\Sn$ and $\delta_{ij}$ are the Kronecker symbols.    

Let $K$ be a convex body (that is, convex compact with non-empty interior) in $\Rn$. The \emph{support function} $h_K:\Rn\to\R$ of $K$ is defined by
$$h_K(x)=\max\{\langle x,y\rangle:y\in K\}.$$ 
Recall (see e.g. \cite[Chapter 1]{schneider_2013}) that a function $u:\Rn\to\R$ is sub-linear and positively homogeneous if and only if there exists a (unique) convex body $K$ in $\Rn$, such that $u=h_K$. Furthermore, $h_K$ is strictly positive on $\Rn\setminus\{o\}$ if and only if $K$ contains $o$ (the origin) in its interior.

The \emph{surface area measure} $S_K$ of $K$ is a Borel measure on $\Sn$, given by
$$S_K(\omega)=\Ha(\{x:x\textnormal{ is a boundary point of }K\textnormal{ and there exists }v\in\omega,\textnormal{ such that } \langle x,v\rangle=h_K(v)\}),$$
for any Borel subset $\omega$ of $\Sn$. The notation $\Ha(\cdot)$ stands for the $(n-1)$-dimensional Hausdorff measure in $\Rn$. It is clear that $S_K$ is invariant under translation of $K$. Moreover, the barycentre of $S_K$ is always at the origin. If $S_K$ is absolutely continuous with respect to $\Ha$, its density will be denoted by $f_K$. It is well known that 
\begin{equation}\label{eq-F_K}
f_K(v)=\textnormal{det}\left(h_{K,ij}(v)+\delta_{ij}h_K(v)\right),\qquad\textnormal{for almost every }v \in\Sn .
\end{equation}
If the boundary of $K$ is of class $C^2$ and its curvature ${\cal K}(K,\cdot)$ is strictly positive, then $f_K$ is continuous (i.e. has a continuous representative) and it holds
\begin{equation}\label{eq-curvature}
f_K(v)=\frac{1}{{\cal K}(K,\eta_K^{-1}(v))},\qquad\textnormal{for all }v\in\Sn .
\end{equation}
Here, $\eta_K$ is the Gauss map (see Section 2). We refer to \cite{schneider_2013} for concepts related to (\ref{eq-F_K}), (\ref{eq-curvature}) and the definition of the surface area measure.

As (\ref{eq-F_K}) shows, (\ref{eq-monge-ampere}) can be re-written as follows
\begin{equation}\label{eq-thm-main}
dS_K=G(h_K)d\Ha.
\end{equation}
Equation (\ref{eq-monge-ampere}) and its weak form (\ref{eq-thm-main}) have appeared in several places in literature (in the context of different areas of Mathematics, such as Convex Geometry, Differential Geometry and PDE's). Below, we list some of them.

A result due to Simon \cite{simon_1967} states the following.
\begin{oldtheorem}\label{thm-old-Simon}(Simon \cite{simon_1967})
Let $u:\Sn\to(0,\infty)$ be the restriction of a support function on $\Sn$, $k\in\{1,\dots,n-1\}$, $\lambda_k(u)$ be the $k$-th elementary symmetric polynomial of the reciprocals of the eigenvalues of the matrix $\textnormal{det}(u_{ij}+\delta_{ij}u)_{i,j=1}^{n-1}$ and $G$ be a strictly positive non-decreasing function. If $u$ satisfies 
$$\lambda_k(u)=G(u),\qquad\textnormal{on }\ \Sn,$$then $u$ is constant.
\end{oldtheorem}
\noindent Theorem \ref{thm-old-Simon}, then, corresponds to (\ref{eq-monge-ampere}) if $k=n-1$.


Firey \cite{firey_1974} posed the following problem: Assume that $G(\theta)=\theta^{-1}$. Is it true that the unit ball is the only convex body $K$ that solves (\ref{eq-thm-main})? He noticed that the answer to the problem is affirmative if one additionally assumes that $b(K)=o$, where $b(K)$ is the barycentre of $K$. This follows as an almost immediate consequence of the Blaschke-Santal\'{o} inequality (see next section for notation)
$$V(K)V(K^\circ)\leq V(B_2^n)^2.$$
Andrews considered a generalization of Firey's problem by replacing $\theta^{-1}$ with $\theta^{p}$, $p\in\R\setminus\{0\}$. It turns out  that uniqueness fails if $p\leq -n-1$ (see \cite{andrews_2000} and \cite{andrews_2002} for a complete classification of solutions in the plane). The case $p=-n-1$ was settled in \cite{petty_1985}; it was shown that, in this case, solutions are precisely the support functions of ellipsoids centered at the origin.  The problem of classifying the solutions of (\ref{eq-thm-main}) in the case $G(\theta)=\theta^{p}$, $p\in\R\setminus\{0\}$, is closely related to the asymptotic behaviour of the solution to the $a$-\emph{Gauss curvature flow} (also introduced by Firey in \cite{firey_1974} for $a=1$ and by Andrews in \cite{andrews_2000} and \cite{andrews_2002} for general $a$), where $a=-1/p$, and has attracted considerable attention in the past decades.    

Brendle, Choi and Daskalopoulos \cite{brendle_choi_daskalopoulos_2017} recently solved the problem for $p>-n-1$ and, hence, gave an affirmative answer to Firey's long standing question.

\begin{oldtheorem}\label{thm-old-BCD}(Brendle, Choi, Daskalopoulos \cite{brendle_choi_daskalopoulos_2017})
If $K$ is a convex body, containing the origin in its interior, that solves (\ref{eq-thm-main}) for $G(\theta)=\theta^p$, $\theta>0$, $p>-n-1$, then $K$ is a Euclidean ball centered at the origin. 
\end{oldtheorem}
Intermediate and related results to the preceding theorem include (but certainly not limited to) \cite{Tso_1985, chow_1985, chow_tsai_1998, andrews_1999, gerthart_2014,  huang_liu_xu_2015, andrews_guan_ni_2016, guan_ni_2017}.
Equation (\ref{eq-thm-main}) can be generalized as follows.
\begin{equation}\label{eq-orlicz} dS_K=G(h_K)d\mu,\qquad \textnormal{on}\ \ \Sn,\end{equation}
where $\mu$ is a given Borel measure on $\Sn$.
The case where $G\equiv 1$ is the classical Minkowski problem. Minkowski's Existence and Uniqueness Theorem states that there exists a solution $K$ to the previous equation if and only if the barycentre of $\mu$ is the origin and $\mu$ is not concentrated on any great sub-sphere of $\Sn$ and, furthermore, any solution $K$ is unique up to translation. The case where $G(\theta)=\theta^{1-p}$, $p\in\R$ is the $L^p$ Minkowski problem introduced by Lutwak \cite{lutwak_1993}. While, as of the existence and uniqueness, the problem has been settled for $p\geq 1$ \cite{lutwak_1993, chou_wang_2006}, several problems remain open for $p\leq 1$ (see e.g. \cite{boroczky_lutwak_yang_zhang_2013, zhu_2014, bianchi_2019} for related results and open problems and \cite{huang_lyz_2016} for an important generalization). In particular, the question of uniqueness in the case $p=0$ and $\mu$, $h_K$ being even is now considered to be a major open problem in Convex Geometry (see for instance  \cite{stancu_2002, stancu_2003, boroczky_lutwak_yang_zhang_2012, saroglou_2015, Kolesnikov_milman_2020+}).

If $G$ is considered to be a general continuous function, (\ref{eq-orlicz}) was introduced in \cite{haberl_lyz_2010} (under some assumptions on $G$). While important results concerning existence have been obtained \cite{haberl_lyz_2010, jian_lu_2019,  }, very few facts are known about uniqueness. In view of (\ref{eq-curvature}), (\ref{eq-thm-main}) should be viewed as the constant curvature case in the Orlicz-Minkowski problem. 

Before we state our main results, we will need to agree on some notation. Given a positive integer $n$, set ${\cal A}(n)$ to be the positive cone, consisting of all continuous functions $G:(0,\infty)\to (0,\infty)$, such that for some (any) antiderivative $H$ of $G$, the function
$$(0,\infty)\ni\theta\mapsto\theta G(\theta)+nH(\theta)$$is strictly increasing. Notice that any differentiable function $G$ with $\theta G'(\theta)+(n+1)G(\theta)>0$, for all $\theta>0$, belongs to ${\cal A}(n)$.
 
\begin{theorem}\label{thm-main}
Let $K$ be a convex body in $\Rn$ that contains the origin in its interior and solves (\ref{eq-thm-main}) for some $G\in{\cal A}(n)$.
Then, $K$ is symmetric with respect to some straight line through the origin. In addition, $K$ is a Euclidean ball if at least one of the following hold
\begin{enumerate}[a)]
\item $b(K)=0$
\item $G$ is monotone.
\end{enumerate}In particular, if $G$ is strictly monotone or $b(K)=o$, then $K$ is a Euclidean ball centered at the origin.
\end{theorem}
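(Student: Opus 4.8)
The plan is to exploit the divergence/integral structure behind $\mathcal A(n)$ together with the Brunn--Minkowski–type machinery attached to the support function. Writing $u=h_K$ and $\mathrm{d}S_K=f_K\,\mathrm{d}\mathcal H^{n-1}$ with $f_K=\det(u_{ij}+\delta_{ij}u)$, the equation reads $f_K=G(u)$ on $\mathbb S^{n-1}$. The first step is a \emph{Minkowski-type identity}: for any convex body, $V(K)=\tfrac1n\int_{\mathbb S^{n-1}} u\,\mathrm dS_K$, so the equation gives $nV(K)=\int_{\mathbb S^{n-1}} u\,G(u)\,\mathrm d\mathcal H^{n-1}$. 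More importantly, I would look at a one-parameter family $K_t$ obtained by a suitable deformation (dilation, or the flow $u\mapsto u+t\varphi$), compute $\tfrac{d}{dt}$ of the ``energy'' $\mathcal E(K)=V(K)-\int_{\mathbb S^{n-1}}H(u)\,\mathrm d\mathcal H^{n-1}$ (with $H'=G$), and observe that $K$ solves \eqref{eq-thm-main} precisely when $\mathcal E$ is stationary. The monotonicity hypothesis ``$\theta\mapsto \theta G(\theta)+nH(\theta)$ strictly increasing'' is exactly the condition that makes the relevant second-variation / rearrangement functional strictly convex along dilations, which should force the extremal $K$ to be a ball \emph{once one knows it is centered} or \emph{once $b(K)=o$}.

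Next, to get the line of symmetry unconditionally, I would use a reflection (two-point / polarization) argument. Fix a hyperplane $\xi^{\perp}$ through the origin and let $R_\xi$ be the reflection in it; set $K^{+}=K\cap\{ \langle x,\xi\rangle\ge 0\}$ and build the Schwarz-type reflected bodies $K_1,K_2$ (union of $K^{+}$ with its mirror image, and the analogous construction on the other side). One shows that $K_1,K_2$ again satisfy an inequality version of \eqref{eq-thm-main}, that $V(K_1)+V(K_2)\ge 2V(K)$ with the energy-type functional behaving monotonically under this symmetrization, and that equality propagates back. Because this can be done for \emph{every} direction $\xi$, a standard argument (a body all of whose reflections through origin-hyperplanes in a rich family of directions are again solutions, combined with the strict monotonicity in $\mathcal A(n)$) yields that $K$ is actually symmetric about a line through the origin — the residual freedom being the single ``equality direction.'' The delicate point is making the symmetrization compatible with the nonlinear right-hand side $G(h_K)$ rather than a fixed measure; this is where $G\in\mathcal A(n)$ is used, via the concavity/convexity of $\theta\mapsto H(\theta)$-weighted volume.

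The final step handles the two sufficient conditions. If $b(K)=o$: combine the integral identity with the Blaschke--Santal\'o inequality exactly as in Firey's observation, but now the weight is $G$ rather than $\theta^{-1}$; the condition defining $\mathcal A(n)$ guarantees the relevant inequality between $\int u G(u)$ and $V(K), V(K^{\circ})$ is saturated only at the ball. If $G$ is monotone: having already reduced to a body of revolution about a line $\ell$ through $o$, write the problem as a one-dimensional ODE for the profile; the monotonicity of $G$ plus membership in $\mathcal A(n)$ forces the ODE's only bounded positive solution (that closes up smoothly at the poles and keeps $o$ interior) to be the constant, i.e.\ $u\equiv$ const, whence $K$ is a centered ball — and if in addition $G$ is \emph{strictly} monotone, the line of symmetry is pinned and no eccentric solution survives, giving the ``in particular'' clause. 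I expect the \textbf{main obstacle} to be the reflection step: verifying that the symmetrized bodies still satisfy the (in)equality \eqref{eq-thm-main} with the \emph{same} nonlinearity $G$, and that equality in the resulting Brunn--Minkowski/Santal\'o-type inequality transfers back to symmetry of $K$ itself, rather than merely to symmetry of some auxiliary body.
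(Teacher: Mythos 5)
Your plan shares some of the paper's general themes (a variational/energy structure behind $\mathcal A(n)$, symmetrization, a Blaschke--Santal\'o connection), but at each of the three decisive steps the mechanism you propose is either unsubstantiated or would fail, and the step you yourself flag as the ``main obstacle'' is precisely where the real work lies. Concretely: (1) the reflection step does not work as described. The body $K^{+}\cup R_\xi K^{+}$ is in general not convex, and even when it is, there is no reason it satisfies (or ``an inequality version of'') the equation $f=G(h)$ near the cut: gluing destroys the relation between the curvature density and the support function along a whole zone of normals. The paper never reflects to get symmetry; it proves that $K$ is symmetric with respect to \emph{every} hyperplane through $o$ that bisects the volume (and with respect to every hyperplane through $o$ when $b(K)=o$) by a differential Steiner-symmetrization argument: the representation $G(\theta)=\bigl(\int_0^1 r^{n-1}F(r\theta)\,dr\bigr)'$ from the $\mathcal A(n)$ hypothesis turns $\int H(h)\,d\Ha$ into a layer-cake integral of volumes $V(B_2^n\cap a(L^\circ)_t)$, whose one-sided derivative under the shadow system is signed (Proposition \ref{lemma-prop-st-ym}, Lemma \ref{lemma-eq-l-st-s-der}), while Minkowski's first inequality plus the Meyer--Reisner convexity of polar volumes gives the opposite sign for $\int h' \, dS_K$ (Lemma \ref{lemma-before-key}); axial symmetry then follows from a separate linear-algebra/continuity argument (Facts 1--2, Cases I--II in the proof of Proposition \ref{lemma-key}), not from polarization. (2) For $b(K)=o$, ``exactly as in Firey'' does not transfer: Firey's computation uses the specific algebra of $G(\theta)=\theta^{-1}$, and you do not exhibit any inequality between $\int h\,G(h)$, $V(K)$ and $V(K^\circ)$ that is saturated only at balls for general $G\in\mathcal A(n)$; the paper instead gets ball-ness from symmetry in all directions.

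(3) The monotone case is the hardest part of the paper and your ODE claim hand-waves it away. After reducing to a body of revolution there is no known elementary classification of the closed positive profile solutions of the resulting ODE; indeed Theorem \ref{thm-counterexample} shows that for suitable non-monotone $G\in\mathcal A(n)$, arbitrarily close to a constant, non-spherical axially symmetric solutions exist, so any argument must use monotonicity of $G$ in an essential, quantitative way. The paper's route is entirely different: it proves the profile support function $\overline h_{Q(K)}$ is monotone along the axis (Proposition \ref{prop-more}) by a contradiction argument that glues arcs of $\bd K$, $\bd(-K)$ and spherical caps into a centrally symmetric non-ellipsoidal body solving the equation approximately with a modified $G_\varepsilon$ (Lemmas \ref{lemma-facts-G_varepsilon}, \ref{prop-gluing}, \ref{lemma-technical}, plus the curvature comparison of Proposition \ref{prop-appendix}), and then contradicts the strict Steiner-symmetrization inequality for non-ellipsoids (Lemma \ref{lemma-before-key}(ii), via Saint-Raymond/Meyer--Pajor); finally, monotonicity of $G\circ \overline h_K$ together with the fact that $S_K$ has barycentre at the origin forces $G(h_K)$ to be constant. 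None of this machinery, nor a substitute for it, appears in your outline, so as it stands the proposal is a programme with genuine gaps rather than a proof.
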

Clearly, the function $\theta^p$ is contained in the class ${\cal A}(n)$, for $p>-n-1$, and any continuous strictly increasing function $G:(0,\infty)\to(0,\infty)$ is also contained in ${\cal A}(n)$. Hence, one immediately recovers Theorem \ref{thm-old-BCD} and Theorem \ref{thm-old-Simon}, in the case $k=n-1$, from Theorem \ref{thm-main}. Let us demonstrate that none of the assumptions of Theorem \ref{thm-main} can be removed. Indeed, if $G(\theta)=\theta^{-n-1}$, then (as mentioned previously) solutions of (\ref{eq-thm-main}) do not need to be axially symmetric even if $b(K)=o$, thus the assumption of $G$ being a member of ${\cal A}(n)$ cannot be omitted. The necessity of the monotonicity of $G$ follows from the next theorem.
\begin{theorem}\label{thm-counterexample}
Let $a\in\Rn$. There exists a centrally symmetric, non-spherical, strictly convex body $K$ in $\Rn$ with $C^\infty$ boundary and a function $G\in {\cal A}(n)$, such that $K+a$ contains the origin in its interior and satisfies
$$f_K(v)=f_{K+a}(v)=G(h_{K+a}(v)),\qquad\textnormal{for all }v\in\Sn.$$In fact, $K$ can be taken to be arbitrarily close to a Euclidean ball and $G$ can be taken to be arbitrarily close to a constant.\end{theorem}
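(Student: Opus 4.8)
The plan is to construct $K$ explicitly, as a body of revolution that is a small perturbation of a ball, so that the Monge--Amp\`ere equation collapses to a one--variable identity which we can invert to produce $G$. Fix a unit vector $e\in\Sn$, taking $e=a/|a|$ when $a\neq o$, write $t=t(v):=\langle e,v\rangle$ for $v\in\Sn$, and for a small parameter $\epsilon>0$ put
$$h_K(v)=1+\epsilon\,t(v)^2+\Big(\tfrac12-|a|\Big)t(v),\qquad v\in\Sn.$$
The summand $(\tfrac12-|a|)t$ is the support function of the point $(\tfrac12-|a|)e$ and so contributes nothing to $h_{K,ij}+\delta_{ij}h_K$; since $t^2\in C^\infty(\Sn)$, for $\epsilon$ small the matrix $h_{K,ij}+\delta_{ij}h_K$ equals $I+O(\epsilon)$ and is positive definite, so $h_K$ is the support function of a strictly convex body $K$ with $C^\infty$ boundary. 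As $h_K(v)-\langle(\tfrac12-|a|)e,v\rangle=1+\epsilon\,t(v)^2$ is invariant under $v\mapsto-v$, the body $K$ is symmetric about $(\tfrac12-|a|)e$, hence centrally symmetric; it is not a ball when $\epsilon\neq0$ (its support function is not affine), and $h_K\to 1+(\tfrac12-|a|)t$ in $C^\infty(\Sn)$ as $\epsilon\to0$, so $K$ is as close to a Euclidean ball as we wish.

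Next I would isolate the two functions of the single variable $t$ that govern the problem. Since $h_K$ depends on $v$ only through $t$, so does $f_K$; write $f_K(v)=F(t(v))$ with $F\colon[-1,1]\to(0,\infty)$ smooth. The body $1+\epsilon\,t^2$ is symmetric under reflection in $e^\perp$, so $F$ is even; and $F$ is non-constant, for otherwise $S_K$ would be a multiple of $\Ha$ and Minkowski's uniqueness theorem would force $K$ to be a ball. Being even and non-constant, $F$ is not monotone on $[-1,1]$. On the other hand, translation invariance of the surface area measure gives $f_{K+a}=f_K$, while $h_{K+a}(v)=h_K(v)+\langle a,v\rangle=1+\epsilon\,t(v)^2+\tfrac12\,t(v)=:\psi(t(v))$, where $\psi(s)=1+\epsilon s^2+\tfrac12 s$. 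For $\epsilon<\tfrac14$ one has $\psi'(s)=2\epsilon s+\tfrac12>0$ on $[-1,1]$, so $\psi$ is an increasing $C^\infty$ diffeomorphism of $[-1,1]$ onto the interval $I:=[\tfrac12+\epsilon,\tfrac32+\epsilon]\subset(0,\infty)$; in particular $h_{K+a}>0$, i.e.\ $K+a$ contains $o$ in its interior, and $b(K+a)=b(K)+a=\tfrac12 e\neq o$.

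Then I would put $G_0:=F\circ\psi^{-1}$ on $I$. This is continuous, strictly positive, not monotone (an increasing reparametrization of the non-monotone $F$), and by construction $G_0(h_{K+a}(v))=F(t(v))=f_K(v)=f_{K+a}(v)$ for every $v\in\Sn$, which is exactly the required identity over $I$. Extend $G_0$ to a continuous $G\colon(0,\infty)\to(0,\infty)$ which is constant on each of the two components of $(0,\infty)\setminus I$, equal there to the adjacent endpoint value of $G_0$, so that $G|_I=G_0$. To see $G\in{\cal A}(n)$, note that off $I$ the function $G$ is a positive constant $c$, for which $\theta G(\theta)+nH(\theta)=(n+1)c\,\theta+\textnormal{const}$ is strictly increasing; and on $I$, $G=G_0$ is smooth with $G_0'=(F'/\psi')\circ\psi^{-1}$, so, since $F'=O(\epsilon)$ while $\psi'\geq\tfrac12-2\epsilon$ stays bounded away from $0$, we have $G_0=1+O(\epsilon)$ and $G_0'=O(\epsilon)$, whence $\theta G_0'(\theta)+(n+1)G_0(\theta)\to n+1>0$ uniformly on $I$ as $\epsilon\to0$; thus $\theta G(\theta)+nH(\theta)$ is strictly increasing on $I$ for $\epsilon$ small, and, being continuous and strictly increasing on each of the three pieces, it is strictly increasing on all of $(0,\infty)$. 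Shrinking $\epsilon$ further makes $G$ as close to the constant $1$ as desired, which completes the construction. (The fact that $G$ is \emph{not} monotone is what makes the example consistent with Theorem \ref{thm-main}.)

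The one genuinely delicate point is the tension between the size of the perturbation and membership in ${\cal A}(n)$. Making $\psi$ invertible --- which is what renders $G_0$ well defined and what places $K+a$ correctly around the origin --- is essentially free, since the linear term dominates the quadratic one; but $G_0'=F'/\psi'$ could a priori be large, ejecting $G$ from ${\cal A}(n)$. Were the linear term absent (so that $b(K)=o$), $\psi$ would fail to be monotone and $G_0'$ would blow up like $1/\epsilon$, in agreement with the $b(K)=o$ case of Theorem \ref{thm-main}. It is precisely the non-homogeneous linear summand $\tfrac12 t$ in $h_K$ (and, when $a\neq o$, its interaction with $\langle a,\cdot\rangle$) that keeps $\psi'$ bounded below, and hence keeps $G$ arbitrarily close to a constant while remaining in ${\cal A}(n)$; the remaining verifications are routine.
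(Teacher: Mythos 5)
Your construction is, at its analytic core, the same as the paper's: perturb a Euclidean ball by a small rotationally invariant bump ($\epsilon t^2$ for you, $(1/m)g(\langle v,e_2\rangle)$ in the paper), observe that both $f_K$ and $h_{K+a}$ depend on $v$ only through $t=\langle e,v\rangle$, use the translation to make the reparametrized support function strictly monotone in $t$, define $G$ as $f$ composed with its inverse, and check that $G=1+O(\epsilon)$, $G'=O(\epsilon)$ gives membership in ${\cal A}(n)$ (via the increasing-pieces argument, which is what Lemma \ref{lemma-facts-extension} and Remark \ref{remark-we-may-assume} are for) and closeness to a constant. Those verifications are correct as written.

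The genuine gap is the central symmetry claim. In this paper ``centrally symmetric'' means $K=-K$ (this is how it is used in Sections 6 and 7, where the barycentre of a centrally symmetric body is taken to be the origin, and it is how the paper's own construction works: $h_{K_m}(v)=r+(1/m)g(\langle v,e_2\rangle)$ with $g$ even). Your body, with support function $1+\epsilon t^2+(\tfrac12-|a|)t$, is symmetric about the point $(\tfrac12-|a|)e$, not about the origin, unless $|a|=\tfrac12$; reading ``centrally symmetric'' as ``symmetric about some point'' would also empty the statement of its dependence on $a$, since any solution body $M$ could then be renamed $K:=M-a$. The splitting of the linear term between $K$ and the translation is exactly what destroys the origin symmetry. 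The repair is easy and keeps your argument intact: let the whole linear term come from $a$, i.e.\ take $h_K(v)=r\bigl(1+\epsilon t(v)^2\bigr)$ with $r>|a|$, so that $K=-K$ and $h_{K+a}(v)=r+r\epsilon t^2+|a|t=:\psi(t)$; for $0<\epsilon<|a|/(2r)$ the function $\psi$ is strictly increasing and positive on $[-1,1]$, $\psi'\ge |a|-2r\epsilon$ stays bounded below as $\epsilon\to 0^+$, and the rest of your proof goes through verbatim, with $G\to r^{n-1}$. (Alternatively, your example as it stands proves the case $a=\tfrac12 e$ with an origin-symmetric $K'$, and general $a\neq o$ follows by rotating and dilating, since dilation replaces $G(\theta)$ by $\mu^{n-1}G(\theta/\mu)$, which stays in ${\cal A}(n)$.) Note that any such repair genuinely requires $a\neq o$ --- as does the paper's proof, which takes $a=\lambda e_2$ with $\lambda>0$ --- because for $a=o$ an origin-symmetric example would contradict Theorem \ref{thm-main} (a); this is exactly the degeneration of $\psi'$ that you yourself point out in your closing remark.
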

We should remark that the method used to prove Theorem \ref{thm-main} is purely geometric and, therefore, quite different than the method used in \cite{brendle_choi_daskalopoulos_2017}. More specifically,  we employ a quick argument (although some preparation is needed; see Sections 2, 3, 4) based on Steiner-symmetrization (ultimately related to the Blaschke-Santal\'{o} inequality), to show that if $G\in {\cal A}(n)$, $K$ is a solution of (\ref{eq-thm-main}) and $H$ is a hyperplane through the origin that splits $K$ into two sets of equal volume, then $K$ has to be symmetric with respect to $H$, while if $b(K)=o$ then $K$ is a Euclidean ball. Based on this, we show in Section 5 that $K$ is always axially symmetric. Thus, the remaining part of the theorem essentially reduces to a 2-dimensional problem (1-dimensional actually if one attempts to solve the associated ODE). The result will follow by a careful modification of a solution $K$ (Sections 6 and 7), so that the resulting body is non-spherical, centrally symmetric and (approximately) solves (\ref{eq-thm-main}). Theorem \ref{thm-counterexample} will be proved in Section 8.


\section{Preliminaries}
\hspace*{1.5em}In this section, we fix some notation and state some basic facts about convex bodies that are necessary for our purposes. As general references, we state the books of Schneider \cite{schneider_2013} and Gardner \cite{gardner_2006}. We denote the origin by $o$ and the standard (Euclidean) unit ball of $\Rn$ by $B_2^n$. The closure, the interior, the boundary and the volume (i.e. Lebesgue measure) of a set $A$ will be denoted by $\cl A$ $\inter A$, $\bd A$ and $V(A)$ respectively. The orthogonal projection of a set or a vector onto a subspace $H$ will be denoted by $\cdot|H$.

A convex body $K$ is said to be \emph{regular} if the supporting hyperplane at each boundary point of $K$ is unique. It turns out that $K$ is regular if and only if its boundary is of class $C^1$. Furthermore, $K$ is of class $C_+^2$ if its boundary is of class $C^2$ (then, $h_K$ is also of class $C^2$) and the quantity $\textnormal{det}\left(h_{K,ij}(v)+\delta_{ij}h_K(v)\right)$ is strictly positive everywhere on $\Sn$.

The \emph{Gauss map} $$\eta_K:\bd K\to \Sn$$ takes every boundary point $x$ of the convex body $K$ to the (unique) outer unit normal vector that supports $K$ at $x$. If $K$ is strictly convex, then $\eta_K$ is invertible and $h_K$ is $C^1$. If, additionally, $K$ happens to be regular, then $\eta_K$ and $\eta_K^{-1}$ are homeomorphisms and $\eta_K^{-1}$ is given by

\begin{equation}\label{eq-gauss-map}
\eta_K^{-1}(v)=\nabla h_K(v),\qquad\textnormal{for all }v\in\Sn,
\end{equation}where $\nabla h_K$ is the usual gradient of $h_K$ in $\Rn$.
We also note that the surface area measure of any strictly convex body $K$ is absolutely continuous with respect to $\Ha.$ 

Let $L$, $M$ be convex bodies in $\Rn$. The \emph{first Minkowski mixed volume} $V(L,M)$ is defined by
$$V(L,M)=\frac{1}{n}\frac{d}{dt}V(tL+M)\Big|_{t=0^+}.$$
Here, $A+B:=\{x+y:x\in A,\ y\in B\}$ is the Minkowski sum of the sets $A$ and $B$. 
The following formula is well known
$$V(L,M)=\frac{1}{n}\int_{\Sn}h_{L}(v)dS_M(v).$$
A basic inequality concerning mixed volumes is Minkowski's first inequality, which reads as follows:
\begin{equation}\label{eq-Minkowski}V(L,M)\geq V(L)^{1/n}V(M)^{(n-1)/n}.\end{equation}

The \emph{polar body} $L^\circ$ of $L$ is defined to be the convex set  
$$L^\circ=\{x\in\Rn: \langle x,y\rangle \leq 1,\textnormal{ for all }y\in L\}.$$
The \emph{Santal\'{o} point} $s(L)$ of $L$ is defined to be the unique point $z\in \Rn$, such that $$V((L-z)^\circ)=\min\{V((L-x)^\circ):x\in\Rn\}.$$
Set, also, $L^*:=(L-s(L))^\circ$.
It is well known that $s(L)=o$ if and only if $b(L^\circ)=o$. Moreover, $L-s(L)$ contains the origin in its interior, while if $L$ contains the origin in its interior, then $L^\circ$ is also a convex body that contains the origin in its interior. In addition, it holds $(L^\circ)^\circ=L$.

The \emph{radial function} $\rho_L:\Sn\to \R$ is given by
$$\rho_L(v)=\sup\{\lambda\geq 0: \lambda v\in L\}.$$

If $o\in\inter L$, then the radial function of $L$ and the support function of the polar body $L^\circ$ are related by
\begin{equation}\label{eq-radial-function}
\rho_L(v)=\frac{1}{h_{L^\circ}(v)},\qquad \textnormal{for all v}\in\Sn.
\end{equation}

Throughout this paper, $e$ will be a fixed unit vector, unless stated otherwise. Let $J$ be a subinterval of the real line. A \emph{shadow system} along the direction $e$ is a family of convex bodies $\{L(t)\}_{t\in J}$ of the form
$$L(t)=\textnormal{conv}\{x+\beta(x)e:x\in L\},\qquad t\in J,$$for some bounded function $\beta:L\to\R$. 
Shadow systems where introduced in \cite{rogers_shephard_1958}. It is known \cite{rogers_shephard_1958,shephard_1964} (see also e.g. \cite{campi_gronchi_2006_a}, \cite{campi_gronchi_2006}) that several functionals (such as quermassintegrals) 
on convex bodies are convex with respect to the parameter $t$. We are going to use in particular that $h_{L(t)}(v)$ is convex in $t$, for any $v\in\Sn$. A simple consequence of this fact is the following lemma. 
\begin{lemma}\label{lemma-differentiability-1}
Let $L$ be a convex body in $\Rn$, $\{L(t)\}_{t\in[-1,1]}$ be a shadow system along direction $e$ and let $t_0\in[-1,1]$. Then, the function
$$(t,v)\mapsto \frac{h_{L(t)}(v)-h_{L(t_0)}(v)}{t-t_0}$$
is uniformly bounded on $([-1,1]\setminus\{t_0\})\times\Sn$.
\end{lemma}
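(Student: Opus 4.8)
The plan is to establish more than bare boundedness: the map $t\mapsto h_{L(t)}(v)$ will be shown to be Lipschitz in $t$ with a Lipschitz constant independent of $v$, and the lemma is then exactly the statement that the secant slopes are bounded by that constant.

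First, write down the support function of a member of the shadow system explicitly. Since a set and its convex hull have the same support function, the defining formula $L(t)=\mathrm{conv}\{x+t\beta(x)e:x\in L\}$ gives
$$h_{L(t)}(v)=\sup_{x\in L}\big(\langle x,v\rangle+t\,\beta(x)\langle e,v\rangle\big),\qquad v\in\Sn .$$
For each fixed $v$ this exhibits $h_{L(t)}(v)$ as a supremum of functions of $t$ that are \emph{affine}, with slopes $\beta(x)\langle e,v\rangle$ bounded in absolute value by $M:=\sup_{x\in L}|\beta(x)|$, which is finite because $\beta$ is bounded. (This is, incidentally, the reason $h_{L(t)}(v)$ is convex in $t$; here it gives slightly more.)

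Second, use this representation to bound increments. For any $t,t'\in[-1,1]$ and any $x\in L$,
$$\langle x,v\rangle+t\,\beta(x)\langle e,v\rangle\ \le\ \big(\langle x,v\rangle+t'\,\beta(x)\langle e,v\rangle\big)+M\,|t-t'| ,$$
so taking the supremum over $x\in L$ yields $h_{L(t)}(v)\le h_{L(t')}(v)+M\,|t-t'|$; interchanging $t$ and $t'$ gives $|h_{L(t)}(v)-h_{L(t')}(v)|\le M\,|t-t'|$, with $M$ depending only on $\beta$ (not on $v$, $t$, or $t'$). Dividing by $|t-t_0|$ proves the lemma, with the explicit uniform bound $M$.

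There is essentially no serious obstacle here, but one pitfall is worth flagging. One should resist deducing the bound from the bare assertion ``$h_{L(t)}(v)$ is convex on $[-1,1]$'' by comparing the secant from $t_0$ to $t$ with the secant from $t_0$ to the nearer endpoint of $[-1,1]$: when $t_0$ is close to $\pm1$ the denominator of that comparison secant degenerates, and one also needs a uniform-in-$v$ bound on the values $h_{L(t)}(v)$ themselves. The explicit affine representation sidesteps both issues at once. Alternatively, since $\beta$ is bounded the shadow system is defined, and $h_{L(t)}(v)$ is still convex in $t$, on a neighbourhood of $[-1,1]$; one may then apply the three--slope inequality with the two outer nodes fixed at distance $\geq 1$ from $[-1,1]$ and use that $\bigcup_{|t|\le 2}L(t)$ is a bounded set to bound the resulting secants. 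This gives the same conclusion with a different explicit constant.
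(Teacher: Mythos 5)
Your proof is correct, and it takes a genuinely different route from the paper. The paper's argument extends the shadow system to parameter values beyond $[-1,1]$ (to $t=\pm2$), invokes the known convexity of $t\mapsto h_{L(t)}(v)$, and applies the three-slope inequality with the outer nodes $\pm2$ so that the comparison secants have denominators bounded away from zero and numerators controlled by $\|h_{L(\pm2)}\|_{L^\infty(\Sn)}$ and $\|h_{L(t_0)}\|_{L^\infty(\Sn)}$; your ``alternative'' sketch at the end is essentially this proof, and the pitfall you flag (degeneration of the endpoint secant when $t_0$ is near $\pm1$) is exactly what the extension is there to avoid. Your main argument instead unwinds the Rogers--Shephard definition, writes $h_{L(t)}(v)=\sup_{x\in L}\bigl(\langle x,v\rangle+t\,\beta(x)\langle e,v\rangle\bigr)$ as a supremum of affine functions of $t$ with slopes uniformly bounded by $M=\sup_{x\in L}|\beta(x)|$, and concludes that $h_{L(t)}(v)$ is $M$-Lipschitz in $t$ uniformly in $v$. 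This is more direct (it does not even use convexity in $t$, nor the extension of the system), and it yields the explicit constant $M$; the paper's route buys the result from the already-quoted convexity property without touching the defining function $\beta$. One small point to note: the paper's displayed definition of a shadow system omits the factor $t$ (a typo), and your argument correctly relies on the standard linear-in-$t$ form $L(t)=\mathrm{conv}\{x+t\beta(x)e\}$, which is also what makes the convexity statement the paper cites true; it is worth saying explicitly that this is the definition being used.
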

\begin{proof}
One can (trivially) extend $\{L(t)\}_{t\in[-1,1]}$ to a shadow system $\{L(t)\}_{t\in\R}$. Since, for each $v\in\Sn$, $h_{L(t)}(v)$ is convex in $t$, we can write
\begin{eqnarray*}
-\infty&<&-\frac{\|h_{L(-2)}\|_{L^\infty(\Sn)}+\|h_{L(t_0)}\|_{L^\infty(\Sn)}}{3}\leq \frac{h_{L(-2)}(v)-h_{L(t_0)}(v)}{-2-t_0}\\
&\leq &\frac{h_{L(t)}(v)-h_{L(t_0)}(v)}{t-t_0}\leq \frac{h_{L(2)}(v)-h_{L(t_0)}(v)}{2-t_0}\leq \|h_{L(2)}\|_{L^\infty(\Sn)}+\|h_{L(t_0)}\|_{L^\infty(\Sn)}<\infty,
\end{eqnarray*}
for all $(t,v)\in ([-1,1]\setminus\{t_0\})\times \Sn$.
\end{proof}
The \emph{Steiner-symmetral} $St_e(L)$ of the convex body $L$ with respect to the hyperplane $e^\perp:=\{v\in\Rn:\langle e,v\rangle=0\}$ is the (apparently convex body) set obtained by replacing, for each $\overline{x}\in e^\perp$, the intersection of $L$ with the line that passes through $\overline{x}$ and is parallel to $e$, with the line segment of the same length which is symmetric with respect to the hyperplane $e^\perp$ and passes through $\overline{x}$. 

A particular case of a shadow system can be constructed as follows: The convex body $L$ can be written as 
\begin{equation}\label{eq-form}
L=\{\overline{x}+ye:\overline{x}\in L|e^\perp,\ z(\overline{x})\leq y\leq w(\overline{x})\},
\end{equation}
where the functions $z,w:L|e^\perp\to\R$ are such that $z$ is convex, $w$ is concave and $z\leq w$. Define, then,
\begin{eqnarray*}L_t:&=&\{x-(1-t)u(x|e^\perp)e:x\in St_e(L)\}\\
&=&\{\overline{x}+ye:\overline{x}\in L|e^\perp,\ z(\overline{x})-(1-t)u(\overline{x})\leq y\leq w(\overline{x})-(1-t)u(\overline{x})\},\qquad t\in[-1,1],\end{eqnarray*}where $u:=(z+w)/2$.
One can check that $L_t$ is convex for all $t\in[-1,1]$ and, therefore, the family $\{L_t\}_{t\in[-1,1]}$ is indeed a shadow system along direction $e$. Moreover, $L_1=L$, $L_{-1}$ equals the reflection of $L$ with respect to the hyperplane $e^\perp$ and $L_0=St_e(L)$. Thus, the shadow system $\{L_t\}_{t\in[-1,1]}$ is a way to perform Steiner-symmetrization to $L$ in a continuous way.

We will need some (relatively recent) results concerning the polar volume of a shadow system $\{L(t)\}_{t\in J}$.

\begin{oldtheorem}\label{old-thm-mr1}(Meyer-Reizner \cite{meyer_reisner_2006}) The function $J\ni t\mapsto V((L(t))^*)^{-1}$ is convex. 

When we restrict our attention to the shadow system $\{L_t\}_{t\in[-1,1]}$ that corresponds to Steiner-symmetrization, Theorem \ref{old-thm-mr1} easily implies the following 
\begin{corollary}\label{cor-mr1} (Meyer-Pazor  \cite{meyer_pajor_1990}; Meyer-Reisner \cite{meyer_reisner_2006}) It holds $$V(L^*)^{-1}\geq V((L_0)^*)^{-1}.$$
\end{corollary}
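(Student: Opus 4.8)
The plan is to apply Theorem \ref{old-thm-mr1} to the particular shadow system $\{L_t\}_{t\in[-1,1]}$ that realizes Steiner symmetrization, and then to exploit two features of it: that $0$ is the midpoint of $[-1,1]$, and that the two endpoint bodies $L_1$ and $L_{-1}$ are isometric copies of each other.

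First I would recall the structure of this shadow system recorded just above: $L_1=L$, $L_0=St_e(L)$, and $L_{-1}=RL$, where $R$ denotes the orthogonal reflection of $\Rn$ in the hyperplane $e^\perp$. Each $L_t$ ($t\in[-1,1]$) is a convex body (it has non-empty interior), so its Santal\'o point $s(L_t)$ and the quantity $V((L_t)^*)=V\big((L_t-s(L_t))^\circ\big)$ are well defined and the statement of Theorem \ref{old-thm-mr1} applies on all of $[-1,1]$. Next I would note the (standard) invariance $V((gM+b)^*)=|\det g|^{-1}V(M^*)$ for every $g\in GL_n(\R)$ and $b\in\Rn$: the Santal\'o point is affine covariant, $s(gM+b)=g\,s(M)+b$, and $(gN)^\circ=g^{-T}N^\circ$, whence $V((gN)^\circ)=|\det g|^{-1}V(N^\circ)$; apply this with $N=M-s(M)$. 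Taking $g=R$, for which $|\det R|=1$, this yields $V((L_{-1})^*)=V((RL)^*)=V(L^*)$, so $V((L_{-1})^*)^{-1}=V(L^*)^{-1}$.

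It then only remains to combine these observations. By Theorem \ref{old-thm-mr1} the function $t\mapsto V((L_t)^*)^{-1}$ is convex on $[-1,1]$, so evaluating convexity at the midpoint $0=\frac{1}{2}\cdot 1+\frac{1}{2}\cdot(-1)$ gives
$$V((L_0)^*)^{-1}\le \frac{1}{2}\,V((L_1)^*)^{-1}+\frac{1}{2}\,V((L_{-1})^*)^{-1}=V(L^*)^{-1},$$
which is exactly the asserted inequality.

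There is no real obstacle here: the argument is pure bookkeeping once Theorem \ref{old-thm-mr1} is in hand. The only point deserving an explicit word is the orthogonal-invariance of $M\mapsto V(M^*)$ used to equate the two endpoint values, and even that is routine; one should, however, make sure to point out that every $L_t$ in this family is genuinely a convex body, so that $V((L_t)^*)$ is defined throughout the interval and Theorem \ref{old-thm-mr1} can be invoked without restriction.
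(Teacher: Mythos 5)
Your argument is correct and is precisely the "easy implication" the paper has in mind (the paper gives no separate proof, only the remark that Theorem \ref{old-thm-mr1} easily yields the corollary): convexity of $t\mapsto V((L_t)^*)^{-1}$ evaluated at the midpoint $t=0$, together with $V((L_{-1})^*)=V(L^*)$ since $L_{-1}$ is the reflection of $L=L_1$ in $e^\perp$ and $V(\cdot^*)$ is invariant under such isometries. Nothing further is needed.
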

It should be remarked that the Blaschke-Santal\'{o} inequality follows from Corollary \ref{cor-mr1} in a standard way.

If  $L$ is a centrally symmetric non-ellipsoidal convex body, it was shown in \cite{saint_raymont_1981} and \cite{meyer_pajor_1990} that there exists a direction, such that the Steiner-symmetrization along this direction strictly increases the volume of $L^\circ$. Therefore, in view of Theorem \ref{old-thm-mr1}, this statement implies the following.
\end{oldtheorem}
\begin{lemma}\label{lemma-polar-vol-not-ellipsoid}Let $L$ be a centrally symmetric convex body in $\Rn$ which is not an ellipsoid. Then, there exists an orthogonal map $O:\Rn\to\Rn$ such that
$$\frac{d}{dt}V(((OL)_t)^\circ)\Big|_{t=1^-}<0.$$
\end{lemma}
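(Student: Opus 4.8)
The plan is to combine the known facts quoted just above with the classical Saint-Raymond / Meyer–Pajor characterization of ellipsoids via Steiner symmetrization. First I would recall that since $L$ is centrally symmetric, its Santal\'e point is the origin, so $L^* = L^\circ$ and similarly $V((L_t)^*)^{-1} = V((L_t)^\circ)^{-1}$ for every $t$ in the symmetrization shadow system $\{L_t\}_{t\in[-1,1]}$ (central symmetry of $L$ is preserved by the construction, because $L_0 = St_e(L)$ is centrally symmetric and $L_{-1}$ is the reflection of $L=L_1$). Then I would invoke the cited result of Saint-Raymond \cite{saint_raymont_1981} and Meyer–Pajor \cite{meyer_pajor_1990}: if a centrally symmetric convex body is not an ellipsoid, there is a direction $e'$ such that Steiner symmetrization in that direction \emph{strictly} increases the volume of the polar body. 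Choosing an orthogonal map $O$ with $O^{-1}e' = e$ (equivalently, replacing $L$ by $OL$ and symmetrizing along the fixed direction $e$), we get $V(((OL)_0)^\circ) > V((OL)^\circ)$, i.e. $V(((OL)_0)^*)^{-1} < V((OL)^*)^{-1}$.

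Next I would feed this strict inequality into Theorem 1.3 (Meyer–Reisner): the function $t \mapsto V(((OL)_t)^*)^{-1}$ is convex on $[-1,1]$. By the symmetry $(OL)_{-1} = $ reflection of $(OL)_1 = OL$, the endpoint values at $t = \pm 1$ coincide, and the value at $t = 0$ is strictly smaller than the common endpoint value. A convex function on $[-1,1]$ that is strictly below the (equal) endpoint values at the midpoint must be strictly decreasing as we come into $t = 1$ from the left — more precisely, by convexity the left derivative at $t=1$ is at least the slope of the chord from $t=0$ to $t=1$, which is $2\bigl(V(((OL)_1)^*)^{-1} - V(((OL)_0)^*)^{-1}\bigr) > 0$; hence $\frac{d}{dt}V(((OL)_t)^*)^{-1}\big|_{t=1^-} > 0$. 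Finally, translating back from $V(\cdot)^*{}^{-1}$ to $V((\cdot)^\circ)$ via $V((OL)_t)^* = ((OL)_t)^\circ$ and using that $V(((OL)_1)^\circ) > 0$, the chain rule gives $\frac{d}{dt}V(((OL)_t)^\circ)\big|_{t=1^-} = -V(((OL)_1)^\circ)^2 \cdot \frac{d}{dt}V(((OL)_t)^*)^{-1}\big|_{t=1^-} < 0$, which is the claim.

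I expect the only genuinely delicate points to be bookkeeping ones rather than conceptual: (i) making sure the one-sided derivative at the endpoint $t=1$ actually exists and is finite — this follows from Lemma 1.2 together with the convexity of $t \mapsto h_{(OL)_t}(v)$ and dominated convergence applied to the volume functional, since $V$ of a convex body is a continuous (indeed locally Lipschitz) function of its support function in $L^\infty(\Sn)$; and (ii) checking that the shadow system $\{(OL)_t\}$ really is the Steiner-symmetrization shadow system (so that $t=0$ gives the symmetral and $t=\pm1$ give $OL$ and its reflection), which is exactly the construction recalled before Theorem 1.3. The main obstacle, such as it is, is simply quoting the Saint-Raymond/Meyer–Pajor ellipsoid characterization correctly and in the strict form; everything after that is a short convexity argument plus the chain rule.
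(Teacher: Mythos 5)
Your argument is correct and is essentially the paper's own proof: the paper obtains the lemma in one line by quoting the Saint-Raymond/Meyer--Pajor fact that some Steiner symmetrization strictly increases $V(L^\circ)$ and then invoking the Meyer--Reisner convexity of $t\mapsto V((L(t))^*)^{-1}$, which is exactly your route (equal endpoint values by reflection symmetry, strictly smaller value at $t=0$, chord-slope bound for the left derivative at $t=1$, then pass from $V((\cdot)^*)^{-1}$ back to $V((\cdot)^\circ)$ using that central symmetry makes the Santal\'o point the origin). The only nit is the orientation of the orthogonal map (one wants $Oe'=e$, so that $St_e(OL)=O\,St_{e'}(L)$, rather than $O^{-1}e'=e$), which is immaterial since one may simply take $O$ to be the reflection interchanging $e$ and $e'$.
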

Denote by $(e^\perp)^+$ and $(e^\perp)^-$ the two closed half-spaces defined by the hyperplane $e^\perp$. The following fact (stated in a simplified form; it also follows from the proof of the main result in \cite{campi_gronchi_2006}) is also proved in \cite[Lemma 4]{meyer_reisner_2006}.
\begin{oldtheorem}\label{old-thm-mr2}\cite{meyer_reisner_2006}
The function $J\ni t\mapsto V(L(t)^\circ\cap (e^\perp)^\pm)^{-1}$ is convex. 
\end{oldtheorem}
As a consequence, we have the following lemma.
\begin{lemma}\label{lemma-polar}
Let $L$ be a convex body containing the origin in its interior. If either $b(L^\circ)=o$ or $V(L^\circ\cap (e^\perp)^+)=V(L^\circ\cap (e^\perp)^-)$ then $$V(L^\circ)\leq V((L_t)^\circ),$$for all $t\in[-1,1]$.
\end{lemma}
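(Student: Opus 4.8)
The plan is to split according to the two hypotheses and, in each case, apply one of the cited convexity results for the polar volume of a shadow system (Theorem \ref{old-thm-mr1}, resp.\ Theorem \ref{old-thm-mr2}) to the shadow system $\{L_t\}_{t\in[-1,1]}$ that realizes Steiner-symmetrization, exploiting that its two endpoints $L_1=L$ and $L_{-1}=RL$, where $R$ denotes the reflection in $e^\perp$, carry the same value of the relevant functional. Before that, I would dispose of a trivial case: if $o\notin\inter L_t$ for some $t$, then $L_t^\circ$ is unbounded, so $V(L_t^\circ)=\infty$ and the claimed inequality holds automatically; hence we may assume $o\in\inter L_t$ for every $t\in[-1,1]$, and all polar bodies below are genuine convex bodies.

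Suppose first $b(L^\circ)=o$. Since $s(L)=o$ if and only if $b(L^\circ)=o$, we get $L^*=(L-s(L))^\circ=L^\circ$. Because $R$ is a volume-preserving linear isometry with $R^{-1}=R$, one has $s(RL)=R\,s(L)$ and $(RL-s(RL))^\circ=R(L^*)$, whence $V\big((L_{-1})^*\big)=V(RL^*)=V(L^*)=V(L^\circ)$. By Theorem \ref{old-thm-mr1}, the function $t\mapsto V\big((L_t)^*\big)^{-1}$ is convex on $[-1,1]$; since it equals $V(L^\circ)^{-1}$ at both endpoints $t=\pm1$, it is $\le V(L^\circ)^{-1}$ on all of $[-1,1]$, i.e.\ $V\big((L_t)^*\big)\ge V(L^\circ)$. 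As $V\big((L_t)^\circ\big)\ge V\big((L_t)^*\big)$ by the defining minimality property of the Santal\'{o} point, the conclusion follows in this case.

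Now suppose $V(L^\circ\cap(e^\perp)^+)=V(L^\circ\cap(e^\perp)^-)=:v$. Writing again $L_{-1}=RL$, we have $(L_{-1})^\circ=R(L^\circ)$, and $R$ interchanges $(e^\perp)^+$ and $(e^\perp)^-$, so
\[
V\big((L_{-1})^\circ\cap(e^\perp)^\pm\big)=V\big(L^\circ\cap(e^\perp)^\mp\big)=v .
\]
By Theorem \ref{old-thm-mr2}, each of the functions $t\mapsto V\big((L_t)^\circ\cap(e^\perp)^\pm\big)^{-1}$ is convex on $[-1,1]$ and equals $v^{-1}$ at $t=\pm1$, hence is $\le v^{-1}$ throughout, i.e.\ $V\big((L_t)^\circ\cap(e^\perp)^\pm\big)\ge v$ for all $t\in[-1,1]$. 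Since $e^\perp$ is $\Ha$-null (indeed Lebesgue-null in $\Rn$), $V\big((L_t)^\circ\big)=V\big((L_t)^\circ\cap(e^\perp)^+\big)+V\big((L_t)^\circ\cap(e^\perp)^-\big)\ge 2v=V(L^\circ)$, which finishes the proof.

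I do not expect a genuine obstacle here: the argument is a direct combination of the two quoted theorems with an endpoint symmetry. The only points deserving a line of care are the equivariance of both the polar operation and the Santal\'{o} point under the reflection $R$ (so that the relevant convex functions really do take the same value at $t=1$ and $t=-1$) and the preliminary reduction to the case $o\in\inter L_t$.
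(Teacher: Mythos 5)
Your proof is correct and follows essentially the same route as the paper: the same case split, Theorem~\ref{old-thm-mr1} together with the minimality property of the Santal\'{o} point for the case $b(L^\circ)=o$, and Theorem~\ref{old-thm-mr2} for the equal-halves case. The only (harmless) difference is that in the first case you use the endpoint symmetry $V((L_{-1})^*)=V((L_1)^*)$ to bound the convex function $t\mapsto V((L_t)^*)^{-1}$ on all of $[-1,1]$ at once, whereas the paper compares with $t=0$ via Corollary~\ref{cor-mr1}; both arguments are valid.
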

\begin{proof}
If $b(L^\circ)=o$, then due to the minimality of $V((L_t)^*)$, we have
\begin{eqnarray*}
V((L_t)^\circ)^{-1}\leq V((L_t)^*)^{-1}\leq tV((L_1)^*)^{-1}+(1-t)V((L_0)^*)^{-1}=tV((L_1)^\circ)^{-1}+(1-t)V((L_0)^*)^{-1},
\end{eqnarray*}
where we used Theorem \ref{old-thm-mr1}. Thus, using Corollary \ref{cor-mr1}, we obtain
$$V((L_t)^\circ)^{-1}-V((L_1)^\circ)^{-1}\leq (1-t)(V((L_0)^*)^{-1}-V((L_1)^\circ)^{-1})=(1-t)(V((L_0)^*)^{-1}-V((L_1)^*)^{-1})\leq 0.$$This proves our first assertion. To prove the second one, notice that if $$V(L^\circ\cap (e^\perp)^+)=V(L^\circ\cap (e^\perp)^-),$$ then
$$V((L_1)^\circ\cap (e^\perp)^+)=V((L_1)^\circ\cap (e^\perp)^-)=V((L_{-1})^\circ\cap (e^\perp)^+)=V((L_{-1})^\circ\cap (e^\perp)^-).$$
Since by Theorem \ref{old-thm-mr2}, the function  $[-1,1]\ni t\mapsto V((L_t)^\circ\cap (e^\perp)^\pm)^{-1}$ is convex, it holds
\begin{eqnarray*}V((L_0)^\circ\cap (e^\perp)^\pm)^{-1}&\leq& \frac{1}{2}\left(V((L_1)^\circ\cap (e^\perp)^\pm)^{-1}+V((L_{-1})^\circ\cap (e^\perp)^\pm)^{-1}\right)\\
&=&V((L_1)^\circ\cap (e^\perp)^\pm)^{-1}=V((L_{-1})^\circ\cap (e^\perp)^\pm)^{-1}.\end{eqnarray*}
This shows that the functions $t\mapsto V((L_{t})^\circ\cap (e^\perp)^\pm)^{-1}$ are non-increasing on $[-1,0]$ and non-decreasing on $[0,1]$, hence the function $t\mapsto V((L_t)^\circ)=V((L_{t})^\circ\cap (e^\perp)^+)+V((L_{t})^\circ\cap (e^\perp)^-)$ is non-decreasing on $[-1,0]$ and non-increasing on $[0,1]$. This proves our second claim.
\end{proof}
\section{Some facts concerning the class ${\cal A}(n)$}
\hspace*{1.5em}Below, we collect some general statements on functions from ${\cal A}(n)$ that (although their proofs are simple) will be crucial for the proof of Theorem \ref{thm-main}. 
\begin{lemma}\label{lemma-facts-extension}
Let $0<a<b$ be positive numbers and $G:[a,b]\to (0,\infty)$ be a continuous function, such that if $H$ is an antiderivative of $G$, then the function
$$[a,b]\ni\theta\mapsto\theta G(\theta)+nH(\theta)$$
is strictly increasing. Then, $G$ can be extended to an ${\cal A}(n)$ function $\overline{G}:(0,\infty)\to(0,\infty)$, such that there exists a continuous strictly increasing function $F:[0,\infty)\to\R$, with $F(0)=0$, satisfying
\begin{equation}\label{eq-lemma-facts}\overline{G}(\theta)=\left(\int_0^1r^{n-1}F(r\theta)dr\right)',\qquad\textnormal{for all }\theta\in(0,\infty).\end{equation}
\end{lemma}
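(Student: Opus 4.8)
The plan is to recover $F$ from $G$ by unwinding the integral operator on the right-hand side of (\ref{eq-lemma-facts}). Substituting $s=r\theta$ shows that, for $\theta>0$,
\[
\Phi_F(\theta):=\int_0^1 r^{n-1}F(r\theta)\,dr=\theta^{-n}\int_0^\theta s^{n-1}F(s)\,ds ,
\]
so that whenever $F$ is continuous, $\Phi_F$ is $C^1$ on $(0,\infty)$ and, differentiating the displayed identity, $\theta\Phi_F'(\theta)+n\Phi_F(\theta)=F(\theta)$ for all $\theta>0$. Hence the candidate $\overline{G}:=\Phi_F'$ has $\Phi_F$ as an antiderivative and satisfies $\theta\overline{G}(\theta)+n\Phi_F(\theta)=F(\theta)$; in particular $\overline{G}\in{\cal A}(n)$ as soon as $F$ is strictly increasing. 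Furthermore
\[
\overline{G}(\theta)=\theta^{-n-1}\Big(\theta^n F(\theta)-n\int_0^\theta s^{n-1}F(s)\,ds\Big)=n\theta^{-n-1}\int_0^\theta s^{n-1}\big(F(\theta)-F(s)\big)\,ds ,
\]
which is continuous on $(0,\infty)$ and, when $F$ is strictly increasing, strictly positive there. Thus the lemma reduces to producing a continuous, strictly increasing $F:[0,\infty)\to\R$ with $F(0)=0$ for which $\overline{G}$ restricts to $G$ on $[a,b]$.

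Next I would pin down what ``$\overline{G}=G$ on $[a,b]$'' demands of $F$. Fix an antiderivative $H$ of $G$ (the monotonicity hypothesis is insensitive to the additive constant) and set $p(\theta):=\theta G(\theta)+nH(\theta)$, which is continuous and strictly increasing on $[a,b]$. Since $\overline{G}=\Phi_F'$, asking $\overline{G}=G$ on $[a,b]$ amounts to asking $\Phi_F=H+c$ there for some constant $c$; feeding this into $\theta\Phi_F'+n\Phi_F=F$ forces $F=p+nc$ on $[a,b]$, while the definition of $\Phi_F$ additionally forces the normalization $\int_0^a s^{n-1}F(s)\,ds=a^n(H(a)+c)$. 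Conversely, these two conditions suffice: on $[a,b]$ one has $\frac{d}{ds}\big[s^n(H(s)+c)\big]=s^{n-1}\big(sG(s)+nH(s)+nc\big)=s^{n-1}F(s)$, so integrating from $a$ and using the normalization gives $\int_0^\theta s^{n-1}F(s)\,ds=\theta^n(H(\theta)+c)$, i.e. $\Phi_F=H+c$ on $[a,b]$ and hence $\overline{G}=G$ there.

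Finally I would construct $F$. Choose any constant $c>-H(a)$ and put $A:=p(a)+nc=aG(a)+n(H(a)+c)>0$ and $B:=a^n(H(a)+c)$; then $0<B<Aa^n/n$, the right-hand inequality being precisely $aG(a)>0$. Set $\lambda:=Aa^n/B-n>0$ and define
\[
F(\theta):=\begin{cases} A(\theta/a)^\lambda, & 0\le\theta\le a,\\ p(\theta)+nc, & a\le\theta\le b,\\ p(b)+nc+(\theta-b), & \theta\ge b.\end{cases}
\]
This $F$ is continuous, strictly increasing on each piece with matching endpoint values (hence strictly increasing on $[0,\infty)$), has $F(0)=0$, and satisfies $\int_0^a s^{n-1}F(s)\,ds=Aa^n/(n+\lambda)=B$ by the choice of $\lambda$; so the normalization of the previous paragraph holds, $\overline{G}:=\Phi_F'$ agrees with $G$ on $[a,b]$, and by the first paragraph $\overline{G}$ is continuous, strictly positive and lies in ${\cal A}(n)$, giving the required extension together with the exhibited $F$. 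The main obstacle is really this last step's bookkeeping — isolating the two constraints that ``$\overline{G}=G$ on $[a,b]$'' imposes on $F$ (a pointwise one and an integral normalization), and then observing that the power family $A(\theta/a)^\lambda$ on $[0,a]$ has integral $\int_0^a s^{n-1}A(s/a)^\lambda\,ds$ sweeping out exactly the admissible range $(0,Aa^n/n)$ of $B$ as $\lambda$ runs over $(0,\infty)$, which lets us match the prescribed value.
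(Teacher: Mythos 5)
Your proposal is correct, but it runs the construction in the opposite direction from the paper, so a comparison is worthwhile. Both arguments rest on the same Euler-type identity: writing $\Phi_F(\theta)=\int_0^1 r^{n-1}F(r\theta)\,dr=\theta^{-n}\int_0^\theta s^{n-1}F(s)\,ds$, one has $\theta\Phi_F'(\theta)+n\Phi_F(\theta)=F(\theta)$. The paper first extends the data: it defines an explicit $C^1$, strictly increasing extension $\overline{H}$ of (a shift of) $H$ with $\overline{H}(0)=0$, namely linear with slope $G(a)$ below $a$ and slope $G(b)$ above $b$, sets $\overline{G}:=\overline{H}'$ and $F:=\theta\overline{G}+n\overline{H}$, and then verifies the integral representation a posteriori by noting that $I(\theta):=\int_0^1 r^{n-1}F(r\theta)\,dr$ solves the same first-order ODE as $\overline{H}$, so $I=\overline{H}+c\theta^{-n}$, and the boundary behaviour at $0$ kills the homogeneous term. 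You instead start from the representation, characterize exactly what $\overline{G}=G$ on $[a,b]$ demands of $F$ (the pointwise condition $F=p+nc$ there plus the integral normalization $\int_0^a s^{n-1}F=a^n(H(a)+c)$), and then build $F$ directly, using a power-law piece $A(\theta/a)^\lambda$ on $[0,a]$ whose exponent is tuned to hit the normalization; your verifications (positivity of $\overline{G}=n\theta^{-n-1}\int_0^\theta s^{n-1}(F(\theta)-F(s))\,ds$, the range $(0,Aa^n/n)$ of the power-family integrals, strict inequality from $aG(a)>0$) are all accurate. The paper's route buys simplicity: with the constant extension of $G$ and the normalization $\overline{H}(0)=0$, the integral constraint is satisfied automatically and no parameter needs tuning. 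Your route buys transparency: it isolates the precise degrees of freedom in choosing $F$ and gives $F$ in closed form, at the cost of a (harmless) different extension of $G$ below $a$ (a power function rather than the constant $G(a)$) and a small amount of bookkeeping to match the normalization.
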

\begin{proof}
Set $$\overline{H}(\theta):=\begin{cases}
\theta G(a), & 0\leq\theta<a\\
H(\theta)-H(a)+aG(a), & a\leq\theta\leq b\\
(\theta-b)G(b)+H(b)-H(a)+aG(a), & \theta > b
\end{cases}.$$Then, $\overline{H}$ is $C^1$ and strictly increasing. Set, also, $\overline{G}:=\overline{H}'$.
The assumption on $G$ implies that $\overline{G}|_{(0,\infty)}\in{\cal A}(n)$. Let $F(\theta):=\theta \overline{H}'(\theta)+n\overline{H}(\theta)=\theta \overline{G}(\theta)+n\overline{H}(\theta)$ and 
\begin{equation}\label{eq-lemma-facts-extension}I(\theta):=\int_0^1r^{n-1}F(r\theta)dr=\frac{\int_0^\theta s^{n-1}F(s)ds}{\theta^n},\qquad \theta\in (0,\infty).
\end{equation} Notice that $F(0)=\overline{H}(0)=\lim_{\theta\to 0^+}I(\theta)=0$. Differentiating (\ref{eq-lemma-facts-extension}), we find
$$I'(\theta)=\frac{F(\theta)}{\theta}-n\frac{I(\theta)}{\theta}$$
and, hence,
$$\theta I'(\theta)+nI(\theta)=\theta\overline{H}'(\theta)+n\overline{H}(\theta),\qquad\textnormal{for all }\theta\in (0,\infty).$$This easily implies that there exists a constant $c\in\R$, such that 
$$I(\theta)=\overline{H}(\theta)+c\theta^{-n},\qquad 
\theta\in (0,\infty).$$
Since $\overline{H}(0)=\lim_{\theta\to 0^+}I(\theta)=0$, we conclude that $I=\overline{H}$, hence (\ref{eq-lemma-facts}) follows from (\ref{eq-lemma-facts-extension}).
\end{proof}
\begin{remark}\label{remark-we-may-assume} When studying (\ref{thm-main}), for some function $G\in{\cal A}(n)$ and for some convex body $K$ containing the origin in its interior, we are only interested in the restriction of $G$ onto the interval $[\min_{v\in\Sn}h_K(v),\max_{v\in\Sn}h_K(v)]$. Thus, Lemma \ref{lemma-facts-extension} in particular shows that we may always assume that there exists a continuous strictly increasing function $F:[0,\infty)\to\R$, satisfying $F(0)=0$ and $G(\theta)=\left(\int_0^1r^{n-1}F(r\theta)dr\right)'$ , for all $\theta\in(0,\infty)$. 
\end{remark}
\begin{lemma}\label{lemma-facts-G_varepsilon}
Let $G\in {\cal A}(n)$ and let $0<c_1<c_2$ and $0<a_1<a_2$. If $G(c_1)\geq a_1$ and $G(c_2)\leq a_2$, then for some $\varepsilon_0>0$ and for every $\varepsilon\in(0,\varepsilon_0)$, there exists a function $G_\varepsilon\in {\cal A}(n)$, such that the family $\{G_\varepsilon\}_{\varepsilon\in(0,\varepsilon_0)}$ enjoys the following properties.
\begin{enumerate}[i)]
\item For any $\varepsilon\in(0,\varepsilon_0)$, it holds $G_\varepsilon(\theta)=G(\theta)$, for all $\theta\in[c_1+\varepsilon,c_2+\varepsilon]$, while $G_\varepsilon(c_i)=a_i$, $i=1,2$.
\item The function $G(\varepsilon,\theta):=G_\varepsilon(\theta)$ is bounded on $(0,\varepsilon_0)\times[c_1,c_2]$.
\end{enumerate}
\end{lemma}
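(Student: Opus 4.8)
The plan is to build $G_\varepsilon$ by a small perturbation of $G$ that is localized near the two prescribed nodes $c_1$ and $c_2$, shifted by $\varepsilon$, and to control the ${\cal A}(n)$-property via the antiderivative. First I would recall (Remark 2.12) that, after replacing $G$ by its restriction to a compact interval and using Lemma 2.11, we may assume $G$ is globally of class ${\cal A}(n)$; equivalently, for an antiderivative $H$ the map $\Phi(\theta):=\theta G(\theta)+nH(\theta)$ is strictly increasing on the relevant interval. The ${\cal A}(n)$-membership of a candidate $\widetilde G$ with antiderivative $\widetilde H$ is exactly the statement that $\theta\widetilde G(\theta)+n\widetilde H(\theta)$ is strictly increasing, so the whole game is to engineer a perturbation of $G$ whose induced change in $\Phi$ is small in an appropriate (one-sided-derivative or monotonicity-preserving) sense.

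Concretely, I would proceed as follows. Fix a small $\delta>0$ with $c_1+\delta<c_2-\delta$, choose $\varepsilon_0\in(0,\delta)$, and for $\varepsilon\in(0,\varepsilon_0)$ define $G_\varepsilon$ on $[c_1,c_1+\varepsilon]$ and on $[c_2,c_2+\varepsilon]$ so as to interpolate between the prescribed value $a_i$ at $c_i$ and the value $G(c_i+\varepsilon)$ at $c_i+\varepsilon$ (for instance linearly, or via a smooth monotone bridge), and set $G_\varepsilon(\theta)=G(\theta)$ for $\theta\in[c_1+\varepsilon,c_2+\varepsilon]$. On $(0,c_1)$ and on $(c_2+\varepsilon,\infty)$ one extends following the recipe of Lemma 2.11 (affine continuation of the antiderivative), which automatically keeps us in ${\cal A}(n)$ on those pieces and guarantees positivity. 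Property (i) and the boundedness in (ii) are then immediate from the construction, since on $[c_1,c_2]$ the function $G(\varepsilon,\theta)=G_\varepsilon(\theta)$ takes values trapped between $G$ on $[c_1+\varepsilon_0,c_2]$ and the finitely many interpolation values, all of which lie in a fixed bounded range depending only on $a_1,a_2$ and $\sup_{[c_1,c_2]}G$; positivity holds once $\varepsilon_0$ is small, because $G_\varepsilon$ differs from the positive continuous $G$ only on two short intervals where it is pinned between $a_i>0$ and $G(c_i+\varepsilon)>0$.

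The main obstacle is verifying that $G_\varepsilon\in{\cal A}(n)$, i.e. that $\theta G_\varepsilon(\theta)+nH_\varepsilon(\theta)$ is strictly increasing, where $H_\varepsilon$ is an antiderivative of $G_\varepsilon$. Here I would argue locally: on $(0,c_1)\cup[c_1+\varepsilon,c_2+\varepsilon]\cup(c_2+\varepsilon,\infty)$ the function $G_\varepsilon$ agrees with (an ${\cal A}(n)$-extension of) $G$, so the desired monotonicity is inherited there, up to matching the additive constants in $H_\varepsilon$ correctly across the junction points — which one does by integrating $G_\varepsilon$ from left to right so that $H_\varepsilon$ is continuous, noting that a continuous function which is strictly increasing on each piece of a finite partition is strictly increasing overall. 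On the two short transition intervals $[c_i,c_i+\varepsilon]$ the real work is to choose the interpolation so that $\theta G_\varepsilon(\theta)+nH_\varepsilon(\theta)$ stays strictly increasing: since $H_\varepsilon$ is $C^1$ with $H_\varepsilon'=G_\varepsilon>0$, the derivative of $\theta G_\varepsilon(\theta)+nH_\varepsilon(\theta)$ is (at points of differentiability) $(n+1)G_\varepsilon(\theta)+\theta G_\varepsilon'(\theta)$, so it suffices to pick the bridge so that $G_\varepsilon'$ is bounded below by $-(n+1)G_\varepsilon/\theta\geq -(n+1)(\max a_i)/c_1$ on these intervals; as the required change in the value of $G$ over an interval of length $\varepsilon$ is $O(1)$, one can realize this with $|G_\varepsilon'|=O(1/\varepsilon)$ of a \emph{single sign} chosen to match whether $a_i$ is above or below $G(c_i+\varepsilon)$, and then strict monotonicity of $\Phi_\varepsilon$ is violated only if the bridge decreases too steeply — which is excluded by shrinking $\varepsilon_0$, because on such a short interval $H_\varepsilon$ barely changes and the term $\theta G_\varepsilon(\theta)$ already increases by the full prescribed amount with controlled slope. (If one prefers to avoid the one-sided-derivative bookkeeping altogether, an alternative is to define $H_\varepsilon$ directly as a strictly increasing $C^1$ bridge between the relevant values and set $G_\varepsilon:=H_\varepsilon'$, building monotonicity of $\theta G_\varepsilon+nH_\varepsilon$ into the choice of $H_\varepsilon$ from the start.) Either way, a compactness argument on $[c_1,c_2]$ finishes property (ii), and the lemma follows.
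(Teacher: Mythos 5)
There is a genuine gap, and it sits exactly where you locate "the real work": the transition intervals. Your bridges are pinned at the fixed abscissae $c_i+\varepsilon$, so at $c_1$ you must interpolate from $a_1$ to $G(c_1+\varepsilon)$ — and nothing in the hypotheses gives $G(c_1+\varepsilon)\ge a_1$ (only $G(c_1)\ge a_1$ is assumed). When $G(c_1)=a_1$ and $G$ dips just to the right of $c_1$, your bridge must \emph{decrease}, and your claim that this case is "excluded by shrinking $\varepsilon_0$" is not justified: the definition of ${\cal A}(n)$ says only that $\theta G(\theta)+nH(\theta)$ is \emph{strictly} increasing, with no quantitative margin, so $G$ may decrease at essentially the critical rate $-(n+1)G(\theta)/\theta$ near $c_1$. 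Worse, if $G$ bulges above $a_1$ inside $[c_1,c_1+\varepsilon]$ before dropping, the monotonicity of $\Phi$ for $G$ is financed by $n\int G$ over the bulge; any \emph{monotone} bridge $G_\varepsilon\le a_1$ loses that contribution, and one checks that $\bigl[(c_1+\varepsilon)G(c_1+\varepsilon)+n\int_{c_1}^{c_1+\varepsilon}G_\varepsilon\bigr]-c_1a_1$ can then be strictly negative, i.e.\ $\theta G_\varepsilon(\theta)+nH_\varepsilon(\theta)$ fails to increase even endpoint-to-endpoint across the bridge. Such bulges can be arranged at a sequence of scales $\varepsilon_k\to 0$ while keeping $G$ continuous and in ${\cal A}(n)$, so no choice of $\varepsilon_0$ rescues the construction for \emph{all} $\varepsilon\in(0,\varepsilon_0)$. (Your parenthetical alternative — prescribe $H_\varepsilon$ and set $G_\varepsilon:=H_\varepsilon'$ — runs into the same obstruction, since the endpoint values of $G_\varepsilon$ and the value of $\int G_\varepsilon$ are what matter.) A separate, smaller issue: you require both $G_\varepsilon=G$ on $[c_1+\varepsilon,c_2+\varepsilon]$ and a bridge on $[c_2,c_2+\varepsilon]$, which conflict on their overlap; the interval in item (i) should be read as $[c_1+\varepsilon,c_2-\varepsilon]$, which is what the actual construction delivers.

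The paper's proof avoids all slope estimates by choosing the matching points \emph{adaptively}: set $c_1':=c_1$ if $G(c_1)=a_1$, otherwise pick $c_1'\in(c_1,c_1+\varepsilon)$ with $G(c_1')\ge a_1$ (possible by continuity, since $G(c_1)>a_1$), and symmetrically $c_2'\in(c_2-\varepsilon,c_2]$ with $G(c_2')\le a_2$. The linear bridges on $[c_1,c_1']$ (from $a_1$ up to $G(c_1')$) and on $[c_2',c_2]$ (from $G(c_2')$ up to $a_2$) are then \emph{non-decreasing} and positive, so on these pieces $\theta G_\varepsilon(\theta)$ is non-decreasing and $nH_\varepsilon(\theta)$ is strictly increasing, whence $\theta G_\varepsilon(\theta)+nH_\varepsilon(\theta)$ is strictly increasing with no further argument; on $[c_1',c_2']$ it coincides (up to the additive constant in $H_\varepsilon$) with the corresponding expression for $G$, and continuity glues the three pieces. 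One then extends off $[c_1,c_2]$ by Lemma \ref{lemma-facts-extension}, and boundedness follows from $G_\varepsilon\le\max\{a_2,\max_{[c_1,c_2]}G\}$. This adaptive choice of $c_i'$, which forces the bridges to be monotone and makes the ${\cal A}(n)$ verification trivial, is the idea your proposal is missing.
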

\begin{proof}
Let $\varepsilon_0>0$ be any number, such that $c_1+\varepsilon_0<c_2-\varepsilon_0$ and let $\varepsilon\in(0,\varepsilon_0)$. If $G(c_1)=a_1$ (resp. $G(c_2)=a_2$) set $c_1':=c_1$ (resp. $c_2':=c_2$), while if $G(c_1)>a_1$ (resp. $G(c_2)<a_2$), fix $c_1'$ to be any real number in $(c_1,c_1+\varepsilon)$ (resp. fix $c_2'$ in $(c_2-\varepsilon,c_2)$), such that $G(c_1')\geq a_1$ (resp. $G(c_2')\leq a_2$). 
Define $G_\varepsilon:[c_1,c_2]\to(0,\infty)$ by
$$G_\varepsilon(\theta)=\begin{cases}
a_1+(G(c'_1)-a_1)\frac{\theta-c_1}{c_1'-c_1},& c_1\leq \theta\leq c'_1\\
G(\theta),& c'_1< \theta< c'_2\\
G(c'_2)+(a_2-G(c'_2))\frac{\theta-c'_2}{c_2-c'_2}, & c'_2\leq \theta\leq c_2
\end{cases}$$
and set $H_\varepsilon(\theta):=\int_0^\theta G_\varepsilon(r)dr$, $\theta>0$. Then, $G_\varepsilon$ is continuous and coincides with $G$  on $[c_1+\varepsilon,c_2-\varepsilon]$. Moreover, $\theta G_\varepsilon(\theta)+nH_\varepsilon(\theta)$ is strictly increasing on $[c_1,c'_1]$ and $[c'_2,c_2]$. Furthermore, $H_\varepsilon|_{[c'_1,c'_2]}$ is an antiderivative of $G_\varepsilon|_{[c'_1,c'_2]}=G|_{[c'_1.c'_2]}$, so the function $\theta G_\varepsilon(\theta)+nH_\varepsilon(\theta)$ is also strictly increasing on $[c'_1,c'_2]$. 
The continuity of $\theta G_\varepsilon(\theta)+nH_\varepsilon(\theta)$ shows that $\theta G_\varepsilon(\theta)+nH_\varepsilon(\theta)$ is strictly increasing on $[c_1,c_2]$. Then, by Lemma \ref{lemma-facts-extension}, one can extend $G_\varepsilon$ to a function from ${\cal A}(n)$. Finally, notice that $G_\varepsilon(\theta)\le \max\{a_2,\max_{\theta\in[c_1,c_2]}G(\theta)\}$, for all $\theta\in[c_1,c_2]$.
\end{proof}

\section{Differentiability properties of Steiner-symmetrization}
We start this section with the following simple lemma.
\begin{lemma}\label{lemma-differentiability-2}
Let $L$ be a convex body in $\Rn$ that contains the origin in its interior. For $x\in \bd L$, set $N(L,x)$ to be the set of outer unit normal vectors to hyperplanes that support $L$ at $x$. Set, also,
$$m(L):=\inf\left\{\frac{|\langle v,x\rangle|}{|x|}:x\in \bd L_t,\ v\in N(L_t,x), \ t\in[-1,1]\right\}.$$
Then, it holds $m(L)>0$.
\end{lemma}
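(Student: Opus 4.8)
The plan is to show that the quantity $m(L)$, which measures how "transversal" supporting hyperplanes are to the position vector, stays uniformly bounded away from zero over the entire Steiner-symmetrization family $\{L_t\}_{t\in[-1,1]}$. The key geometric observation is that $o \in \inter L_t$ for every $t$, with a uniform margin: since $o \in \inter L$ and $o \in \inter L_0 = St_e(L)$, and each $L_t$ is an average (in the shadow-system sense) of $L = L_1$ and $L_{-1}$, there is some $r>0$ with $rB_2^n \subseteq L_t$ for all $t \in [-1,1]$. Indeed, writing $L_t$ in the coordinates of \eqref{eq-form}, the slab structure shows $L_t \supseteq r B_2^n$ as soon as $rB_2^n \subseteq L_{-1} \cap L_1$ (one checks the vertical fibers of $L_t$ over $\overline x \in L|e^\perp$ contain the corresponding fiber of the ball whenever both $L_1$ and $L_{-1}$ do, by convexity of $z$ and concavity of $w$). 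Likewise $L_t \subseteq R B_2^n$ for a uniform $R$, since $h_{L_t}(v)$ is convex in $t$ hence bounded by $\max\{h_{L_1}(v), h_{L_{-1}}(v)\} \le R$.

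First I would fix such an $r$ and $R$. Then for any $t \in [-1,1]$, any $x \in \bd L_t$, and any $v \in N(L_t, x)$ (so $|v|=1$ and $\langle v, y\rangle \le \langle v, x\rangle$ for all $y \in L_t$), the inclusion $rB_2^n \subseteq L_t$ gives $\langle v, x \rangle \ge \sup_{y \in rB_2^n}\langle v,y\rangle = r > 0$; in particular $\langle v, x\rangle > 0$, so the absolute value is harmless. On the other hand $|x| \le R$ since $x \in \bd L_t \subseteq RB_2^n$. Therefore
\[
\frac{|\langle v, x\rangle|}{|x|} = \frac{\langle v,x\rangle}{|x|} \ge \frac{r}{R} > 0
\]
for every admissible triple $(x,v,t)$, and taking the infimum yields $m(L) \ge r/R > 0$.

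The only point requiring genuine care is the uniform lower bound $rB_2^n \subseteq L_t$; everything else is immediate from the definition of the supporting normal cone and the support-function convexity already recorded before Lemma \ref{lemma-differentiability-1}. For the uniform inclusion, the cleanest argument is: $L_t = t L_1 + (1-t)L_0$ is false as sets, but the shadow-system identity $h_{L_t}(v) \le \max\{h_{L_1}(v), h_{L_{-1}}(v)\}$ handles the outer radius, and for the inner radius one observes directly from the fiber description that the fiber of $L_t$ over $\overline x$ is the fiber of $L$ shifted by $-(1-t)u(\overline x)e$ applied to the Steiner-symmetrized body, whose fibers all contain a symmetric interval of half-length $\rho := \min\{\text{dist}(o,\bd L), \text{dist}(o, \bd L_0)\}/2 > 0$ around their midpoint; combined with the uniform bound on $|u(\overline x)|$ (bounded since $L$ is bounded) and shrinking $\overline x$ to a small ball in $e^\perp$, one extracts a fixed $r>0$ with $rB_2^n \subseteq L_t$ for all $t$. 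This is the main obstacle, but it is routine once the fiber picture from \eqref{eq-form} is written out.
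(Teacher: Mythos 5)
Your proof is correct, but it takes a more quantitative route than the paper. The paper's argument is soft: it checks that $o\in\inter L_t$ for each $t$, deduces that $x$ and $v$ can never be orthogonal (else the supporting hyperplane at $x$ would contain the segment $[o,x]$ and hence the origin would not be interior), so the ratio is pointwise positive, and then appeals to a continuity/compactness argument over the compact family of triples $(t,x,v)$ to get a uniform bound. You instead produce an explicit bound $m(L)\ge r/R$ by establishing a \emph{uniform} inball and outball for the whole family: $rB_2^n\subseteq L_t\subseteq RB_2^n$ for all $t\in[-1,1]$. Your key fiber computation is sound and can be stated cleanly as an identity: the fiber of $L_t$ over $\overline{x}$ is $\bigl[\tfrac{1+t}{2}z-\tfrac{1-t}{2}w,\ \tfrac{1+t}{2}w-\tfrac{1-t}{2}z\bigr]$, i.e.\ the interval-wise convex combination of the fibers of $L_1$ and $L_{-1}$; since the fiber of $rB_2^n$ is a symmetric interval contained in both, it is contained in the combination, which gives $rB_2^n\subseteq L_t$ whenever $rB_2^n\subseteq L_1\cap L_{-1}=L\cap(\textnormal{reflection of }L)$ (automatic for small $r$ by symmetry of the ball). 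The outer bound follows from convexity of $t\mapsto h_{L_t}(v)$ exactly as you say, and the inequality $\langle v,x\rangle=h_{L_t}(v)\ge h_{rB_2^n}(v)=r$ together with $|x|\le R$ finishes the argument. Your approach buys an explicit constant and avoids the (slightly delicate) compactness argument over normal cones; its only cost is the fiber bookkeeping, and your second paragraph re-derives the inball in a more roundabout way than your first paragraph already achieves --- the interpolation identity above is all that is needed there.
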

\begin{proof}
For $t\in[-1,1]$, one can check that $L_t$ contains the origin in its interior. Let $t\in [-1,1]$, $x\in \bd L_t$ and $v\in N(L_t,x)$. Then, $x$ cannot be orthogonal to $v$; otherwise, the supporting hyperplane $H$ of $L_t$, whose outer unit normal vector is $v$, would be parallel to the line segment $[o,x]$ and, therefore, $[o,x]$ would be contained in $H$. This would show that the origin is not contained in the interior of $L_t$, a contradiction. Consequently, $|\langle v,x\rangle|/|x|>0$. A continuity/compactness argument easily yields our claim. 
\end{proof}
As a consequence, we have the following.
\begin{proposition}\label{proposition-differentiability-1}
Let $L$ be a convex body in $\Rn$ that contains the origin in its interior and let $t_0\in[-1,1]$. Then, the function
$$([-1,1]\setminus\{t_0\})\times \Sn\ni (t,v)\mapsto \frac{\rho_{L_t}(v)-\rho_{L_{t_0}}(v)}{t-t_0}$$
is uniformly bounded.
\end{proposition}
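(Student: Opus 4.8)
The plan is to transfer the uniform boundedness of difference quotients of support functions (Lemma \ref{lemma-differentiability-1}) to difference quotients of radial functions via the polarity relation $\rho_{L_t}(v) = 1/h_{(L_t)^\circ}(v)$. The first step, however, is to observe that $\{(L_t)^\circ\}$ need not be a shadow system, so Lemma \ref{lemma-differentiability-1} does not apply directly to it. Instead I would argue directly on the bodies $L_t$ themselves: for a fixed $v \in \Sn$, the point $\rho_{L_t}(v)\,v$ lies on $\bd L_t$, and the key geometric input is Lemma \ref{lemma-differentiability-2}, which guarantees that the supporting hyperplanes of $L_t$ at boundary points are uniformly ``transverse'' to the radial directions, uniformly in $t \in [-1,1]$ and in the boundary point.

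The main estimate I would carry out is the following. Fix $v \in \Sn$ and $t, t_0 \in [-1,1]$ with $t \neq t_0$. Write $r = \rho_{L_t}(v)$ and $r_0 = \rho_{L_{t_0}}(v)$, so $x = rv \in \bd L_t$ and $x_0 = r_0 v \in \bd L_{t_0}$. Pick an outer unit normal $w \in N(L_t, x)$. Then $h_{L_t}(w) = \langle w, x \rangle = r\langle w, v\rangle$, while by the supporting hyperplane inequality $h_{L_{t_0}}(w) \geq \langle w, x_0 \rangle = r_0 \langle w, v\rangle$. Subtracting gives
$$h_{L_t}(w) - h_{L_{t_0}}(w) \leq (r - r_0)\langle w, v\rangle.$$
By symmetry (swapping the roles of $t$ and $t_0$ and using a normal at $x_0$), one obtains a matching lower bound, so that $|r - r_0|$ is controlled by $|h_{L_t}(w') - h_{L_{t_0}}(w')|$ divided by a quantity of the form $|\langle w', v\rangle|$, for a suitable unit vector $w'$ depending on $t, t_0, v$. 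Now $|\langle w', v\rangle| = |\langle w', x\rangle|/|x| \geq m(L) > 0$ by Lemma \ref{lemma-differentiability-2} (the relevant boundary point being a radial point, so $|x| = \rho_{L_t}(v) \leq \max_s \operatorname{diam} L_s < \infty$ — actually one just needs $|\langle w',v\rangle|\ge m(L)$ directly from the definition of $m(L)$). Therefore
$$\left| \frac{\rho_{L_t}(v) - \rho_{L_{t_0}}(v)}{t - t_0} \right| \leq \frac{1}{m(L)} \cdot \sup_{w \in \Sn} \left| \frac{h_{L_t}(w) - h_{L_{t_0}}(w)}{t - t_0} \right|,$$
and the right-hand side is uniformly bounded on $([-1,1]\setminus\{t_0\}) \times \Sn$ by Lemma \ref{lemma-differentiability-1} (with $L$ replaced by the underlying convex body and the shadow system $\{L_t\}$ in the notation of Section 2). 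Since $m(L)$ is independent of $t$, $t_0$ and $v$, this gives the claimed uniform bound.

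The step I expect to be the only real subtlety is the two-sided nature of the estimate: the supporting-hyperplane inequality only yields one inequality at a time, so one must carefully use a normal vector at $x \in \bd L_t$ for one direction of the bound and a normal vector at $x_0 \in \bd L_{t_0}$ for the other, and then take the worse of the two denominators — both of which are still $\geq m(L)$. One should also double-check that $t_0$ is allowed to be an endpoint of $[-1,1]$ and that Lemma \ref{lemma-differentiability-2} and Lemma \ref{lemma-differentiability-1} are both stated for all $t, t_0 \in [-1,1]$, which they are. Everything else is a routine combination of the two cited lemmas with the polarity/radial bookkeeping, so the proof is short.
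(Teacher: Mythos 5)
Your proposal is correct and follows essentially the same route as the paper: bound the radial difference via the support difference at a suitable outer normal of the radial boundary point, use Lemma \ref{lemma-differentiability-2} to keep the denominator $|\langle w,v\rangle|\ge m(L)>0$, and invoke Lemma \ref{lemma-differentiability-1} for the support-function difference quotients. Your explicit handling of the two-sided estimate (taking the normal at the body with the smaller radial value for the upper bound, and at the other body for the reverse case) is exactly the care this step requires, and if anything is stated more precisely than in the paper's own write-up, which records only one of the two choices of normal.
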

\begin{proof}
Fix $v\in \Sn$. Let $t_1,t_2\in[-1,1]$ with $t_1\neq t_2$ and assume for instance that $\rho_{L_{t_1}}(v)>\rho_{L_{t_2}}(v)$. Set $x:=\rho_{L_{t_1}}(v)v$ and $y:=\rho_{L_{t_2}}(v)v$. Clearly, $x\in \bd L_{t_1}$ and $y\in\bd L_{t_2}$. Choose $v'\in N(L_{t_1},x)$. Then, we have
$$h_{L_{t_1}}(v')=\langle v',x\rangle=\frac{\langle v',x\rangle}{|x|}\rho_{L_{t_1}}(v),$$
while
$$h_{L_{t_2}}(v')\geq \langle v',y\rangle=\langle v',v\rangle\rho_{L_{t_2}}(v)=\frac{\langle v',x\rangle}{|x|}\rho_{L_{t_2}}(v).$$
Consequently, it holds
\begin{equation}\label{eq-proposition-differentiability-1}
\left|\frac{\rho_{L_{t_1}}(v)-\rho_{L_{t_2}}(v)}{t_1-t_2}\right|\leq \left(\frac{|\langle v',x\rangle|}{|x|}\right)^{-1}\left|\frac{h_{L_{t_1}}(v')-h_{L_{t_2}}(v')}{t_1-t_2}\right|.
\end{equation}
Our claim follows immediately from Lemma \ref{lemma-differentiability-1} and Lemma \ref{lemma-differentiability-2}, after setting $t_1=t$ and $t_2=t_0$ in (\ref{eq-proposition-differentiability-1}).
\end{proof}
In the next section, we will need that $\rho_{L_t}(v)$ is differentiable from the left at $t=1$. This would follow immediately if we knew that $\rho_{L_t}(v)$ posses some concavity property (for instance, it has been mentioned that $h_{L_t}$ is convex in $t$). Unfortunately, since we are not aware of any such property, we need to work a little bit harder in order to establish the desired differentiability.     
\begin{proposition}\label{prop-differentiability-2}
Let $L$ be a convex body in $\Rn$, containing the origin in its interior. Then, for all $v\in\Sn$ and for all $t_0\in(-1,1]$, $\rho_{L_t}(v)$ is differentiable from the left at $t=t_0$.
\end{proposition}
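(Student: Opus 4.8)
The plan is to strip off the one direction where everything is transparent, reduce the problem to a purely one-dimensional statement about the first zero of a concave function, and then prove that statement by a direct implicit-function-style limiting argument.

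First I would fix $v\in\Sn$ and dispose of $v=\pm e$ at once: from the description $(\ref{eq-form})$ of the shadow system $\{L_t\}$ one computes that $\rho_{L_t}(\pm e)$ is affine in $t$. Assume henceforth $\overline v:=v|e^\perp\neq 0$ and set $\Lambda:=\sup\{\lambda\ge 0:\lambda\overline v\in L|e^\perp\}\in(0,\infty)$, a quantity independent of $t$. The fibre of $L_t$ over $\overline x\in L|e^\perp$ is $[\,z(\overline x)-(1-t)u(\overline x),\,w(\overline x)-(1-t)u(\overline x)\,]$, and since $z(\overline x)-(1-t)u(\overline x)=\tfrac{1+t}{2}z(\overline x)-\tfrac{1-t}{2}w(\overline x)$ (and symmetrically for the upper endpoint), for $t\in[-1,1]$ the two endpoints are, respectively, convex and concave in $\overline x$ and affine in $t$. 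Hence, for $\lambda\in[0,\Lambda]$, $\lambda v\in L_t$ holds precisely when $A_t(\lambda)\ge 0$ and $B_t(\lambda)\ge 0$, where
$$A_t(\lambda):=\lambda\langle v,e\rangle-z(\lambda\overline v)+(1-t)u(\lambda\overline v),\qquad B_t(\lambda):=w(\lambda\overline v)-(1-t)u(\lambda\overline v)-\lambda\langle v,e\rangle .$$
I would record that each of $A_t,B_t$ is continuous on $[-1,1]\times[0,\Lambda]$ (continuity of $z,w$ up to $\lambda=\Lambda$ uses that $L$ is a body), concave in $\lambda$, affine in $t$, and strictly positive at $\lambda=0$ uniformly in $t$ (since $o\in\inter L_t$ for all $t\in[-1,1]$). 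Consequently $\{\lambda\in[0,\Lambda]:A_t(\lambda)\ge 0\}=[0,\alpha_t]$ and $\{\lambda\in[0,\Lambda]:B_t(\lambda)\ge 0\}=[0,\beta_t]$, whence $\rho_{L_t}(v)=\min(\alpha_t,\beta_t)$. Because a pointwise minimum of two functions that are left-differentiable at $t_0$ is again left-differentiable at $t_0$, it now suffices to prove: if $\Phi:[-1,1]\times[0,\Lambda]\to\R$ is continuous, concave in the second variable and affine in the first, with $\Phi(t,0)>0$ for all $t$, then $t\mapsto\gamma_t:=\sup\{\lambda\in[0,\Lambda]:\Phi(t,\lambda)\ge 0\}$ is left-differentiable at each $t_0\in(-1,1]$.

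To prove this I would proceed as follows. Note $\gamma_{t_0}\in(0,\Lambda]$. If $\Phi(t_0,\gamma_{t_0})>0$, then $\gamma_{t_0}=\Lambda$ and, by continuity, $\gamma_t\equiv\Lambda$ near $t_0$; there is nothing to prove. So assume $\Phi(t_0,\gamma_{t_0})=0$; a short continuity/concavity argument yields $\gamma_t\to\gamma_{t_0}$ as $t\to t_0$. Let $q$ be the $t$-derivative of $\Phi(\cdot,\gamma_{t_0})$ (a genuine derivative, as $\Phi$ is affine in $t$), let $\ell$ be the left $\lambda$-derivative of $\Phi(t_0,\cdot)$ at $\gamma_{t_0}$, and, when $\gamma_{t_0}<\Lambda$, let $r$ be the right $\lambda$-derivative there; concavity together with $\Phi(t_0,0)>0=\Phi(t_0,\gamma_{t_0})$ forces $r\le\ell<0$, and $\lambda\mapsto\partial_t\Phi(\cdot,\lambda)$ is continuous (since $\Phi$ is). Since $\Phi$ is affine in $t$, for $t<t_0$ one has $\Phi(t,\gamma_{t_0})=(t-t_0)q$; hence $\gamma_t\le\gamma_{t_0}$ when $q\ge0$ and $\gamma_t\ge\gamma_{t_0}$ when $q\le0$, and in the cases that require an argument $\gamma_t<\Lambda$ for $t$ slightly below $t_0$ (the remaining cases pin $\gamma_t\equiv\Lambda$), so that $\Phi(t,\gamma_t)=0$. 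Expanding the identity $0=\Phi(t,\gamma_t)=\Phi(t_0,\gamma_t)+(t-t_0)\,\partial_t\Phi(\cdot,\gamma_t)$ and dividing by $\gamma_t-\gamma_{t_0}$, I let $t\uparrow t_0$: the left-hand side tends to $\ell$ if $q>0$ (then $\gamma_t\uparrow\gamma_{t_0}$) and to $r$ if $q<0$ (then $\gamma_t\downarrow\gamma_{t_0}$), while $\partial_t\Phi(\cdot,\gamma_t)\to q$; reading off the limit gives $\tfrac{\gamma_t-\gamma_{t_0}}{t-t_0}\to -q/\ell$, respectively $-q/r$. If $q=0$, the same identity, now with $\partial_t\Phi(\cdot,\gamma_t)\to 0$, gives $\Phi(t_0,\gamma_t)-\Phi(t_0,\gamma_{t_0})=o(|t-t_0|)$, hence $\gamma_t-\gamma_{t_0}=o(|t-t_0|)$ and the left derivative is $0$.

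The routine parts are the explicit shape of $L_t$ and the (easy) left-differentiability of a minimum. The main obstacle will be the bookkeeping at the endpoint $\lambda=\Lambda$: I must verify that $z,w$, and hence $A_t,B_t$, are continuous up to $\lambda=\Lambda$, that the ``exit through the side'' case $\rho_{L_t}(v)=\Lambda$ is correctly captured by $\alpha_t=\beta_t=\Lambda$, and, in the reduced statement, that for $t$ slightly below $t_0$ the value $\gamma_t$ either remains equal to $\Lambda$ or is pushed strictly into $(0,\Lambda)$, so that $\Phi(t,\gamma_t)=0$ genuinely holds. A second delicate point is that $\Phi(t_0,\cdot)$ may have a corner at $\gamma_{t_0}$; this is precisely why both one-sided $\lambda$-derivatives $\ell$ and $r$ must appear and why the sign of $q$ has to decide which of them governs the limit. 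Finally, although Proposition~\ref{proposition-differentiability-1} is not logically needed above, it already supplies — via local Lipschitzness — the continuity of $t\mapsto\rho_{L_t}(v)$ that the argument tacitly uses.
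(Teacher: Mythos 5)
Your argument is correct, but it takes a genuinely different route from the paper. The paper reduces to $n=2$, tracks the contact point $(x_t,y_t)$ of the ray with $\bd L_t$, proves a trichotomy (the point stays on the upper graph, on the lower graph, or has constant abscissa), and then differentiates the identity $F(t,x_t)=v_2/v_1$ with $F(t,x)=(w(x)-(1-t)u(x))/x$, invoking Proposition \ref{proposition-differentiability-1} to rule out a vanishing denominator and a separate monotonicity argument at the endpoint $x_{t_0}=b$. You instead parametrize along the ray itself, writing $\rho_{L_t}(v)=\min(\alpha_t,\beta_t)$ where $\alpha_t,\beta_t$ are the first zeros of two functions that are concave in $\lambda$, affine in $t$, and positive at $\lambda=0$, and you prove left-differentiability of such a ``first zero'' by an implicit-function-style limit. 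What this buys: no reduction to the plane, no contact-point trichotomy, and no appeal to Proposition \ref{proposition-differentiability-1} --- the non-degeneracy that the paper extracts from boundedness of difference quotients comes for free in your setup, since a concave function of $\lambda$ that is positive at $0$ and vanishes at $\gamma_{t_0}$ automatically has one-sided slope at most $-\Phi(t_0,0)/\gamma_{t_0}<0$ there (this is exactly where $o\in\inter L_t$ enters). Two loose ends you flagged do close, but deserve the explicit remark: joint continuity of $A_t,B_t$ up to $\lambda=\Lambda$ holds because $w$ (resp.\ $z$) restricted to the segment $[0,\Lambda\bar v]$ is concave (resp.\ convex) along it and upper (resp.\ lower) semicontinuous by closedness of $L$, and these two facts force continuity at the endpoint; and when $\gamma_{t_0}=\Lambda$ the left slope $\ell$ may equal $-\infty$ (right endpoint of the domain), a case your write-up implicitly treats as finite --- the same divided identity then forces $(\gamma_t-\gamma_{t_0})/(t-t_0)\to 0$, so the left derivative exists and equals $0$; this mirrors the paper's parenthetical that its argument survives $w_-'(b)=-\infty$. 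With those one-line patches, and the elementary facts that the minimum of two left-differentiable functions is left-differentiable and that $\gamma_t\to\gamma_{t_0}$ (your continuity/concavity argument is sound), the proof is complete, and it even yields explicit values $-q/\ell$, $-q/r$, or $0$ for the one-sided derivative, comparable to the paper's formula in terms of $w_-'$.
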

\begin{proof}
It is clearly enough to assume that $n=2$ (just take intersections of $L$ with 2-dimensional subspaces containing $e$). After a possible rotation of the coordinate system, $L_t$ can be written as 
$$L_t=\left\{(x,y)\in\R^2: x\in[a,b],\ z(x)-(1-t)\frac{w(x)+z(x)}{2}\le y\le w(x)-(1-t)\frac{w(x)+z(x)}{2}\right\},$$for some $a<0<b$ and some functions $w,\ z:[a,b]\to\R$, such that $z\leq  w$ and $z,-w$ are convex. Set $u:=(w_L+z_L)/2$ and fix $v=(v_1,v_2)\in\mathbb{S}^1$, $t_0\in[-1,1]$. For $t\in[-1,1]$, set also $x_t:=\rho_{L_t}(v)v_1$ and $y_t:=\rho_{L_t}(v)v_2$. If $v_1=0$, it is clear that $d\rho_{L_{t_0}}(v)/dt|_{t=t_0}=\pm u(0)$. We may, therefore, assume that $v_1>0$ (the case $v_1<0$ follows from the case $v_1>0$ by considering the reflections of $L$ and $v$ with respect to the $y$-axis). 
Notice, then, that $x_t\in(0,b]$ and that $(x_t,y_t)=\rho_{L_t}(v)$ is continuous in $t$. 

\emph{Claim 1.} Let $t_0\in(-1,1]$. There exists $\delta>0$, such that at precisely one of the following holds.
\begin{enumerate}[i)]
\item $y_t=w(x_t)-(1-t)u(x_t)$ and $x_t\neq x_{t_0}$, for all $t\in [t_0-\delta,t_0)$.
\item $y_t=z(x_t)-(1-t)u(x_t)$ and $x_t\neq x_{t_0}$, for all $t\in [t_0-\delta,t_0)$.
\item $x_t=x_{t_0}$, for all $t\in [t_0-\delta,t_0)$.
\end{enumerate}

To prove Claim 1, we will first show that either $x_t\neq x_{t_0}$ in a left neighbourhood of $t=t_0$ or $x_t=x_{t_0}$ in a left neighbourhood of $t=t_0$. If $z(x_{t_0})-(1-t_0)u(x_{t_0})<y_{t_0}<w(x_{t_0})-(1-t_0)u(x_{t_0})$ or $u(x_{t_0})=0$, then it is clear that $x_{t_0}=b$ and the point $(b,y_{t_0})$ remains a boundary point of $L_t$, for all $t$ in a neighbourhood of $t_0$. In particular, we fall in case (iii). We may, therefore, assume that $u(x_{t_0})\neq 0$ and either it holds $y_{t_0}=w(x_{t_0})-(1-t_0)u(x_{t_0})$  or it holds $y_{t_0}=z(x_{t_0})-(1-t_0)u(x_{t_0})$. For $t\in[-1,1]$, consider the line segment $I_t$ (which degenerates to a point if $x_{t_0}=b$ and $z(b)=w(b)$), defined as the intersection of $L_t$ with the line which is parallel to the $x_2$-axis and passes through the point $(x_{t_0},0)$. Then, $(x_{t_0},y_{t_0})$ is an end-point of $I_{t_0}$ and $I_t=I_{t_0}+(t-t_0)u(x_{t_0})(0,1)$, $t\in[-1,1]$. Thus, (depending on the sign of $u(x_{t_0})$) either $I_t$ does not contain $(x_{t_0},y_{t_0})$, for any $t\in[-1,t_0)$ (this is always true if $I_{t_0}$ is a singleton) or $(x_{t_0},y_{t_0})$ is an interior point of $I_t$, for $t$ lying in some interval of the form $[t_0-\delta,t_0)$. In the first case, we clearly have $x_t<x_{t_0}$, for all $t\in[-1,t_0)$. Let us consider the second case. If $x_{t_0}<b$, then $x_t$ is an interior point of $L_t$, for all $t\in[t_0-\delta,t_0)$. In particular, $x_t>x_{t_0}$, for all $t\in[t_0-\delta,t_0)$. If $x_{t_0}=b$, then the point $(b,y_{t_0})$ remains a boundary point of $L_t$ for $t$ close to $t_0$. Consequently, $x_t=x_{t_0}=b$, for all $t\in[t_0-\delta,t_0)$. 

To prove the remaining assertions of our claim, observe that since $(x_t,y_t)$ is continuous in $t$ and since $z(x)-(1-t)u(x)<w(x)-(1-t)u(x)$, for all $x\in (a,b)$, it follows that if $J$ is a subinterval of $[-1,1]$, such that $x_t\in(a,b)$, then either it holds $y_t=w(x_t)-(1-t)u(x_t)$, for all $t\in J$ or it holds $y_t=z(x_t)-(1-t)u(x_t)$, for all $t\in J$. Notice that if $x_{t_0}<b$, then we may assume (by choosing a smaller $\delta$ if necessary) that $x_t<b$, for all $t\in[t_0-\delta,t_0)$. Thus, if $x_t\neq x_{t_0}$ in $[t_0-\delta,t_0)$, one can take $J=[t_0-\delta,t_0)$, completing the proof of Claim 1. $\square$

Next, fix $t_0\in(-1,1]$. We will show that if $x_{t_0}<b$, then $\rho_{L_t}(v)$ is differentiable from the left at $t=t_0$. Since,  
\begin{equation}\label{eq-prop-differentiability-2-1}
\rho_{L_t}(v)=\sqrt{1+\left(\frac{v_2}{v_1}\right)^2}x_t, \qquad t\in[-1,1],
\end{equation}
it follows that case (iii) in Claim 1 can be excluded. By considering the reflections of $L$ and $v$ with respect to the $x_1$-axis (if necessary), we may assume that case (i) occurs.  For $t\in[-1,1]$ and $x\in(0,b)$, set 
$$F(t,x):=\frac{w(x)-(1-t)u(x)}{x}$$
and notice that $F(t,x_t)= v_2/v_1$, for all $t\in[t_0-\delta,t_0)$.

First assume that $x_t<x_{t_0}$ in $[t_0-\delta,t_0)$.  Remark that the concavity of $w$ implies that the left derivatives $w'_-(x)$ and $u'_-(x)$ exist for all $x\in(a,b)$. Using this, we easily arrive at
\begin{equation}\label{eq-prop-differentiablity-2-2}
\lim_{{(t,x)\to(t_0,x_{t_0})}\atop{t<t_0,\ x<x_{t_0}}}\frac{F(t,x)-F(t,x_{t_0})}{x-x_{t_0}}=\frac{x_1w'_-(x_{t_0})-w(x_{t_0})}{x_{t_0}^2}.
\end{equation}
Since
$$0=\frac{F(t,x_t)-F(t_0,x_{t_0})}{t-t_0}=\frac{F(t,x_t)-F(t_0,x_{t_0})}{x_t-x_{t_0}}\frac{x_t-x_{t_0}}{t-t_0}+\frac{u(x_{t_0})}{x_{t_0}},$$ for $t\in[t_0-\delta,t_0)$ and since
(\ref{eq-prop-differentiablity-2-2}) holds, we conclude that 
$$\lim_{t\to t_0^-}\frac{x_t-x_{t_0}}{t-t_0}=-\frac{u(x_{t_0})}{x_{t_0}}\frac{x_{t_0}^2}{x_{t_0}w'_-(x_{t_0})-w(x_{t_0})},$$as long as $x_{t_0}w'_-(x_{t_0})-w(x_{t_0})\neq 0$. But if $x_{t_0}w'_-(x_{t_0})-w(x_{t_0})$ was equal to zero, then (since $x_{t_0}u(x_{t_0})\neq 0$), the ratio $(x_t-x_{t_0})/(t-t_0)$ would be unbounded. This, by (\ref{eq-prop-differentiability-2-1}) and Proposition \ref{proposition-differentiability-1} would lead us to a contradiction and, therefore (again by (\ref{eq-prop-differentiability-2-1})) 
$(d/dt)\rho_{L_t}(v)|_{t=t_0^-}$ exists and it holds
\begin{equation}\label{eq-prop-differentiablility-2-3}
\frac{d}{dt}\rho_{L_t}(v)\Big|_{t=t_0^-}=-\sqrt{1+\left(\frac{v_2}{v_1}\right)^2}\frac{x_{t_0}u(x_{t_0})}{x_{t_0}w'_-(x_{t_0})-w(x_{t_0})}.
\end{equation} 

The case where $x_t>x_{t_0}$ in a left neighbourhood of $t=t_0$ can be treated similarly; one has to replace $w'_-(x)$ and $u'_-(x)$ by $w'_+(x)$ and $u'_+(x)$ in the previous argument.

Finally, we need to show that $(d/dt)\rho_{L_t}(v)$ is differentiable from the left at $t=t_0$, if $x_{t_0}=b$. As before, we may assume that there exists $\delta>0$, such that  $x_t<b$ and $y_t=w(x_t)-(1-t)u(x_t)$, for all $t\in[t_0-\delta,t_0)$. Then, (\ref{eq-prop-differentiablility-2-3}) together with the concavity of $w$ shows that $(d/dt)\rho_{L_t}(v)|_{t=s^-}$ exists for any $s\in(t_0-\delta,t_0)$ and the limit 
$$\lim_{s\to t_0^-} \frac{d}{dt}\rho_{L_t}(v)\Big|_{t=s^-}$$also exists and is finite (notice that this is true even if $w'_-(b)=-\infty$). This easily shows that the left derivative $(d/dt)\rho_{L_t}(v)|_{t=t_0^-}$ exists.
\end{proof}

Taking (\ref{eq-radial-function}) into account, an immediate consequence of Proposition \ref{proposition-differentiability-1} and Proposition \ref{prop-differentiability-2} is the following.

\begin{corollary}\label{prop-diff}
Let $L$ be a convex body in $\Rn$ that contains the origin in its interior. Then, the ratio
$\frac{h_{(L_t)^\circ}(v)-h_{L^\circ}(v)}{t-1}$ is uniformly bounded in $[-1,1)\times \Sn$ and the left derivative $\frac{d}{dt}h_{(L_t)^\circ}(v)\Big|_{t=1^-}$ exists for all $v\in \Sn$.
\end{corollary}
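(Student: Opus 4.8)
The plan is to reduce everything to the two propositions just established, using the identity relating the support function of the polar body to the radial function. By \eqref{eq-radial-function} we have $h_{(L_t)^\circ}(v)=1/\rho_{L_t}(v)$ for every $t\in[-1,1]$ and $v\in\Sn$, and $L_1=L$, so for $t\in[-1,1)$
\[
\frac{h_{(L_t)^\circ}(v)-h_{L^\circ}(v)}{t-1}
=-\frac{1}{\rho_{L_t}(v)\,\rho_{L}(v)}\cdot\frac{\rho_{L_t}(v)-\rho_{L}(v)}{t-1}.
\]
Thus the statement will follow once both factors on the right are controlled.

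First I would record a uniform positive lower bound for the radial functions of the symmetrization, namely that there is $c>0$ with $\rho_{L_t}(v)\ge c$ for all $(t,v)\in[-1,1]\times\Sn$. Indeed, by Lemma \ref{lemma-differentiability-1} the map $t\mapsto h_{L_t}$ is Lipschitz into $L^\infty(\Sn)$, hence the family $\{L_t\}_{t\in[-1,1]}$ is continuous in the Hausdorff metric; since each $L_t$ contains $o$ in its interior (as noted in the proof of Lemma \ref{lemma-differentiability-2}), the function $t\mapsto\min_{v\in\Sn}\rho_{L_t}(v)$ is positive and lower semicontinuous on the compact interval $[-1,1]$, hence bounded below by some $c>0$. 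Next, applying Proposition \ref{proposition-differentiability-1} with $t_0=1$, the ratio $(\rho_{L_t}(v)-\rho_{L}(v))/(t-1)$ is bounded in absolute value by some constant $C$ on $([-1,1]\setminus\{1\})\times\Sn$; combined with $\rho_{L_t}(v)\,\rho_{L}(v)\ge c^2$ this shows the displayed quotient is bounded by $C/c^2$ on $[-1,1)\times\Sn$, which is the first assertion.

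For the second assertion, fix $v\in\Sn$. Continuity of $t\mapsto\rho_{L_t}(v)$ (again from Lemma \ref{lemma-differentiability-1} via \eqref{eq-radial-function}) gives $\rho_{L_t}(v)\to\rho_{L}(v)>0$ as $t\to1^-$, while Proposition \ref{prop-differentiability-2} with $t_0=1$ yields the existence of $\frac{d}{dt}\rho_{L_t}(v)\big|_{t=1^-}$. Letting $t\to1^-$ in the displayed identity then gives
\[
\frac{d}{dt}h_{(L_t)^\circ}(v)\Big|_{t=1^-}=-\frac{1}{\rho_{L}(v)^2}\,\frac{d}{dt}\rho_{L_t}(v)\Big|_{t=1^-},
\]
which exists and is finite. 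I do not expect a genuine obstacle here: the substance lies entirely in Propositions \ref{proposition-differentiability-1} and \ref{prop-differentiability-2}, and the only point deserving a word of justification is the uniform lower bound $\rho_{L_t}(v)\ge c$, which rests on the Hausdorff-continuity of the shadow system supplied by Lemma \ref{lemma-differentiability-1} together with a routine compactness argument in $t\in[-1,1]$.
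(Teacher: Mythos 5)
Your argument is correct and is exactly the route the paper intends: the corollary is stated there as an immediate consequence of Propositions \ref{proposition-differentiability-1} and \ref{prop-differentiability-2} via the relation \eqref{eq-radial-function}, which is precisely your reduction. The only detail you add beyond the paper's one-line justification is the uniform positive lower bound $\rho_{L_t}(v)\ge c$, and your compactness/Hausdorff-continuity argument for it is sound (each $L_t$ contains $o$ in its interior, as noted in the proof of Lemma \ref{lemma-differentiability-2}).
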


The next result is the infinitesimal version (its proof follows more or less the same lines) of the well known fact that the volume of the intersection of a convex body with the unit ball $B_2^n$ increases under Steiner-symmetrization. 
\begin{proposition}\label{lemma-prop-st-ym}
Let $L$ be a convex body in $\Rn$ and $a>0$. Then, the function $[-1,1]\ni t\mapsto V(B_2^n\cap aL_t)$ is differentiable from the left at $t=1$. Moreover, it holds \begin{equation}\label{eq-lemma-prop-st-sym}\frac{d}{dt}V(B_2^n\cap aL_t)\Big|_{t=1^-}\leq 0.\end{equation}Finally, if $L$ is not symmetric with respect to the hyperplane $e^\perp$, then there exists an open interval $J\subseteq (0,\infty)$, such that for any $a\in J$, inequality (\ref{eq-lemma-prop-st-sym}) is strict. 
\end{proposition}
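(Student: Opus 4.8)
\emph{Strategy.} Fix $a>0$ and set $M:=aL$; since forming the shadow system commutes with dilations, $\{aL_t\}_{t\in[-1,1]}$ is exactly the Steiner-symmetrization shadow system $\{M_t\}_{t\in[-1,1]}$ attached to $M$ and $e$. Write $M=\{\overline x+ye:\overline x\in M|e^\perp,\ z_M(\overline x)\le y\le w_M(\overline x)\}$ as in (\ref{eq-form}), with $z_M$ convex, $w_M$ concave, and set $m:=(z_M+w_M)/2$, $\lambda:=(w_M-z_M)/2$. Integrating in the direction $e$ (Fubini),
$$V(B_2^n\cap M_t)=\int_{(M|e^\perp)\cap B}\ell(t,\overline x)\,d\overline x,$$
where $B$ is the unit ball of $e^\perp$ and $\ell(t,\overline x)$ is the length of the fiber of $B_2^n\cap M_t$ over $\overline x$. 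By the description of $L_t$ preceding (\ref{eq-form}), the fiber of $M_t$ over $\overline x$ is the segment centred at $t\,m(\overline x)$ of half-length $\lambda(\overline x)$, while the fiber of $B_2^n$ over $\overline x$ is $[-r(\overline x),r(\overline x)]$ with $r(\overline x):=\sqrt{1-|\overline x|^2}$. Hence $\ell(t,\overline x)=\varphi_{\lambda(\overline x),r(\overline x)}\!\big(t\,m(\overline x)\big)$, where $\varphi_{\lambda,r}(c)$ denotes the length of $[c-\lambda,c+\lambda]\cap[-r,r]$.

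The one–variable function $\varphi_{\lambda,r}$ is continuous and piecewise linear, even in $c$, $1$-Lipschitz in $c$, and non-increasing in $|c|$ (recentring a segment of fixed length can only enlarge its overlap with the symmetric interval $[-r,r]$). Thus for each $\overline x$ in the open set $U:=\inter(M|e^\perp)\cap\inter B$: (i) $t\mapsto\ell(t,\overline x)$ is continuous and piecewise linear, hence left-differentiable at $t=1$; (ii) $|\ell(t,\overline x)-\ell(1,\overline x)|\le|m(\overline x)|\,|t-1|$; (iii) $t\mapsto\ell(t,\overline x)$ is non-increasing on $[0,1]$, since $t\mapsto|t\,m(\overline x)|$ is. Moreover $z_M,w_M$ are continuous on $\inter(M|e^\perp)$ and $(\lambda,r,c)\mapsto\varphi_{\lambda,r}(c)$ is jointly continuous, so $\ell$ is continuous on $\R\times U$; as $(M|e^\perp)\cap B\setminus U$ is Lebesgue-null and $|m|$ is bounded on the bounded set $M|e^\perp$, (i) and (ii) let us differentiate under the integral by dominated convergence as $t\to1^-$, giving that $t\mapsto V(B_2^n\cap M_t)$ is left-differentiable at $t=1$ and
$$\frac{d}{dt}V(B_2^n\cap M_t)\Big|_{t=1^-}=\int_{(M|e^\perp)\cap B}\partial_t^-\ell(1,\overline x)\,d\overline x\ \le\ 0,$$
where the inequality is (iii). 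This is (\ref{eq-lemma-prop-st-sym}).

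For strictness, the explicit shape of $\varphi_{\lambda,r}$ on $[0,\infty)$ — equal to $2\min(\lambda,r)$ on $[0,|r-\lambda|]$, to $r+\lambda-c$ on $[|r-\lambda|,r+\lambda]$, and $0$ afterwards — shows that $\partial_t^-\ell(1,\overline x)=-|m(\overline x)|<0$ whenever $\lambda(\overline x),r(\overline x)>0$ and $|r(\overline x)-\lambda(\overline x)|<|m(\overline x)|<r(\overline x)+\lambda(\overline x)$; geometrically, this says the fibers of $M$ and of $B_2^n$ over $\overline x$ overlap with neither containing the other. Now suppose $L$ is not symmetric with respect to $e^\perp$. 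Then $m_L:=(z_L+w_L)/2\not\equiv0$ on $L|e^\perp$, so there is $\overline z_0\in\inter(L|e^\perp)$ with $m_0:=|m_L(\overline z_0)|>0$; moreover $w_L-z_L$ is a non-negative concave function on $L|e^\perp$ that cannot vanish identically ($L$ being $n$-dimensional), so its zero set lies in $\bd(L|e^\perp)$ and $\lambda_0:=(w_L-z_L)(\overline z_0)/2>0$. Put $\mu_0:=|\overline z_0|$. For $M=aL$ we have, at $\overline x=a\overline z_0$, $|m(\overline x)|=am_0$, $\lambda(\overline x)=a\lambda_0$ and $r(\overline x)=\sqrt{1-a^2\mu_0^2}$, so the condition above becomes $\big|\,am_0-a\lambda_0\,\big|<\sqrt{1-a^2\mu_0^2}<am_0+a\lambda_0$, which after squaring is equivalent to
$$\frac{1}{(m_0+\lambda_0)^2+\mu_0^2}<a^2<\frac{1}{(m_0-\lambda_0)^2+\mu_0^2}.$$
This range is open and non-empty (because $m_0\lambda_0>0$), and it forces $a\mu_0<1$, so that $a\overline z_0\in U$ and $r(a\overline z_0)>0$. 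Let $J\subseteq(0,\infty)$ be the corresponding open interval of $a$. For every $a\in J$ we get $\partial_t^-\ell(1,a\overline z_0)<0$; since this is an open condition in $\overline x$ (strict inequalities among functions continuous on $\inter(aL|e^\perp)\cap\inter B$), it persists on a neighbourhood of $a\overline z_0$, hence on a set of positive $(n-1)$-measure, while $\partial_t^-\ell(1,\cdot)\le0$ everywhere. Therefore $\frac{d}{dt}V(B_2^n\cap aL_t)|_{t=1^-}<0$ for all $a\in J$.

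The delicate point is the left-differentiability at $t=1$: there is no concavity of $t\mapsto V(B_2^n\cap M_t)$ to lean on (clipping to $B_2^n$ destroys the concavity in $t$ of the fiber length), so the derivative must be passed through the integral by hand — precisely what the uniform Lipschitz bound (ii) together with the pointwise piecewise-linearity (i) make possible. Once that is in place, both the sign of the derivative and the strictness reduce to the elementary analysis of the single function $\varphi_{\lambda,r}$.
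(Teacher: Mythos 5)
Your proof is correct and follows essentially the same route as the paper: Fubini over the fibers parallel to $e$, a pointwise Lipschitz bound letting you differentiate under the integral, the unimodality of the fiber-overlap length in $t$ giving the sign, and strictness from a fiber where the two segments overlap without containment. The only (cosmetic) differences are that you exploit the explicit piecewise-linear overlap function $\varphi_{\lambda,r}$ in place of the paper's evenness-plus-log-concavity argument, and you compute the interval $J$ explicitly where the paper invokes a continuity argument.
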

\begin{proof}
To prove the differentiability property and (\ref{eq-lemma-prop-st-sym}), it suffices to assume that $a=1$. Write $L$ in the form (\ref{eq-form}), for some concave functions $-z,w:L|e^\perp\to\R$ and set $u:=(z+w)/2$. For $x\in e^\perp$, $t\in[-1,1]$, set $\Phi(t,x):=V_1(B_2^n\cap L_t\cap (x+\R x))$ to be the length of the intersection of $B_2^n$, the convex body $L_t$ and the line which passes through $x$ and is parallel to $e$. Since $\Phi(t,x)=V_1(B_2^n\cap (x+\R e)\cap (L_0\cap (x+\R x)+tu(x)e))$, it is well known (and easily verified) that $\Phi(t,x)$ is log-concave in $t$. In fact, if $B_2^n\cap (x+\R e)=[x-ae,x+ae]$, for some $a>0$, then
$$\Phi(t,x)=\min\{w(x)-(1-t)u(x),a\}-\max\{z(x)-(1-t)u(x),-a\},$$
while if $B_2^n\cap (x+\R e)=\emptyset$, then $\Phi(t,x)\equiv 0$. In particular, $t\mapsto\Phi(t,x)$ is differentiable from the left at $t=1$ and is also Lipschitz with Lipschitz constant bounded by $2\sup_{x\in L|e^\perp}|u(x)|<\infty$. Then, Fubini's Theorem and the Bounded Convergence Theorem yield
\begin{equation}\label{eq-l-dec-st-s-0}\lim_{t\to 1^-}\frac{V(B_2^n\cap L)-V(B_2^n\cap L_t)}{1-t}=\int_{e^\perp}\lim_{t\to 1^-}\frac{\Phi(1,x)-\Phi(t,x)}{1-t}dt=\int_{e^\perp}\frac{d}{dt}\Phi(t,x)\Big|_{t=1^-}dx.
\end{equation}
This proves our first assertion.

Notice that $B_2^n\cap (x+\R e)$ and $L_0\cap (x+\R e)$ are both symmetric line segments (possibly degenerate or empty) with respect to the hyperplane $e^\perp$. Therefore, $\Phi(t,x)$ is even in $t$. This, together with the log-concavity property mentioned above shows that the function $[-1,1]\ni t\mapsto \Phi(t,x)$ is non-decreasing on $[-1,0]$ and non-increasing on $[0,1]$, thus 
\begin{equation}\label{eq-l-dec-st-s}
\Phi(1,x)=\Phi(-1,x)\leq \Phi(t,x),\qquad \textnormal{for all }t\in[-1,1]
\end{equation}
and 
\begin{equation}\label{eq-l-dec-st-s-2}
\frac{d}{dt}\Big|_{t=1^-}\Phi(t,x)\leq 0.
\end{equation}
Equations (\ref{eq-l-dec-st-s-0}) and (\ref{eq-l-dec-st-s-2}) immediately give (\ref{eq-lemma-prop-st-sym}).

Next, assume that the line segments $L\cap(x+\R e)$, $B_2^n\cap (x+\R e)$ satisfy the following:
\begin{enumerate}[a)]
\item $L\cap (x+\R e)$ is not symmetric with respect to the hyperplane $e^\perp$.
\item $L\cap (x+\R e)\not\subseteq B_2^n\cap (x+\R e)$ and $B_2^n\cap (x+\R e)\not\subseteq L\cap (x+\R e)$.
\item $V_1(B_2^n\cap L\cap (x+\R e)>0$.
\end{enumerate}
Then, assumptions (a)-(c) together with (\ref{eq-l-dec-st-s}) and the log-concavity of $t\mapsto \Phi(t,x)$ ensure that the function $\Phi(t,x)$ is strictly decreasing in $t$ on $[0,1]$. Moreover, using again the log-concavity property together with assumption (c), we easily conclude that $d\Phi(t,x)/dt|_{t=1^-}$ is strictly negative. This, together with (\ref{eq-l-dec-st-s-0}), (\ref{eq-l-dec-st-s-2}) and a continuity argument, shows that whenever there exists a point $x\in e^\perp$ satisfying assumptions (a)-(c), inequality (\ref{eq-lemma-prop-st-sym}) is strict with $a=1$. 

To finish with our proof, simply observe that if $L$ is not symmetric with respect to $e^\perp$, then there exists $a>0$ and a point $x\in e^\perp$ that satisfies assumptions (a)-(c) when $L$ is replaced with $aL$. Another continuity argument shows that the same is true in a whole neighbourhood $J$ of $a$.
\end{proof}

\section{Axial symmetry of solutions}
\hspace*{1.5em}The main goal of this section is to establish the most important part of Theorem \ref{thm-main}. More specifically, we prove the following.
\begin{proposition}\label{lemma-key}
Let $K$ be a convex body in $\Rn$ that contains the origin in its interior and solves (\ref{eq-thm-main}) for some $G\in{\cal A}(n)$. Then, $K$ is symmetric with respect to some straight line through the origin. Moreover, if $b(K)=o$, then $K$ is a Euclidean ball centered at the origin.
\end{proposition}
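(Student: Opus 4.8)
The plan is to split the proof into two parts: (i) show that $K$ must be symmetric with respect to every hyperplane through the origin that bisects its volume --- and, when $b(K)=o$, with respect to \emph{every} hyperplane through the origin, so that $K$ is $O(n)$-invariant, hence a Euclidean ball centered at $o$; and (ii) upgrade the reflective symmetries from (i) to axial symmetry. The engine for (i) is that a solution of (\ref{eq-thm-main}) is a critical point of a functional that Steiner-symmetrization of the \emph{polar} body moves in a definite direction, so that criticality forces $K$ to be already symmetric.

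By Remark \ref{remark-we-may-assume} I may assume $G(\theta)=\big(\int_0^1 r^{n-1}F(r\theta)\,dr\big)'$ for a continuous strictly increasing $F:[0,\infty)\to\R$ with $F(0)=0$; put $\bar H(\theta):=\int_0^1 r^{n-1}F(r\theta)\,dr$, so $\bar H\in C^1$ with $\bar H'=G$, and for a convex body $L$ with $o\in\inter L$ set $\Psi(L):=\int_{B_2^n}F(h_L(x))\,dx=\int_{\Sn}\bar H(h_L(v))\,d\Ha(v)$ (write $x=rv$ and use homogeneity of $h_L$). The key elementary identity I would prove is $\Psi(L)=\int_0^\infty\big[V(B_2^n)-V(B_2^n\cap aL^\circ)\big]\,dF(a)$: indeed $nV(B_2^n\cap aL^\circ)=\int_{\Sn}\min(1,a/h_L(v))^n\,d\Ha(v)$ since $\rho_{L^\circ}=1/h_L$, and then the layer-cake formula $1-\min(1,a/h)^n=\int_0^1 nr^{n-1}\mathbf 1_{\{rh\ge a\}}\,dr$, Fubini, $F(0)=0$ and the substitution $x=rv$ give the claim. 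Now fix a unit vector $e$, apply the Steiner-symmetrization shadow system of Section 2 to $K^\circ$, denote it $\{(K^\circ)_t\}_{t\in[-1,1]}$ and set $P_t:=((K^\circ)_t)^\circ$, a convex body with $o\in\inter P_t$, $P_1=K$, $P_t^\circ=(K^\circ)_t$; the identity gives $\Psi(P_t)=\int_0^\infty[V(B_2^n)-V(B_2^n\cap a(K^\circ)_t)]\,dF(a)$. Since $(K^\circ)_t$ stays between two fixed balls for $t\in[-1,1]$, the integrand is supported in a bounded range of $a$ on which, by Proposition \ref{lemma-prop-st-ym} and the Lipschitz estimate in its proof, $t\mapsto V(B_2^n\cap a(K^\circ)_t)$ is left-differentiable at $t=1$ with derivative $\le 0$ and is uniformly Lipschitz; dominated convergence then yields $\frac{d}{dt}\big|_{t=1^-}\Psi(P_t)=\int_0^\infty\frac{d}{dt}\big|_{t=1^-}[V(B_2^n)-V(B_2^n\cap a(K^\circ)_t)]\,dF(a)\ge 0$, and, by the strict part of Proposition \ref{lemma-prop-st-ym} together with the fact that $dF$ charges every interval, the inequality is strict unless $K^\circ$ --- equivalently $K$ --- is symmetric about $e^\perp$.

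On the other hand, Corollary \ref{prop-diff} supplies $\dot P(v):=\frac{d}{dt}\big|_{t=1^-}h_{P_t}(v)$ together with a uniform bound on the difference quotients, so since $\bar H\in C^1$, dominated convergence gives $\frac{d}{dt}\big|_{t=1^-}\Psi(P_t)=\int_{\Sn}G(h_K)\,\dot P\,d\Ha=\int_{\Sn}\dot P\,dS_K$, the last step by (\ref{eq-thm-main}). I then bound $\int_{\Sn}\dot P\,dS_K\le 0$: if $b(K)=o$, or if $e^\perp$ bisects $V(K)$, Lemma \ref{lemma-polar} applied to $L=K^\circ$ gives $V(P_t)\ge V(P_1)=V(K)$ for all $t\in[-1,1]$, whence by Minkowski's first inequality (\ref{eq-Minkowski}) $V(P_t,K)\ge V(P_t)^{1/n}V(K)^{(n-1)/n}\ge V(K)=V(K,K)$, i.e. $\int_{\Sn}(h_{P_t}-h_K)\,dS_K\ge 0$; dividing by $t-1<0$ and letting $t\to1^-$ (dominated convergence against the fixed finite measure $S_K$) gives $\int_{\Sn}\dot P\,dS_K\le 0$. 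Comparing with the previous paragraph, $0\le\frac{d}{dt}\big|_{t=1^-}\Psi(P_t)=\int_{\Sn}\dot P\,dS_K\le 0$, so the derivative vanishes and hence $K$ is symmetric about $e^\perp$. Applying this to $OK$ (which again solves (\ref{eq-thm-main})) for an arbitrary orthogonal $O$ shows that $K$ is symmetric with respect to every hyperplane through $o$ bisecting $V(K)$; and if $b(K)=o$ the bisecting hypothesis is never needed, so $K$ is symmetric about every hyperplane through $o$, hence $O(n)$-invariant, hence a ball centered at $o$.

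It remains to pass from "symmetric with respect to every volume-bisecting hyperplane through $o$" to axial symmetry. If $c:=b(K)=o$ we are done; otherwise every such hyperplane contains $c$ (a reflection fixing $K$ fixes $b(K)$), hence contains $\ell:=\textnormal{span}\{c\}$, and the set of unit normals of these hyperplanes is a closed, antipodal, nonempty subset of $\Sn\cap c^\perp$, namely the zero set of the continuous odd function $u\mapsto V(K\cap\{\langle\cdot,u\rangle\ge0\})-\tfrac12 V(K)$, and it is invariant under all the reflections it parametrizes. I would argue that this forces the group generated by those reflections to have closure acting transitively on $\Sn\cap\ell^\perp$, so that every section of $K$ orthogonal to $\ell$ is a ball and $K$ is a body of revolution about the line $\ell$ through $o$. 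I expect this last step to be the main obstacle: for $n\ge4$ the zero set above is automatically of positive dimension, which drives the argument, but for $n=3$ one must use that $K$ genuinely solves (\ref{eq-thm-main}) and not merely that it enjoys some reflective symmetry. The remaining delicate point --- that all the one-sided $t$-derivatives at $t=1$ exist and that the interchanges of limits above are legitimate --- is precisely what the groundwork of Sections 2--4 (notably Corollary \ref{prop-diff}, via Proposition \ref{prop-differentiability-2}) is built to provide, so I would invoke it rather than reprove it.
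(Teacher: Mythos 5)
Your first two paragraphs are essentially the paper's own argument: the functional $\int_{\Sn}\overline H(h_{L^t})\,d\Ha$ with $\overline H'=G$ coming from Remark \ref{remark-we-may-assume}, the layer-cake identity reducing its one-sided derivative to the quantities controlled by Proposition \ref{lemma-prop-st-ym}, and the opposite bound via Lemma \ref{lemma-polar} and Minkowski's first inequality are exactly Lemma \ref{lemma-eq-l-st-s-der} and Lemma \ref{lemma-before-key}(i), with Corollary \ref{prop-diff} justifying the limit interchanges (the paper uses Fatou where you use dominated convergence; both work). So the conclusion that $K$ is symmetric with respect to every volume-bisecting hyperplane through $o$, and is a ball centered at $o$ when $b(K)=o$, is obtained the same way and is fine.

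The genuine gap is the final passage from these reflective symmetries to axial symmetry. Your argument there is only a plan: the assertion that the reflections in the bisecting hyperplanes generate a group whose closure acts transitively on $\Sn\cap\ell^\perp$ is exactly what needs proof, and you concede you cannot close it (in particular for $n=3$, where the set of admissible normals is a closed antipodal subset of a circle and could a priori be finite, generating only a dihedral group; your suggestion to re-invoke the equation at that point is not carried out, and the claimed automatic ``positive dimension'' for $n\ge 4$ is also not an argument for transitivity). The paper avoids any such group-closure reasoning with a short linear-algebra dichotomy: by a continuity (ham-sandwich type) argument, every subspace of dimension at most $n-2$ lies in some symmetry hyperplane (Fact 1), and by induction one finds $n-1$ mutually orthogonal symmetry normals $e_1,\dots,e_{n-1}$ (Fact 2); then either every normal $e_V$ furnished by Fact 1 is orthogonal to $e_n$, in which case every hyperplane containing $\R e_n$ is a symmetry hyperplane and $K$ is axially symmetric about $\R e_n$, or some $e_V$ is not, in which case $e_1,\dots,e_{n-1},e_V$ span $\Rn$, the corresponding reflections force $b(K)=o$, and $K$ is a ball. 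This closes the step in all dimensions using only the property you already established, with no further use of (\ref{eq-thm-main}). As it stands, your proposal proves the $b(K)=o$ case but not the axial-symmetry statement.
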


For a convex body $L$ that contains the origin in its interior, set $L^t:=((L^\circ)_t)^\circ$, $t\in[-1,1]$. For $v\in\Sn$, set also $h'_{L}(v)$ to be the left derivative of $h_{L^t}(v)$ at $t=1$ (which exists always thanks to Corollary \ref{prop-diff}).
The following two lemmas will be of great importance towards the proof of Proposition \ref{lemma-key}.
\begin{lemma}\label{lemma-eq-l-st-s-der}Let $L$ be a convex body that contains the origin in its interior and is not symmetric with respect to the hyperplane $e^\perp$ and let $G\in{\cal A}(n)$. Then,
$$\int_{\mathbb{S}^{n-1}}G(h_{L}(v))h'_L(v)d\Ha(v)>0.$$
\end{lemma}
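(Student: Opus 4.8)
The goal is to show that for a convex body $L$ not symmetric about $e^\perp$ and $G\in\mathcal A(n)$,
\[
\int_{\Sn} G(h_L(v))\, h'_L(v)\, d\Ha(v) > 0,
\]
where $h'_L(v) = \frac{d}{dt}\big|_{t=1^-} h_{L^t}(v)$ and $L^t = ((L^\circ)_t)^\circ$.

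\medskip

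\noindent\textbf{The plan.} By Remark \ref{remark-we-may-assume} I may assume $G(\theta) = \left(\int_0^1 r^{n-1} F(r\theta)\,dr\right)'$ for a continuous strictly increasing $F$ with $F(0)=0$; write $H(\theta) = \int_0^1 r^{n-1} F(r\theta)\,dr$, so $H$ is an antiderivative of $G$ and $\theta G(\theta) + nH(\theta) = F(\theta)$. The first step is to recognize the integral as (up to a constant) the left $t$-derivative at $t=1$ of a geometric functional along the Steiner-symmetrization shadow system of $L^\circ$. Concretely, I would consider
\[
\Psi(t) := \int_{\Sn} H\big(h_{L^t}(v)\big)\, d\Ha(v),
\]
and observe that, since $H'=G$ and using Corollary \ref{prop-diff} (uniform boundedness of the difference quotients of $h_{L^t}$ and existence of the left derivative at $t=1$) together with the dominated convergence theorem, $\Psi$ is differentiable from the left at $t=1$ with
\[
\Psi'(1^-) = \int_{\Sn} G\big(h_L(v)\big)\, h'_L(v)\, d\Ha(v).
\]
So it suffices to prove $\Psi'(1^-) > 0$.

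\medskip

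\noindent\textbf{Identifying $\Psi$ with a volume.} The key observation is the integral-geometric identity
\[
\int_{\Sn} H(h_M(v))\, d\Ha(v) = \int_{\Sn}\!\int_0^1 r^{n-1} F(r\, h_M(v))\, dr\, d\Ha(v)
= n \int_{B_2^n} \frac{1}{|x|}\, F\!\big(|x|\, h_M(x/|x|)\big)\, dx / \text{(something)} ,
\]
more precisely, passing to polar coordinates $x = r v$, $dx = r^{n-1}\,dr\,d\Ha(v)$, one gets
\[
\int_{\Sn} H(h_M(v))\, d\Ha(v) = \int_{B_2^n} \frac{1}{|x|}\, F\big(h_M(x)\big)\, dx,
\]
using positive homogeneity $h_M(rv) = r\, h_M(v)$. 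Now apply this with $M = L^t = ((L^\circ)_t)^\circ$: by \eqref{eq-radial-function}, $h_{L^t}(x) = 1/\rho_{(L^\circ)_t}(x)$ for $x\in\Sn$, and $h_{L^t}(x) \le 1 \iff x \in (L^\circ)_t$. More generally the sublevel sets of $x\mapsto h_{L^t}(x)$ are dilates of $(L^\circ)_t$. Thus, writing $F$ via the layer-cake over its (increasing) structure and using that $F$ is increasing with $F(0)=0$, the quantity $\Psi(t)$ becomes an increasing-in-$F$ average of volumes of the form $V(B_2^n \cap \lambda (L^\circ)_t)$ over $\lambda > 0$. Since $(L^\circ)_t = (L^\circ)_t$ is exactly the Steiner-symmetrization shadow system of $L^\circ$ along direction $e$, Proposition \ref{lemma-prop-st-ym} applies: each $t\mapsto V(B_2^n\cap \lambda (L^\circ)_t)$ is left-differentiable at $t=1$ with nonpositive derivative, and strictly negative for $\lambda$ in some open interval $J$ — provided $L^\circ$ is not symmetric about $e^\perp$, which holds iff $L$ is not symmetric about $e^\perp$. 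Because $F$ is strictly increasing and picks up a genuine contribution from the range of $\lambda$-values hitting $J$, the aggregated derivative $\Psi'(1^-)$ is \emph{strictly negative}.

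\medskip

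\noindent\textbf{Fixing the sign.} A sign bookkeeping remark: the symmetrization shadow system is parametrized so that $L_1 = L$ and $L_0 = St_e(L)$; applying it to $L^\circ$, the body $(L^\circ)_1 = L^\circ$ and $(L^\circ)_0 = St_e(L^\circ)$, and symmetrization \emph{increases} $V(B_2^n\cap \lambda\,\cdot)$, so moving $t$ from $1$ down toward $0$ increases these volumes, i.e. the left derivative at $t=1$ is $\le 0$ — consistent with Proposition \ref{lemma-prop-st-ym}. But $\Psi(t)$ is built from $h_{L^t} = 1/\rho_{(L^\circ)_t}$, and the sublevel set $\{h_{L^t}\le\lambda\} = \lambda(L^\circ)_t$; a larger $(L^\circ)_t$ means $h_{L^t}$ is \emph{smaller}, hence $H(h_{L^t})$ is smaller, hence $\Psi(t)$ \emph{decreases} as $t$ decreases from $1$. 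Equivalently $\Psi$ is itself (essentially) $n$ times $\int_0^\infty$-type integral of $-V(B_2^n\cap\lambda(L^\circ)_t)$ against $dF$, up to additive/positive-multiplicative constants, so $\Psi'(1^-)$ has the \emph{opposite} sign to $\frac{d}{dt}\big|_{1^-}V(B_2^n\cap\lambda(L^\circ)_t) \le 0$, giving $\Psi'(1^-) \ge 0$ with strict inequality by the $J$-argument. This sign flip is exactly where care is needed.

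\medskip

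\noindent\textbf{Main obstacle.} The routine parts are the differentiation-under-the-integral (justified by Corollary \ref{prop-diff} and bounded convergence) and the polar-coordinates identity. The genuinely delicate step is the clean reduction of $\Psi(t)$ to a superposition of the volumes $V(B_2^n\cap\lambda(L^\circ)_t)$ with the \emph{correct orientation of the inequality}, including verifying that the strict-negativity interval $J$ from Proposition \ref{lemma-prop-st-ym} for $B_2^n\cap\lambda(L^\circ)_t$ actually contributes under the measure $dF$ (here one uses $F$ strictly increasing, so $F$ is not constant on any interval, hence the set of $\lambda$'s in $J$ is assigned positive weight). One also must handle the value $v$ with $v\perp e$ and the mild non-smoothness of $t\mapsto h_{L^t}(v)$; these are controlled by the one-sided-derivative statements already established. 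I expect the bulk of the writing to be the careful passage between the $\Psi$ formulation and the volume formulation.
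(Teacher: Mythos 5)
Your proposal is correct and follows essentially the same route as the paper: the functional $\Psi(t)=\int_{\Sn}H(h_{L^t}(v))\,d\Ha(v)$, left-differentiation at $t=1$ via Corollary \ref{prop-diff}, the polar-coordinates/layer-cake reduction to the volumes $V(B_2^n\cap\lambda(L^\circ)_t)$, and Proposition \ref{lemma-prop-st-ym} with the strictness interval $J$ (weighted positively because $F$ is continuous and strictly increasing), with the sign resolved exactly as in the paper. The only slip is the spurious factor $1/|x|$ in your polar-coordinates identity: since $H(\theta)=\int_0^1 r^{n-1}F(r\theta)\,dr$ and $dx=r^{n-1}dr\,d\Ha(v)$, the correct statement is $\int_{\Sn}H(h_M(v))\,d\Ha(v)=\int_{B_2^n}F(h_M(x))\,dx$, which is what your argument actually uses and what the paper proves.
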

\begin{proof} Let $F:[0,\infty)\to\R$ be the function associated with $G$ in Remark \ref{remark-we-may-assume} and set $H(\theta):=\int_0^1r^{n-1}F(r\theta)dr$. Set, also, $I(t):=\int_{\Sn}H(h_{L^t}(v))d\Ha(v)$. Corollary \ref{prop-diff} shows that
$$I'_-(1)=\int_{\Sn}G(h_L(v))h'_L(v)d\Ha(v).$$
Using polar coordinates, we obtain
\begin{eqnarray*}
I(t)=\int_{\Sn}H(h_{L^t}(v))d\Ha(v)&=&\int_{\Sn}\int_0^1r^{n-1}F(rh_{L^t}(v))drd\Ha(v)\\
&=&\int_{\Sn}\int_0^1 r^{n-1}F(h_{L^t}(rv))drd\Ha(v)=\int_{B_2^n}F(h_{L^t}(x))dx.\\
\end{eqnarray*}
Thus, using the layer cake formula, the continuity and the strict monotonicity of $F$, we can write
\begin{eqnarray*}
I(t)&=&\int_0^\infty V(B_2^n\cap \{F(h_{L^t})\geq s\})ds\\
&=&\int_0^{\sup F} V(B_2^n\cap \{F(h_{L^t})\geq s\})ds\\
&=&\int_0^{\sup F} [V(B_2^n)- V(B_2^n\cap \{F(h_{L^t})< s\})]ds\\
&=&\int_0^{\sup F} [V(B_2^n)- V(B_2^n\cap \{h_{L^t}< F^{-1}(s)\})]ds\\
&=&\int_0^{\sup F} [V(B_2^n)- V(B_2^n\cap F^{-1}(s)(L^\circ)_t)]ds\\
\end{eqnarray*}
Therefore, from Proposition \ref{lemma-prop-st-ym} and Fatou's Lemma, we easily get 
$$I'_-(1)=\liminf_{t\to 1^-}\frac{I(t)-I(1)}{t-1}\geq \int_0^{\sup F} \frac{d}{dt}[-V(B_2^n\cap F^{-1}(s)(L^\circ)_t)]\Big|_{t=1^-}ds>0,$$where we additionally used the fact that since $L$ is not symmetric with respect to the hyperplane $e^\perp$, $L^\circ$ also cannot be symmetric with respect to $e^\perp$.
\end{proof}

\begin{lemma}\label{lemma-before-key}
Let $L$ be a convex body in $\Rn$ that contains the origin in its interior.
\begin{enumerate}[i)]
\item If $b(L)=o$ or $V(L\cap (e^\perp)^+)=V(L\cap (e^\perp)^-)$, then it holds
\begin{equation*}\label{eq-lemma-before-key-main-1}
\int_{\Sn}h'_L(v)dS_L(v)\leq 0.
\end{equation*}
\item If $L$ is centrally symmetric and not an ellipsoid, then there exists an orthogonal map $O$, such that 
\begin{equation*}\label{eq-before-key-main-2}
\int_{\Sn}h'_{OL}(v)dS_{OL}(v)<0.\end{equation*}
\end{enumerate}
\end{lemma}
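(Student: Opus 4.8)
\textbf{Plan of proof for Lemma \ref{lemma-before-key}.}

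The plan is to relate the quantity $\int_{\Sn} h'_L(v)\,dS_L(v)$ to the derivative of the polar volume $V((L_t)^\circ)$ along the Steiner-symmetrization shadow system, after passing to polars. Recall that $L^t = ((L^\circ)_t)^\circ$, so writing $M := L^\circ$ we have $L^t = (M_t)^\circ$ and $h_{L^t} = h_{(M_t)^\circ}$. The first step is the identity
\begin{equation*}
V((M_t)^\circ) = \frac{1}{n}\int_{\Sn} h_{(M_t)^\circ}(v)\,dS_{(M_t)^\circ}(v),
\end{equation*}
but differentiating this directly is awkward; instead I would use the standard first-variation formula for volume under a smooth family of support functions, namely that if $h_t = h_{L^t}$ then $\frac{d}{dt}V(L^t) = \int_{\Sn} \dot h_t(v)\,dS_{L^t}(v)$ (valid here at $t=1^-$ using Corollary \ref{prop-diff} for the existence of the left derivative $h'_L = \dot h_1$, together with the uniform boundedness of the difference quotients from the same corollary to justify passing the limit under the integral via dominated convergence). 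Thus
\begin{equation*}
\frac{d}{dt}V(L^t)\Big|_{t=1^-} = \int_{\Sn} h'_L(v)\,dS_L(v).
\end{equation*}
So the lemma reduces to showing $\frac{d}{dt}V((M_t)^\circ)\big|_{t=1^-} \le 0$ (resp. $<0$ after a rotation) where $M = L^\circ$.

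For part (i): if $b(L) = o$ then $s(L^\circ) = o$, i.e. $b((L^\circ)^\circ) = b(L) = o$ is the condition for $s(M) = o$... more precisely I need the correct translation of the hypothesis into a statement about $M = L^\circ$. If $b(L) = o$, then since $s(M) = o \iff b(M^\circ) = o \iff b(L) = o$, we get $s(M) = o$, and Lemma \ref{lemma-polar} (first case, applied with $L$ there replaced by $M$, noting $b(M^\circ) = o$) gives $V(M^\circ) \le V((M_t)^\circ)$ for all $t \in [-1,1]$; since equality holds at $t=1$ (as $M_1 = M$), the left derivative at $t=1$ is $\le 0$. If instead $V(L\cap(e^\perp)^+) = V(L\cap(e^\perp)^-)$, I would first check this forces $V(M^\circ \cap (e^\perp)^+) = V(M^\circ\cap(e^\perp)^-)$ — careful: $M^\circ = L$, so this is literally the hypothesis, and Lemma \ref{lemma-polar} (second case, with its $L$ being $M$, its hypothesis being $V(M^\circ\cap(e^\perp)^\pm)$ equal) applies directly, again yielding $V((M_t)^\circ) \ge V(M^\circ) = V(M_1{}^\circ)$ with equality at $t=1$, hence left derivative $\le 0$. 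For part (ii): if $L$ is centrally symmetric and not an ellipsoid, then $M = L^\circ$ is also centrally symmetric and not an ellipsoid, so Lemma \ref{lemma-polar-vol-not-ellipsoid} provides an orthogonal map $O$ with $\frac{d}{dt}V(((OM)_t)^\circ)\big|_{t=1^-} < 0$; translating back through $L^t$ and noting $OL = (OM)^\circ$ (orthogonal maps commute with polarity) gives the strict inequality for $OL$.

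The main obstacle I anticipate is the rigorous justification of the first-variation formula $\frac{d}{dt}V(L^t)|_{t=1^-} = \int_{\Sn} h'_L\,dS_L$ at the endpoint $t=1$ with only one-sided, not-necessarily-$C^1$ differentiability of $h_{L^t}(v)$ in $t$: one must interchange $\frac{d}{dt}$ with the integral over $\Sn$, which requires the uniform bound on $(h_{L^t}(v) - h_L(v))/(t-1)$ from Corollary \ref{prop-diff} plus a dominated-convergence argument, and one must also handle that $S_{L^t}$ varies with $t$ (so the cleanest route is to write $V(L^t) = \frac{1}{n}\int h_{L^t}\,dS_{L^t}$, symmetrize the roles, and use the mixed-volume symmetry $\int h_A\,dS_B = \int h_B\,dS_A$ to reduce to differentiating only the support function factor — this is the classical derivation and should go through given the boundedness estimates already established). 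A secondary point to be careful about is the direction of all the polarity translations, since the hypotheses of Lemma \ref{lemma-polar} are phrased in terms of $L^\circ$ and $L^\circ \cap (e^\perp)^\pm$ while here those play the role of $M^\circ = L$; I would state these correspondences explicitly to avoid sign/duality confusion.
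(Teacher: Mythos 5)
Your reduction of the problem to the sign of the one-sided derivative of $t\mapsto V(L^t)=V(((L^\circ)_t)^\circ)$ at $t=1$, via Lemma \ref{lemma-polar} for part (i) and Lemma \ref{lemma-polar-vol-not-ellipsoid} for part (ii), is exactly the right target, and your polarity bookkeeping ($M=L^\circ$, $(OL)^\circ=O(L^\circ)$, hypotheses of Lemma \ref{lemma-polar} translating back to $b(L)=o$ or the equal-half-volumes condition on $L$ itself) is correct. The gap is in the bridge you build between $\int_{\Sn}h'_L\,dS_L$ and that volume derivative. You assert the exact first-variation identity $\frac{d}{dt}V(L^t)\big|_{t=1^-}=\int_{\Sn}h'_L(v)\,dS_L(v)$ and propose to justify it by dominated convergence for the support-function factor together with "the mixed-volume symmetry $\int h_A\,dS_B=\int h_B\,dS_A$". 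That symmetry is false for $n\geq 3$: $\int_{\Sn}h_A\,dS_B=nV(A,B[n-1])$ while $\int_{\Sn}h_B\,dS_A=nV(B,A[n-1])$, and these mixed volumes do not coincide in general (they do only for $n=2$). Even with the correct telescoping decomposition $V(L^t)-V(L)=\frac{1}{n}\sum_{k=0}^{n-1}\int(h_{L^t}-h_L)\,dS(L^t[k],L[n-1-k])$, you would have to pass to the limit of difference quotients that converge only pointwise (the limit $h'_L$ need not be continuous) against mixed area measures that vary with $t$ and converge only weakly; Corollary \ref{prop-diff} alone does not license this interchange. So the crucial identity is unproved, and your proposed repair does not work.

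The paper sidesteps this entirely by observing that only one inequality is needed and obtaining it from Minkowski's first inequality (\ref{eq-Minkowski}): for $t<1$,
\begin{equation*}
\frac{1}{n}\int_{\Sn}\frac{h_{L^t}(v)-h_L(v)}{t-1}\,dS_L(v)=\frac{V(L^t,L)-V(L)}{t-1}\leq V(L)^{(n-1)/n}\,\frac{V(L^t)^{1/n}-V(L)^{1/n}}{t-1},
\end{equation*}
where the left-hand side involves only the fixed measure $S_L$, so Fatou's lemma together with the uniform bound of Corollary \ref{prop-diff} gives $\frac{1}{n}\int_{\Sn}h'_L\,dS_L\leq\liminf_{t\to1^-}\frac{V(L^t,L)-V(L)}{t-1}$; the right-hand side is $\leq 0$ by Lemma \ref{lemma-polar} (and strictly negative after applying $O$ from Lemma \ref{lemma-polar-vol-not-ellipsoid} in case (ii)). To fix your argument you should replace the exact first-variation formula by this one-sided comparison; as written, the step carrying all the analytic weight rests on a false identity.
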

\begin{proof}
To prove (i), let $t\in[-1,1)$. Using Minkowski's first inequality (\ref{eq-Minkowski}), we obtain
\begin{equation}\label{eq-before-key}
\frac{V(L^t,L)-V(L)}{t-1}\leq \frac{V(L^t)^{1/n}V(L)^{(n-1)/n}-V(L)}{t-1}=V(L)^{(n-1)/n}\frac{V(L^t)^{1/n}-V(L)^{1/n}}{t-1}.
\end{equation}
Since by Lemma \ref{lemma-polar}, it holds $V(L^t)\geq V(L)$, we immediately see that
$$\liminf_{t\to 1^-}\frac{V(L^t,L)-V(L)}{t-1}\leq 0.$$
On the other hand, using Fatou's Lemma and Corollary \ref{prop-diff}, we get
\begin{eqnarray}\liminf_{t\to 1^-}\frac{V(L^t,L)-V(L)}{t-1}&=&\liminf_{t\to 1^-}\frac{1}{n}\int_{\Sn}\frac{h_{L^t}(v)-h_L(v)}{t-1}dS_L(v)\nonumber\\
&\geq& \frac{1}{n}\int_{\Sn}\liminf_{t\to 1^-}\frac{h_{L^t}(v)-h_L(v)}{t-1}dS_L(v)=\frac{1}{n}\int_{\Sn}h'_L(v)dS_L(v)\label{eq-before-key-2}.\end{eqnarray}Thus,
(i) holds.

Next assume that $L$ and, therefore, $L^\circ$ is centrally symmetric but not an ellipsoid. Using Lemma \ref{lemma-polar-vol-not-ellipsoid}, we conclude that there exists an orthogonal map $O$ such that 
$$\lim_{t\to 1^-}V(OL)^{(n-1)/n}\frac{V((OL)^t)^{1/n}-V(OL)^{1/n}}{t-1}<0.$$ 
Using $OL$ in the place of $L$ in (\ref{eq-before-key}) and (\ref{eq-before-key-2}), we conclude the validity of (ii).
\end{proof}

The second part of Lemma \ref{lemma-before-key} is going to be used in Section 7.\\
\\
\noindent Proof of Proposition \ref{lemma-key}.\\
First, notice that Lemma \ref{lemma-eq-l-st-s-der} and Lemma \ref{lemma-before-key} (i) imply the following: If $b(K)=o$ or $V(K\cap (e^\perp)^+)=V(K\cap (e^\perp)^-)$, then $K$ is symmetric with respect to the hyperplane $e^\perp$. 
In fact, if $b(K)=o$, the previous statement shows that $K$ is symmetric with respect to the hyperplane $e^\perp$, for every $e\in\Sn$. This clearly shows that $K$ is a Euclidean ball centered at the origin.

\emph{Fact 1. Let $E$ be a subspace of $\Rn$ of dimension at most $n-2$. Then, there exists a hyperplane $H$, containing $E$, such that $K$ is symmetric with respect to $H$.}\\ To see this, let $V$ be any $(n-2)$-dimensional subspace of $\Rn$ that contains $E$. Then, as in \cite{lovasz_simonovits_1993}, the trivial continuity argument shows that there exists a hyperplane $H$, containing $V$, such that $V(K\cap H^+)=V(K\cap H^-)$. Thus, according to our previous discussion, $K$ has to be symmetric with respect to the hyperplane $H$.

\emph{Fact 2. For $1\le k\le n-1$, there exist $e_1,\dots,e_{k}$ mutually orthogonal unit vectors, such that $K$ is symmetric with respect to the hyperplane $e_i^\perp$, $i=1,\dots,k$.}\\
We will prove Fact 2 using induction in $k$. The case $k=1$ follows from Fact 1. Assume that we have found $e_1,\dots,e_{k-1}$ as above for some $2\le k\le n-1$ and set $H:=e_1^\perp\cap\dots \cap e_{k-1}^\perp$. The set $\{e_1,\dots e_{k-1}\}$ extends to an orthonormal basis $\{e_1,\dots,e_{k-1},e'_k,\dots ,e'_n\}$ of $\Rn$. Thus, $H=\textnormal{span}\{e'_k,\dots,e'_n\}$ and $H^\perp=\textnormal{span}\{e_1,\dots,e_{k-1}\}$. Since $\dim H^\perp\le n-2$, we know from Fact 1 that there exists a unit vector $e_k$, such that $e_K^\perp\supseteq H^\perp$ and $K$ is symmetric with respect to $e_k^\perp$. But then $e_k\in H$, thus $e_k$ is orthogonal to $e_1,\dots,e_{k-1}$ and Fact 2 is proved.

Fact 1 shows immediately that there exists a map $G_{n,n-2}\ni V\mapsto e_V\in \Sn$ (where $G_{n,n-2}$ is the set of all $(n-2)$-dimensional subspaces of $\Rn$), such that $V\subseteq e_V^\perp$ and $K$ is symmetric with respect to $e_V^\perp$, for all $V\in G_{n,n-2}$. Let, also, $e_1,\dots,e_{n-1}$ be the unit vectors constructed in Fact 2 (for $k=n-1$) and let $e_n\in e_1^\perp\cap\dots\cap e_{n-1}^\perp\cap \Sn$.

\emph{Case I. The vector $e_V$ is orthogonal to $e_n$, for all $(n-2)$-dimensional subspaces $V$ of $\Rn$.}\\
In this case, let $e_0$ be any unit vector which is orthogonal to $e_n$ and let $V$ be any $(n-2)$-dimensional subspace of $e_0^\perp$ that does not contain $e_n$. According to our assumption, $e_n\in e_V^\perp$ and, therefore (since $V\subseteq e_V^\perp$), it holds $e_V^\perp=e_0^\perp$. In other words, $K$ is symmetric with respect to any hyperplane that contains the line $\R e_n$. This shows that $K$ is symmetric with respect to the line $\R e_n$.

\emph{Case II. There exists an $(n-2)$-dimensional subspace $V$ of $\Rn$, such that $e_V$ is not orthogonal to $e_n$.} \\
Then, the vectors $e_1,\dots,e_{n-1},e_V$ are linearly independent and $K$ is symmetric with respect to the hyperplanes $e_1^\perp,\dots,e_{n-1}^\perp,e_V^\perp$. The latter shows that 
$$\int_K\langle x,e_1\rangle dx=\dots =\int_K\langle x,e_{n-1}\rangle dx=\int_K\langle x,e_V\rangle dx =0.$$ 
Since $e_1,\dots,e_{n-1},e_V$ are linearly independent, it follows that $$\int_K\langle x,y\rangle dx=0,\qquad\textnormal{for all }y\in \Rn.$$This shows that $b(K)=o$ and, therefore, $K$ is a Euclidean ball centered at the origin. In particular, $K$ is symmetric with respect to some (any) straight line through the origin.
$\square$
\\

The fact that any solution $K$ to (\ref{eq-thm-main}) is axially symmetric implies a regularity property for the boundary of $K$ that will be used subsequently.

\begin{corollary}\label{cor-regularity}
Let $K$ be a convex body in $\Rn$, that solves (\ref{eq-thm-main}) for some $G\in{\cal A}(n)$. Then, $K$ is a regular strictly convex body. In particular, $h_K$ is of class $C^1$.
\end{corollary}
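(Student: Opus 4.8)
The plan is to exploit two things already in hand: by Proposition \ref{lemma-key}, $K$ is invariant under the group $G_\ell$ of all rotations of $\Rn$ about some fixed line $\ell$ through the origin (indeed the reflections in the hyperplanes through $\ell$, which preserve $K$, generate it); and, since $h_K$ is continuous and strictly positive, $G(h_K)$ ranges over a compact interval $[c,C]\subset(0,\infty)$, so by (\ref{eq-thm-main}) one has $c\,\Ha\le S_K\le C\,\Ha$ as measures on $\Sn$. Throughout, write $\tau(K,\omega):=\{x\in\bd K:N(K,x)\cap\omega\ne\emptyset\}$ for the reverse spherical image, so that by definition $S_K(\omega)=\Ha(\tau(K,\omega))$. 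The whole argument is a dimension mismatch forced by the rotational symmetry: pushing a low-dimensional set around by $G_\ell$ produces a set of complementary dimension, and the two-sided bound on $S_K$ cannot absorb the discrepancy.

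\emph{Strict convexity.} Suppose some nondegenerate segment $I=[a,b]\subseteq\bd K$ exists. Choose a unit outer normal $v_0$ at the midpoint $\tfrac{a+b}{2}$; averaging $\langle\cdot,v_0\rangle$ over the two endpoints forces $a,b\in F(K,v_0)$, hence $I\subseteq F(K,v_0)$, so the orbit $\Sigma:=G_\ell I\subseteq\bd K$ satisfies $\Sigma\subseteq\tau(K,\Omega)$, where $\Omega:=G_\ell v_0$. Now $\Omega$ is either a point or a ``latitude'' subsphere of $\Sn$, hence $\dim\Omega\le n-2$ and $\Ha(\Omega)=0$; on the other hand $I$ lies neither on $\ell$ (which meets $\bd K$ in two points only) nor in any round $(n-2)$-sphere, so $I$ is transverse to the $G_\ell$-orbit foliation and $\Sigma$ is $(n-1)$-dimensional, so $\Ha(\Sigma)>0$. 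Then
\[
0=C\,\Ha(\Omega)\ \ge\ S_K(\Omega)\ =\ \Ha(\tau(K,\Omega))\ \ge\ \Ha(\Sigma)\ >\ 0,
\]
a contradiction. Hence $K$ is strictly convex, and therefore (as recorded in Section 2) $h_K\in C^1$.

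\emph{Regularity.} Suppose $\bd K$ has a point $p$ with two distinct outer unit normals $u_1\ne u_2$; by convexity of the normal cone, the whole geodesic arc $\gamma$ from $u_1$ to $u_2$ lies in $N(K,p)\cap\Sn$. Let $\phi(v)\in[0,\pi]$ denote the angle between $v$ and an orientation of $\ell$. A short check—using that a normal cone is pointed, and that the planar meridian profile of $K$ (a $2$-dimensional convex region) cannot have a corner whose normal fan is entirely orthogonal to $\ell$—shows $\phi$ is nonconstant on $\gamma$, so $\phi(\gamma)\supseteq(\alpha,\beta)$ for some $\alpha<\beta$, and the $G_\ell$-invariant set $Z:=\{v\in\Sn:\phi(v)\in(\alpha,\beta)\}$ is open, whence $\Ha(Z)>0$. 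On the other hand, $K$ is strictly convex (just proved), so each support set $F(K,v)$ is a single point $x(v)$, the map $v\mapsto x(v)$ is $G_\ell$-equivariant, and $x(v)=p$ for every $v\in N(K,p)$; combining these (and noting $G_\ell(N(K,p)\cap\Sn)\supseteq Z$) gives $x(v)\in G_\ell p$ for every $v$ with $\phi(v)\in(\alpha,\beta)$, i.e. $\tau(K,Z)\subseteq G_\ell p$. Since $G_\ell p$ is a point or a latitude subsphere, $\Ha(\tau(K,Z))=0$, so
\[
0<c\,\Ha(Z)\ \le\ S_K(Z)\ =\ \Ha(\tau(K,Z))\ =\ 0,
\]
again absurd. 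Hence $K$ is regular, which (once more by Section 2) also re-delivers $h_K\in C^1$.

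I expect the fussiest points to be the dimension bookkeeping: verifying that the rotation orbit of a boundary segment is genuinely $(n-1)$-dimensional (equivalently, ruling out a boundary segment lying on $\ell$ or inside a single $G_\ell$-orbit), and, dually, pinning down that the reverse image of the open zone $Z$ is no larger than the lower-dimensional orbit $G_\ell p$—this is where strict convexity is used, and it is also where the degenerate case $p\in\ell$ (for which $N(K,p)\cap\Sn$ is a spherical cap rather than a planar arc) must be disposed of separately. Everything else is soft point-set topology and elementary convexity.
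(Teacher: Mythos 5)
Your argument is correct, and its two halves relate to the paper differently. The regularity half is essentially the paper's own proof: the paper also takes a singular point, observes that its normal cone sweeps out (after rotation about the axis) a nonempty open subset $\overline{N}$ of $\Sn$ whose reverse spherical image is the rotation orbit of a single point, and contrasts $S_K(\overline{N})>0$ (from positivity of $G$) with $\Ha(\text{orbit})=0$. The strict convexity half is where you genuinely diverge: for $n\ge 3$ the paper simply cites Petty's Lemma 8.7 on strict convexity of bodies of revolution with absolutely continuous surface area measure (and for $n=2$ notes that a boundary segment is a facet), whereas you give a self-contained orbit argument --- the segment $I$ lies in a single face $F(K,v_0)$, its $G_\ell$-orbit is an $(n-1)$-dimensional surface of revolution of positive $\Ha$-measure, yet it sits inside $\tau(K,G_\ell v_0)$ with $\Ha(G_\ell v_0)=0$, contradicting $S_K\le C\,\Ha$. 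This buys independence from the external reference at the cost of the two verifications you flag: that the surface of revolution of a nondegenerate segment off the axis has positive $\Ha^{n-1}$-measure (true: its meridian projection is a nondegenerate curve with $r>0$ on a subarc), and that $\phi$ is nonconstant on the normal arc at a singular point. For the latter, the clean statement is that for $p\notin\ell$ the normal cone $N(K,p)$ is contained in the closed meridian half-plane through $p$ (compare $\langle p,v\rangle$ with $\langle gp,v\rangle$ for rotations $g$ to pin down the $\ell^\perp$-component of $v$), so $\phi$ is actually injective on $N(K,p)\cap\Sn$; the case $p\in\ell$ follows since the rotation-invariant pointed cone $N(K,p)$ then contains the arc from $e_\ell$ to any other normal. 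Neither point is a gap, but both deserve the two or three lines just indicated if this is to replace the citation.
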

\begin{proof}
We know that $S_K$ is absolutely continuous with respect to $\Ha$. Moreover, if $\omega$ is a Borel subset of $\Sn$, the identity
$$S_K(\omega)=\int_\omega dS_K=\int_\omega G(h_K(v))dS_K(v),$$
together with the fact that $G$ is positive, shows that \begin{equation}\label{eq-S_K(omega)}
S_K(\omega)>0,\qquad\textnormal{if \ \ }\Ha(\omega)>0.
\end{equation}
If $n=2$, then a segment on the boundary of $K$ is a facet of $K$, a fact that contradicts the absolute continuity of $S_K$. If $n\geq 3$, since $K$ is a body of revolution, the strict convexity of $K$ is proved in \cite[Lemma 8.7]{petty_1985}. It remains to prove that $K$ is regular. Let $T$ be a 2-dimensional profile of the body of revolution $K$ (set $T=K$ if $n=2$) and let $E$ be the 2-dimensional subspace of $\Rn$ spanned by $T$. If $K$ is not regular, then $T$ is also not regular, hence there exists a point $x\in\bd T$ and a non-empty open set $N$ in $\Sn\cap E$, such that $\nu$ supports $T$ at $x$, for all $\nu\in N$. If $n=2$, this already shows that $S_K(N)=0$, which contradicts (\ref{eq-S_K(omega)}). If $n\geq 3$, denote by $\overline{N}$ and $X$ the subsets of $\Sn$ and $\bd K$ respectively, obtained by rotating $N$ and $x$ about the axis of symmetry of $K$, respectively. Then, $\overline{N}$ is non-empty and open in $\Sn$ and each vector $v\in\overline{N}$ supports $K$ at some point of $X$. This shows that $S_K(\overline{N})=\Ha(X)=0$, which is again a contradiction.  
\end{proof}
It should be remarked that since $G$ is strictly positive, Corollary \ref{cor-regularity} can also be deduced from a general regularity theorem due to Caffarelli \cite{cafferelli_1991} for Monge-Amp\`{e}re equations (see also \cite{figalli_2018}), without making use of Proposition \ref{lemma-key}. We choose to avoid using such a deep result.
\section{Gluing lemmas}
\hspace*{1.5em}The main goal of this section is to establish a preparatory step (Lemma \ref{lemma-technical} below) towards the proof of the remaining part of Theorem \ref{thm-main} that will allow us to deduce that if $K$ is a non-spherical convex body that solves (\ref{eq-thm-main}), then there exists a non-spherical centrally symmetric convex body $\overline{K}$ that approximately solves (\ref{eq-thm-main}). This will be done by gluing together part of the boundary of $K$, part of the boundary of $-K$ and appropriate spherical parts. Since the barycentre of any centrally symmetric convex body is the origin, it will then not be hard to finish the proof of Theorem \ref{thm-main}. Let us first fix some notation that will be used until the end of this note.

Let $e_1=(1,0,\dots,0)$ and $e_2=(0,1,0,\dots,0)$ be the first two vectors of the standard orthonormal basis in $\Rn$ and $K$ be a convex body which is symmetric with respect to the $x_2$-axis (i.e. the line $\R e_2$). We denote by $E$, the subspace spanned by $e_1$ and $e_2$, which we identify with $\R^2$. We set $Q(E)\subseteq \R^2$ to be the planar convex body $K\cap E$. 

If $T$ is a strictly convex body in $\R^2$ and $\nu\in\mathbb{S}^1$, $p_T(\nu)$ will stand for the (unique) point in $\bd T$ such that the supporting line of $T$, whose outer unit normal vector is $\nu$, touches $T$ at $p_T(\nu)$. Notice that if $T$ is also assumed to be regular, then $p_T=\eta^{-1}_T$ and, therefore, $p_T$ is a continuous map. 

We will need the following observation.     
 
\begin{lemma}\label{prop-gluing}
Let $T_1,T_2$ be regular strictly convex bodies in $\R^2$ and let $p,\ q$ be two distinct intersection points of $\bd T_1$ and $\bd T_2$. Let $l$ be the straight line through $p$ and $q$ and let $l^+$, $l^-$ be the two closed half-planes defined by $l$. Assume, furthermore, that the tangent lines of $T_1$ and $T_2$ respectively, coincide. Then, the set $T:=(T_1\cap l^+)\cup (T_2\cap l^-)$ is also a regular strictly convex body.
\end{lemma}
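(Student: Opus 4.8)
The plan is to verify the three defining properties of a regular strictly convex body for $T := (T_1 \cap l^+) \cup (T_2 \cap l^-)$: convexity, strict convexity, and regularity (boundary of class $C^1$). First I would set up notation: let $\nu_0 \in \mathbb{S}^1$ be the common outer unit normal at $p$ of both $T_1$ and $T_2$ (so $p = p_{T_1}(\nu_0) = p_{T_2}(\nu_0)$ since the tangent lines coincide there), and similarly let $\nu_1$ be the common outer normal at $q$. Choose the labelling so that $l^+$ is the half-plane into which $T_1$ ``bulges'' near the relevant arc; more precisely, the chord $[p,q]$ divides each $T_i$ into two arcs, and $T_1 \cap l^+$ is one of the two caps of $T_1$ cut off by $l$, while $T_2 \cap l^-$ is the complementary cap of $T_2$. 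I would first check $T$ is closed, bounded, has nonempty interior, and that $\operatorname{bd} T$ consists of the arc $A_1 := \operatorname{bd} T_1 \cap l^+$ together with the arc $A_2 := \operatorname{bd} T_2 \cap l^-$, these two arcs meeting exactly at $p$ and $q$.

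For convexity, the cleanest route is to describe $T$ via support functions on the two ``sides''. Each $T_i$ being regular and strictly convex, $\operatorname{bd} T_i$ is parametrized homeomorphically by the outer normal $\nu \mapsto p_{T_i}(\nu)$. The chord $l$ through $p,q$ has its own unit normal, say $w$, pointing from $l^-$ into $l^+$; the two boundary points $p, q$ are exactly where $\operatorname{bd} T_i$ crosses $l$. I claim that on the arc of normals from $\nu_0$ to $\nu_1$ (going the ``short way'', i.e. the way that traces out $A_1$), we have $\langle p_{T_1}(\nu), w\rangle \ge \langle p, w \rangle$ (the arc lies in $l^+$), and likewise $A_2$ lies in $l^-$; this is just strict convexity of each $T_i$ together with the fact that $p, q$ are the only intersection points with $l$. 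Then $T$ is the intersection of the half-plane $l^+$-side data of $T_1$ with the $l^-$-side data of $T_2$, glued along $l$; convexity follows because a set is convex iff it lies on one side of each of its supporting lines, and for $T$ the candidate supporting lines are: the tangent lines of $T_1$ along $A_1$ (which support $T_1 \cap l^+ \subseteq T_1$, and support $T_2 \cap l^-$ because... here is where I need the hypothesis) — so let me instead argue directly. Alternatively, and more robustly: $T_1 \cap l^+$ and $T_2 \cap l^-$ are both convex; their union is convex provided that the tangent line to $T_1$ at $p$ equals the tangent line to $T_2$ at $p$ and likewise at $q$, because then along the seam there is no ``reflex'' corner. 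Concretely, any line segment with one endpoint in $T_1 \cap l^+$ and the other in $T_2 \cap l^-$ crosses $l$ at a point of the segment $[p,q] \subseteq T_1 \cap T_2$ (since both caps contain $[p,q]$ on their boundary's chord, $[p,q]$ lies in both), and each half of the segment lies in the respective convex piece — this shows convexity of the union directly, \emph{once} we know $[p,q] \subseteq T_1 \cap l^+$ and $[p,q]\subseteq T_2 \cap l^-$, which holds because $[p,q] \subseteq l$ and $[p,q]\subseteq T_i$ by convexity of $T_i$. So in fact convexity of $T$ does not even need the tangency hypothesis.

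Strict convexity: $\operatorname{bd} T = A_1 \cup A_2$, and $A_i$ is a sub-arc of $\operatorname{bd} T_i$, which contains no segment since $T_i$ is strictly convex; so $\operatorname{bd} T$ contains no segment unless a segment straddles $p$ or $q$ — but a segment through $p$ lying in $\operatorname{bd} T$ would have to lie in $\operatorname{bd} T_1$ on one side of $l$ and in $\operatorname{bd} T_2$ on the other, forcing both tangent lines at $p$ to equal the line of that segment, which is fine, \emph{but} then that line would contain an interior-of-$T_1$-side sub-segment contradicting strict convexity of $T_1$ — so no such segment exists. This is exactly where the tangency hypothesis becomes essential, and it is also the crux of regularity: at the seam point $p$, the left tangent direction of $\operatorname{bd} T$ (coming along $A_2 \subseteq \operatorname{bd} T_2$) and the right tangent direction (coming along $A_1 \subseteq \operatorname{bd} T_1$) agree precisely because the tangent lines of $T_1$ and $T_2$ at $p$ coincide by hypothesis; hence $\operatorname{bd} T$ is $C^1$ at $p$, and likewise at $q$, while at all other boundary points $C^1$-ness is inherited from $T_1$ or $T_2$. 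I expect the main obstacle to be purely bookkeeping: pinning down the correct half-plane labelling and verifying carefully that the two arcs $A_1, A_2$ fit together to form \emph{all} of $\operatorname{bd} T$ with no overlaps or gaps (using that $p,q$ are the \emph{only} two intersection points — here strict convexity of the $T_i$ is what guarantees $\operatorname{bd} T_1 \cap \operatorname{bd} T_2$ near $p,q$ behaves well and that each cap is a single arc). Once that picture is nailed down, convexity, strict convexity, and $C^1$-regularity all follow from the corresponding properties of $T_1,T_2$ plus the tangency matching at the two seam points. Finally, I would note $h_T = \min\{h_{T_1}, h_{T_2}\}$ on the appropriate normal cones is not quite the right formula; instead the relation $p_T(\nu) = p_{T_1}(\nu)$ for $\nu$ in the (closed) normal arc from $\nu_0$ to $\nu_1$ through the $A_1$-side and $p_T(\nu) = p_{T_2}(\nu)$ on the complementary arc gives a continuous inverse Gauss map $p_T$, which re-proves regularity and strict convexity in one stroke and is perhaps the slickest way to write it up.
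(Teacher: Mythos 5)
Your treatment of the seam points (strict convexity and $C^1$-regularity at $p$ and $q$ via the common tangent lines) is in the right spirit, but the central step --- convexity of $T$ --- has a genuine gap, and the way you frame it is actually false. You assert that any segment with one endpoint $a\in T_1\cap l^+$ and the other $b\in T_2\cap l^-$ crosses $l$ at a point of $[p,q]$, and you conclude that ``convexity of $T$ does not even need the tangency hypothesis.'' The crossing point lies in $T_1\cap l=[p,q]$ or $T_2\cap l=[p,q]$ only if the segment stays inside one of the two bodies, which is not given; the parenthetical you offer only shows $[p,q]\subseteq T$, not that the segment meets $l$ inside $[p,q]$. And convexity genuinely can fail without tangency: take $T_1$ the unit disc, $T_2$ a long thin ellipse (semi-axes $2$ and $\varepsilon$), and let $p=(x_0,y_0)$, $q=(x_0,-y_0)$ be the two nearby intersection points of the boundaries close to $(1,0)$, so $l=\{x=x_0\}$. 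With $l^+=\{x\le x_0\}$ and $l^-=\{x\ge x_0\}$, the set $(T_1\cap l^+)\cup(T_2\cap l^-)$ (most of the disc glued to the thin tip of the ellipse) is not convex: the segment from $(0,1)\in T_1\cap l^+$ to $(2,0)\in T_2\cap l^-$ crosses $l$ at height about $1/2$, far outside $[p,q]$ and outside the union. So the tangency hypothesis must enter precisely in the convexity step, which your argument as written never uses.

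The repair is close to what the paper does. Since the tangent line $l_p$ of $T_1$ at $p$ is also tangent to $T_2$ at $p$ (and likewise $l_q$ at $q$), both bodies lie in $l_p^-\cap l_q^-$, where $l_p^-$, $l_q^-$ are the supporting half-planes; hence any segment $[a,b]$ as above lies in $l_p^-\cap l_q^-$, and since $l\cap l_p^-\cap l_q^-=[p,q]$ (note $l\neq l_p,l_q$ because $(p,q)\subseteq\inter T_1$), its crossing with $l$ does land in $[p,q]$, after which your ``split the segment at the crossing point'' argument goes through. This is essentially the paper's mechanism, though the paper organizes it differently: it shows that through every boundary point of $T$ there is a tangent line missing $\inter T$, the key case being a tangent line $l_x$ of $T_1$ at a point $x$ of the $l^+$ arc with $x\neq p,q$, which is shown to miss $T_2\cap l^-$ by comparing with the region $l^+\cap l_p^-\cap l_q^-$ bounded by $l$ and the two common tangents. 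Either way, the common tangents at $p,q$ acting as supporting lines of \emph{both} bodies is the missing ingredient in your convexity argument; your remaining points (no segment in $\bd T$ through $p$ or $q$, matching one-sided tangents at the seams, bookkeeping that $\bd T=A_1\cup A_2$) are fine once convexity is in place.
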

\begin{proof}
Clearly, for every point $x\in\bd T$, the tangent line $l^T_x$ of $\bd T= ((\bd T_1)\cap l^+)\cup (\bd T_2\cap l^-)$ exists (this is trivially true if $x\neq p,q$, while if $x=p$ (and similarly if $x=q$) it follows from the fact that the tangent lines of $T_1$ and $T_2$ at $p$ coincide). Thus, it suffices to show that for each point $x\in \bd T$, it holds $l^T_x\cap \inter T=\emptyset$.

Let $x\in \bd T$. We may assume that $x\in (\bd T_1)\cap l^+$. Let $l_x=l_x^{T_1}$ be the tangent line of $T_1$ at $x$. Denote by $l_x^-$ the closed half-plane defined by $l_x$, that contains $T_1$ and set $l'_x:=\R^2\setminus l_x^-$ (the interior of the other closed half-plane). If $x=p$ or $x=q$, our assumption implies immediately that $l_x\cap \inter T=\emptyset$. Therefore, we may assume that $x\neq p,q$. Since it is trivially true that $l_x\cap \inter T_1=\emptyset$, it suffices to show that $l_x\cap T_2\cap l^-=\emptyset$.

Set $A$ to be the closed convex set $l^+\cap l_p^-\cap l_q^-$. The line segment $[p,q]$ is obviously one of the three sides (possibly unbounded) of $A$. Since $l_x$ is  disjoint from $[p,q]$, it follows that $A\setminus l_x$ is contained in $l_p'\cap l_q'$. However, $T_2\cap l^-$ is contained in $l_p^-\cap l_q^-\cap l^-\subseteq (\R^2\setminus A)\cup [p,q]$, thus $l_x\cap T_2\cap l^-$ is empty, as claimed.
\end{proof}
Now we are ready to state and prove the main technical fact of this section.
\begin{lemma}\label{lemma-technical}
Let $K$ be a regular strictly convex body in $\Rn$ (in particular, $S_K$ is absolutely continuous with respect to $\Ha$), which is symmetric with respect to the $x_2$-axis and let $B_1:=r_1B_2^n$, $B_2:=r_2B_2^n$, for some $0<r_1<r_2$. Assume, furthermore, that for some open set $U\subseteq \Sn$, $K$ satisfies
\begin{equation}\label{eq-lemma-technical-1}
f_K(v)=G(h_K(v)),\qquad\textnormal{for almost every }v\in\Sn\setminus U,
\end{equation}for some $G\in{\cal A}(n)$.
If $T:=Q(K)$, then the following are true.
\begin{enumerate}[i)]
\item If $p_T(1,0)$ lies on the $x_1$-axis, then there exists a centrally symmetric strictly convex body $\overline{K}$ in $\Rn$, such that $K\cap \{x\in \Rn:x_2\geq 0\}= \overline{K}\cap \{x\in \Rn:x_2\geq 0\}$, satisfying
\begin{equation}\label{eq-lemma-technical-2}
f_{\overline{K}}(v)=G(h_{\overline{K}}(v)),\qquad\textnormal{for almost every }v\in \Sn\setminus (U\cup U'),
\end{equation}
where $U'\subseteq \Sn$ is the reflection of $U$ with respect to the hyperplane $e_2^\perp$.
\item Assume that $U=\emptyset$ and that there exist two distinct vectors $\nu_1,\nu_2\in\Sen\cap \R_+^2$, such that $p_T(\nu_i)=p_{Q(B_i)}(\nu_i)$, $i=1,2$. Assume, furthermore, that
\begin{description}
\item [a)] $r_1<h_{T}(\nu)<r_2$, for all $\nu\in \Sen$ with $\min\{\langle\nu_1,e_2\rangle,\langle\nu_2,e_2\rangle\}\leq \langle\nu,e_2\rangle\leq \max\{\langle\nu_1,e_2\rangle,\langle\nu_2,e_2\rangle\}$.
\item [b)] $r_1^{n-1}=f_{B_1}(\nu_1)\leq G(r_1)$ and $r_2^{n-1}=f_{B_2}(\nu_2)\geq G(r_2)$.
\end{description}
Then, there exists a family of functions $\{G_\varepsilon\}_{\varepsilon\in(0,\varepsilon_0)}\subseteq {\cal A}(n)$, which is uniformly bounded on $(0,\varepsilon_0)\times [r_1,r_2]$ and a centrally symmetric strictly convex body $\overline{K}$ in $\Rn$, which is not an ellipsoid, satisfying
\begin{equation}\label{eq-lemma-technical-3} 
f_{\overline{K}}(v)=G_\varepsilon(h_{\overline{K}}(v)),\qquad\textnormal{for almost every }v\in \Sn\setminus U_\varepsilon,
\end{equation}for all $\varepsilon\in(0,\varepsilon_0)$, where $\{U_\varepsilon\}$ is a family of open sets in $\Sn$ such that $\Ha(U_\varepsilon)\xrightarrow{\varepsilon\to 0^+}0$.
\end{enumerate}
\end{lemma}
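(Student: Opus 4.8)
The idea in both parts is the same: we surgically replace part of the boundary of the axially symmetric body $K$ by its mirror image (part (i)), or by the mirror image glued together with appropriate spherical caps (part (ii)), and invoke Lemma \ref{prop-gluing} to guarantee that the resulting body is still regular and strictly convex. The Monge-Amp\`ere identity $f=G(h)$ is automatically preserved on the pieces inherited from $K$ and from the spheres $B_1,B_2$ (after adjusting $G$ on a short interval), so the only places where it can fail are the "seams'' and the caps, which have measure zero or small measure respectively.

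\medskip

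For part (i): since $K$ is symmetric with respect to the $x_2$-axis, the profile $T=Q(K)$ is symmetric with respect to the $x_2$-axis as well, and the hypothesis that $p_T(1,0)$ lies on the $x_1$-axis means the "equatorial'' point of $K$ in the $e_1$-direction sits on $e_1^\perp$; combined with the axial symmetry this forces the whole hyperplane section $K\cap e_2^\perp$ to be a Euclidean ball in $e_2^\perp$ centered at $o$. First I would define
$$\overline{K}:=\bigl(K\cap\{x_2\geq 0\}\bigr)\cup\bigl(-K\cap\{x_2\leq 0\}\bigr)=\bigl(K\cap\{x_2\geq 0\}\bigr)\cup\bigl(\sigma(K)\cap\{x_2\leq 0\}\bigr),$$
where $\sigma$ is reflection in $e_2^\perp$ (these two descriptions agree on the $\{x_2\leq 0\}$ part because $K$ is symmetric in $\R e_2$, so $-K$ and $\sigma(K)$ have the same lower half). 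The reason $\overline{K}$ is a well-defined convex body with $C^1$ boundary is Lemma \ref{prop-gluing}, applied on each $2$-dimensional profile through $\R e_2$: the two pieces $T\cap\{x_2\geq0\}$ and $\sigma(T)\cap\{x_2\leq 0\}$ meet along the $x_1$-axis, the meeting points are $p_T(\pm(1,0))$ which lie on that axis by hypothesis, and since $p_T(1,0)$ is a point where the tangent line is vertical for both $T$ and its reflection $\sigma(T)$, the tangent lines agree there; hence Lemma \ref{prop-gluing} gives a regular strictly convex profile, and rotating about $\R e_2$ gives that $\overline{K}$ is regular and strictly convex. Centrally symmetric is immediate: reflecting $\overline{K}$ through $o$ swaps the two defining pieces. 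Finally, $f_{\overline{K}}=G(h_{\overline{K}})$ a.e.\ off $U\cup U'$: on the open upper hemisphere $\overline{K}$ and $K$ coincide, so the identity holds a.e.\ there off $U$ by \eqref{eq-lemma-technical-1}; on the open lower hemisphere $\overline{K}$ coincides with $\sigma(K)$, and since $h_{\sigma(K)}(v)=h_K(\sigma v)$ and $f_{\sigma(K)}(v)=f_K(\sigma v)$, \eqref{eq-lemma-technical-1} transported by $\sigma$ gives the identity a.e.\ off $U'$; the equator $e_2^\perp\cap\Sn$ has $\Ha$-measure zero. This proves (i).

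\medskip

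For part (ii) ($U=\emptyset$): here we cannot simply glue $K$ to $-K$, because the "equatorial'' point need not sit on $e_1^\perp$; instead we first cut off a polar cap of $K$ near the $e_2$-direction and replace it by a cap of the sphere $B_2$, and cut off a band near the $e_1$-equator and replace it by a band of $B_1$, choosing the gluing to happen exactly at the normals $\nu_2$ and $\nu_1$ where $\bd T$ already passes through $\bd Q(B_2)$, $\bd Q(B_1)$ with the same tangent line. More precisely, on the profile $T$: let $\nu_2$ be the normal closer to $e_2$; the ray of $\bd T$ with normals "above'' $\nu_2$ (closer to $e_2$) is replaced by the corresponding arc of $\partial Q(B_2)$, and the ray with normals "below'' $\nu_1$ (closer to $e_1$, i.e.\ closer to the equator) is replaced by the arc of $\partial Q(B_1)$; hypothesis (a) guarantees $r_1<h_T<r_2$ strictly on the intermediate range of normals so that the replaced sphere arcs genuinely lie outside, resp.\ inside, and the three arcs fit together convexly, and Lemma \ref{prop-gluing} (applied twice, at $\nu_2$ and at $\nu_1$, and also at their mirror images to close up the profile, using that $p_T=p_{Q(B_i)}$ there forces coincident tangent lines) produces a regular strictly convex profile $\overline{T}$. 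Rotating $\overline{T}$ about $\R e_2$ and then gluing the upper half to its reflection through $o$ — which is legitimate because the new profile now passes through $\partial Q(B_1)$ near the equator, so its equatorial point does lie on $e_1^\perp$ — yields a centrally symmetric regular strictly convex body $\overline{K}$; it is not an ellipsoid because it contains a genuine spherical cap of $B_1$ (of positive surface measure) while an ellipsoid that is not a ball has no spherical piece, and if $r_1\ne r_2$ it cannot be the ball $r_1B_2^n$ itself since it also contains a cap of $B_2$. Now the curvature equation: on the part of $\bd\overline{K}$ inherited from $K$ we have $f_{\overline{K}}=f_K=G(h_K)=G(h_{\overline{K}})$; on the $B_2$-cap, $f_{\overline{K}}=r_2^{n-1}$ which by (b) is $\geq G(r_2)$, and $h_{\overline{K}}=r_2$ there; on the $B_1$-band, $f_{\overline{K}}=r_1^{n-1}\leq G(r_1)$ and $h_{\overline{K}}=r_1$ there. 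So $f_{\overline{K}}$ equals an ${\cal A}(n)$ function composed with $h_{\overline{K}}$ except that at the constant levels $\theta=r_1$ and $\theta=r_2$ the value $G$ "should'' take is $r_1^{n-1}$ and $r_2^{n-1}$ rather than $G(r_1),G(r_2)$. This is precisely the situation of Lemma \ref{lemma-facts-G_varepsilon} with $c_1=r_1$, $c_2=r_2$, $a_1=r_1^{n-1}$, $a_2=r_2^{n-1}$: the hypotheses $G(c_1)\geq a_1$, $G(c_2)\leq a_2$ are exactly (b), so we obtain the uniformly bounded family $\{G_\varepsilon\}\subseteq{\cal A}(n)$ with $G_\varepsilon(r_i)=r_i^{n-1}$ and $G_\varepsilon=G$ off the two short intervals $[r_1+\varepsilon,r_2+\varepsilon]$ — wait, off $[r_1,r_1+\varepsilon]\cup[c_2-\varepsilon,c_2]$, i.e.\ on $[r_1+\varepsilon,r_2-\varepsilon]$. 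Then $f_{\overline{K}}(v)=G_\varepsilon(h_{\overline{K}}(v))$ holds a.e.\ except on the set $U_\varepsilon:=\{v:h_{\overline{K}}(v)\in[r_1,r_1+\varepsilon]\cup[r_2-\varepsilon,r_2]\}$ of normals where $G_\varepsilon$ has been modified, together with the two equatorial $\Ha$-null great subspheres; since $h_{\overline{K}}$ is continuous and $\{h_{\overline{K}}=r_1\}$, $\{h_{\overline{K}}=r_2\}$ are the boundaries of the caps, $\Ha(U_\varepsilon)\to 0$ as $\varepsilon\to 0^+$ by dominated convergence. This establishes \eqref{eq-lemma-technical-3}.

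\medskip

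\textbf{Main obstacle.} The routine parts are the measure-zero bookkeeping on the seams and the invocation of Lemmas \ref{prop-gluing} and \ref{lemma-facts-G_varepsilon}. The genuinely delicate point is verifying that the surgically assembled profile is actually \emph{convex} — i.e.\ that the arc of $\partial Q(B_2)$ we splice in near the pole stays outside the corresponding arc of $T$ over the whole range of relevant normal directions, and likewise the arc of $\partial Q(B_1)$ stays inside, so that no "dents'' are created at the seams and the global body remains convex after the rotation and the central gluing. This is exactly where hypothesis (a), $r_1<h_T(\nu)<r_2$ on the intermediate band of directions, is used: it pins the support function of $T$ strictly between those of $Q(B_1)$ and $Q(B_2)$, which (together with the coincidence of support points and tangent lines at $\nu_1,\nu_2$) is what makes the spliced arcs lie on the correct side and the union convex. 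Making this convexity argument airtight — phrasing it cleanly in terms of support functions on $\Sen$ and then checking that rotating about $\R e_2$ and reflecting through $o$ preserves it — is the part that needs the most care.
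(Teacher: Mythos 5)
Your construction is essentially the paper's: part (i) is the same gluing of $T\cap\{x_2\ge 0\}$ with $-T\cap\{x_2\le 0\}$ via Lemma \ref{prop-gluing}, and part (ii) splices spherical arcs into the profile at $\nu_1,\nu_2$ (where coincidence of support points at a common normal forces coincident tangents), adjusts $G$ near $r_1,r_2$ via Lemma \ref{lemma-facts-G_varepsilon} with $c_i=r_i$, $a_i=r_i^{n-1}$, and then reduces to part (i) because the modified profile meets the $x_1$-axis along a centered circular arc. Two slips in your write-up should be repaired. First, the normalization ``let $\nu_2$ be the normal closer to $e_2$'' is not legitimate, since $\nu_i$ is tied to the ball of radius $r_i$ by $p_T(\nu_i)=p_{Q(B_i)}(\nu_i)$ and $r_1<r_2$ is fixed; in the intended application either ball may touch nearer the pole, so you must run the (identical) construction in both orderings, gluing near the pole whichever ball corresponds to the normal with larger $\langle\cdot,e_2\rangle$ --- this is exactly what the paper's primed notation $\nu_i',B_i'$ does. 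Second, your exceptional set $U_\varepsilon=\{v:h_{\overline K}(v)\in[r_1,r_1+\varepsilon]\cup[r_2-\varepsilon,r_2]\}$ contains, for every $\varepsilon$, the sets $\{h_{\overline K}=r_1\}$ and $\{h_{\overline K}=r_2\}$, which are precisely the normal directions of the spherical pieces and in general have fixed positive $\Ha$-measure; as defined, $\Ha(U_\varepsilon)$ does not tend to $0$, nor is $U_\varepsilon$ open. The fix is to use strict inequalities, $U_\varepsilon:=\{r_1<h_{\overline K}<r_1+\varepsilon\}\cup\{r_2-\varepsilon<h_{\overline K}<r_2\}$: on $\{h_{\overline K}=r_i\}$ the identity already holds because $G_\varepsilon(r_i)=r_i^{n-1}=f_{\overline K}$ there, and the open sets shrink to $\emptyset$, which is how the paper defines $V_\varepsilon$.

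One more remark on your ``main obstacle'': no outside/inside comparison of the spherical arcs with $\bd T$ is needed for convexity. Lemma \ref{prop-gluing} alone guarantees that each splice is a regular strictly convex body, using only the two common boundary points with common tangent lines on the cutting chord. Hypothesis (a) is used for a different purpose: it guarantees that on the portion of the boundary inherited from $K$ the support function takes values strictly between $r_1$ and $r_2$ (so that $G_\varepsilon=G$ applies there off the small set $U_\varepsilon$, and the piecewise identity \eqref{eq-lemma-technical-3} closes up), and it guarantees that $\overline K$ is not a ball, hence (having a spherical cap) not an ellipsoid.
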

\begin{proof} (i) Since $p_T(1,0)$ lies on the $x_1$-axis and since $T$ is symmetric with respect to the $x_2$-axis, it holds $p_T(1,0)=p_{-T}(0,1)$ and $p_T(-1,0)=p_{-T}(-1,0)$. Thus, by Lemma \ref{prop-gluing}, the set $\overline{T}:=\left(T\cap\{x\in\R^2:x_2\ge 0\}\right)\cup \left(-T\cap \{x\in\R^2:x_2\le 0\}\right)$ is a (centrally symmetric) strictly convex body. Let $\overline{K}$ be the centrally symmetric strictly convex body in $\Rn$ obtained by rotating $\overline{T}$ about the $x_2$-axis (this is if $n\ge 3$, otherwise we set $\overline{K}:=\overline{T}$). Then, $K\cap \{x\in \Rn:x_2\geq 0\}= \overline{K}\cap \{x\in \Rn:x_2\geq 0\}$. Moreover, it holds
$$h_{\overline{K}}(v)=\begin{cases}h_K(v),& \langle v,e_2\rangle\geq 0\\
h_K(-v),& \langle v,e_2\rangle\leq 0\end{cases},\qquad v\in\Sn$$
and by (\ref{eq-F_K}), it follows that 
$$f_{\overline{K}}(v)=\begin{cases}f_K(v),& \langle v,e_2\rangle\geq 0\\
f_K(-v),& \langle v,e_2\rangle< 0\end{cases},\qquad \textnormal{for almost every }v\in\Sn.$$
The formulae written above for $h_{\overline{K}}$ and $f_{\overline{K}}$, together with (\ref{eq-lemma-technical-1}), immediately give (\ref{eq-lemma-technical-2}). \\
(ii) Set $\nu_1':=\nu_1$, $\nu_2':=\nu_2$, $B_1':=B_1$, $B_2'=B_2$, if $\langle \nu_1,e_2\rangle<\langle \nu_2,e_2\rangle$ and $\nu_1':=\nu_2$, $\nu_2':=\nu_1$, $B_1':=B_2$, $B_2'=B_1$, otherwise. Let, also, $D_i:=Q(B'_i)$, $i=1,2$. Moreover, set $\nu_i''$ to be the reflection of $\nu_i'$ with respect to the $x_2$-axis, $i=1,2$. Finally, let $l_i=\{x\in\R^2:x_2=t_i\}$ to be the line through the points $p_T(\nu_i')$ and $p_{T}(\nu_i'')$, for some $t_i\geq 0$ and let $l_i^+:=\{x\in\R^2:x_2\geq t_i\}$, $l_i^+:=\{x\in\R^2:x_2\leq t_i\}$, $i=1,2$. Applying Lemma \ref{prop-gluing} twice, we conclude that the set 
$$T':=(D_2\cap l_2^+)\cup (T\cap l_2^-\cap l_1^+)\cup (D_1\cap l_1^-)$$is a regular strictly convex body in $\R^2$ that contains the origin in its interior. If $K'$ is the (regular strictly convex) body of revolution in $\Rn$ (if $n=2$, set again $K':=T'$), obtained by rotating $T'$ about the $x_2$-axis, then as in the proof of part (i), we find
$$h_{K'}(v)=\begin{cases}
h_{B_1'}(v),& \langle v,e_2\rangle \leq \langle \nu_1',e_2\rangle\\
h_{K}(v),& \langle \nu_1',e_2\rangle \leq \langle v,e_2\rangle\leq \langle \nu_2',e_2\rangle\\
h_{B_2'}(v),& \langle v,e_2\rangle \geq \langle \nu_2',e_2\rangle
\end{cases},\qquad v\in\Sn$$ 
and
$$f_{K'}(v)=\begin{cases}
f_{B_1'}(v),& \langle v,e_2\rangle \leq \langle \nu_1',e_2\rangle\\
f_{K}(v),& \langle \nu_1',e_2\rangle <\langle v,e_2\rangle\leq \langle \nu_2',e_2\rangle\\
h_{B_2'}(v),& \langle v,e_2\rangle > \langle \nu_2',e_2\rangle
\end{cases},\qquad \textnormal{for almost every }v\in\Sn.$$Notice that the formulae above are still valid even if $\nu_2'=e_2$. 

Because of assumption (b), by Lemma \ref{lemma-facts-G_varepsilon}, there exists a uniformly bounded (in $(0,\varepsilon_0)\times[r_1,r_2]$) family of functions $\{G_\varepsilon\}_{\varepsilon\in(0,\varepsilon_0)}\subseteq {\cal A}(n)$, such that $G_\varepsilon(r_i)=r_i^{n-1}$, $i=1,2$ and for all $\varepsilon\in(0,\varepsilon_0)$, it holds $G_\varepsilon(\theta)=G(\theta)$, for all $\theta\in [r_1+\varepsilon,r_2-\varepsilon]$. Using assumption (a), (\ref{eq-lemma-technical-1}) and the representation for $h_{K'}$ and $f_{K'}$ derived above, we immediately conclude that 
$$f_{K'}(v)=G_\varepsilon(h_{K'}(v)),\qquad\textnormal{for almost all }v\in\Sn\setminus V_\varepsilon,$$
where $V_\varepsilon:=\{v\in\Sn:r_1<h_{K'}(v)<r_1+\varepsilon\}\cup\{v\in\Sn:r_2-\varepsilon<h_{K'}(v)<r_2\}$. Notice that $V_\varepsilon\searrow \emptyset$ and, therefore, $\Ha(V_\varepsilon)\xrightarrow{\varepsilon\to 0^+} 0$. Since $p_{Q(K')}(1,0)=p_{D_1}(1,0)$ lies on the $x_1$-axis, the existence of a non-spherical (this follows from assumption (a)) centrally symmetric strictly convex body $\overline{K}$ satisfying (\ref{eq-lemma-technical-3}) follows from part (i). Additionally, $K'$ and hence $\overline{K}$ contains a spherical part on its boundary (but is not spherical itself) and, consequently, $\overline{K}$ cannot be an ellipsoid.
\end{proof}
\section{Further restriction of the class of candidates and proof of Theorem \ref{thm-main}}
\hspace*{1.5em}The proof of the remaining part of Theorem \ref{thm-main} will follow from the next proposition (which actually provides some extra information about solutions of (\ref{eq-thm-main}), in the case that $G$ is not necessarily strictly monotone). Before we state it, it will be convenient to introduce a standard reparametrization of $h_T$, where $T$ is a planar convex body. More specifically, we set
$$\overline{h}_T(\varphi):=h_T(\cos \varphi,\sin\varphi),\qquad \varphi\in \R.$$
\begin{proposition}\label{prop-more}
Let $K$ be a convex body in $\Rn$ that contains the origin in its interior, solves (\ref{eq-thm-main}) for some function $G\in {\cal A}(n)$ and is symmetric with respect to the $x_2$-axis. Then, the following hold.
\begin{enumerate}[i)]
\item $\overline{h}_{Q(K)}$ is monotone on $[-\pi/2,\pi/2]$.
\item If $K$ is not a Euclidean ball centered at the origin, then $\overline{h}_{Q(K)}$ is strictly monotone in a neighbourhood of $\varphi=0$.
\end{enumerate}
\end{proposition}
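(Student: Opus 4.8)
\textbf{Proof strategy for Proposition \ref{prop-more}.}

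The plan is to reduce everything to the planar profile $T:=Q(K)$ and exploit the two-dimensional version of the Monge-Amp\`ere equation together with the gluing machinery of Section~6. Write $g(\varphi):=\overline{h}_T(\varphi)$; since $K$ is a body of revolution symmetric about the $x_2$-axis, Corollary \ref{cor-regularity} tells us $g$ is $C^1$ on $[-\pi/2,\pi/2]$, and the equation (\ref{eq-thm-main}) restricted to directions in $E$, combined with the rotational symmetry, turns (\ref{eq-monge-ampere}) into a single second-order ODE of the form $(g''(\varphi)+g(\varphi))\,(\text{something involving } g(\varphi)\text{ and } \varphi)^{\,n-2}=G(g(\varphi))$, valid in the weak sense. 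The key structural point is that at a point $\varphi_0$ where $g'(\varphi_0)=0$ the supporting line at $p_T(\cos\varphi_0,\sin\varphi_0)$ is ``radial-perpendicular'', i.e.\ $p_T$ of that direction lies on the ray $\R_+(\cos\varphi_0,\sin\varphi_0)$ reflected appropriately — more precisely, $g'(\varphi)=0$ is exactly the condition that the boundary point $p_T(\cos\varphi,\sin\varphi)$ has radial direction orthogonal to $(\cos\varphi,\sin\varphi)$ wait — rather, $g'(\varphi)$ is the tangential component of $p_T$, so $g'(\varphi_0)=0$ means $p_T(\cos\varphi_0,\sin\varphi_0)=g(\varphi_0)(\cos\varphi_0,\sin\varphi_0)$ lies on the corresponding radius. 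This is precisely the hypothesis needed to apply Lemma \ref{lemma-technical}(i) with $U=\emptyset$, producing a centrally symmetric strictly convex $\overline{K}$ agreeing with $K$ on a half-space and solving (\ref{eq-thm-main}) for the \emph{same} $G$.

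For part (i), suppose $g$ is not monotone on $[-\pi/2,\pi/2]$. Then $g$ has an interior local extremum, and more is true: because $g$ is $C^1$, if it is non-monotone there are two points $0\le \varphi_1<\varphi_2$ (or symmetric ones) with $g'=0$ at the endpoints of a genuine ``bump''; by the symmetry $g(\varphi)=g(-\varphi)$ we can even arrange a point $\varphi_0\in(-\pi/2,\pi/2)$, $\varphi_0\ne 0$, with $g'(\varphi_0)=0$ and $g$ non-constant near $\varphi_0$ — actually the cleanest route is: $g'(0)=0$ always (by symmetry), so non-monotonicity forces another zero $\varphi_0\ne 0$ of $g'$ with $g$ not identically equal to a constant on $[0,\varphi_0]$. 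At such a $\varphi_0$, $p_T(\cos\varphi_0,\sin\varphi_0)$ lies on its radius, so Lemma \ref{lemma-technical}(i) applies after rotating the coordinate frame so that this direction becomes $(1,0)$; the output $\overline{K}$ is centrally symmetric, strictly convex, solves (\ref{eq-thm-main}) for $G\in\mathcal A(n)$, hence has $b(\overline{K})=o$, hence by Proposition \ref{lemma-key} is a Euclidean ball centered at the origin. But $\overline{K}$ coincides with $K$ on a half-space through the origin, so $K$ agrees with a centered ball on half of its boundary; by strict convexity and the equation this propagates — actually more directly, $h_K=h_{\overline K}$ on a half-sphere forces, via (\ref{eq-thm-main}), $G(h_K)\equiv$ the ball's density on that half-sphere, and then a uniqueness/continuation argument for the ODE forces $g$ constant on $[-\pi/2,\pi/2]$, contradicting non-monotonicity.

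For part (ii), run the same argument locally: if $g$ were \emph{not} strictly monotone near $\varphi=0$, then since $g'(0)=0$ and $g$ is monotone (by part (i)) on the whole interval, $g$ must be \emph{constant} on a one-sided neighbourhood of $0$, say $g\equiv c$ on $[0,\delta]$; by symmetry $g\equiv c$ on $[-\delta,\delta]$, so a spherical cap of radius $c$ sits on $\bd K$ around $\pm e_2$. Choosing a boundary point in this cap whose radius is perpendicular to its normal (which holds throughout a genuine spherical cap), apply Lemma \ref{lemma-technical}(i) again to get a centered-ball $\overline{K}$ coinciding with $K$ on a half-space, and conclude as before that $K$ is a centered ball, contradicting the hypothesis. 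The main obstacle in all of this is the bookkeeping around the weak/ODE formulation — making precise that $g$ being constant on a subinterval, or $\overline{h}$ matching a ball's support function on a half-sphere, genuinely forces $K$ to be a centered ball everywhere; this is where one invokes strict convexity (Corollary \ref{cor-regularity}), the absolute continuity of $S_K$, and a uniqueness statement for the rotationally symmetric Monge-Amp\`ere ODE with the given $C^1$ data, rather than anything delicate about $\mathcal A(n)$ itself.
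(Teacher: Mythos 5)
Your reduction of part (i) to Lemma \ref{lemma-technical}(i) breaks down at the step ``after rotating the coordinate frame so that this direction becomes $(1,0)$''. The gluing in Lemma \ref{lemma-technical}(i) reflects the half of $K$ lying above the hyperplane $e_2^\perp$ and needs $K$ to be symmetric with respect to the $x_2$-axis \emph{in that frame}: this is what guarantees $p_T(\pm 1,0)=p_{-T}(\pm 1,0)$ and the matching of tangent lines required by Lemma \ref{prop-gluing}, and, for $n\ge 3$, that the glued profile can be rotated back into a body of revolution. After a rotation carrying $(\cos\varphi_0,\sin\varphi_0)$ to $(1,0)$ with $\varphi_0\neq 0$, the symmetry axis is no longer the $x_2$-axis, the sections of $K$ and $-K$ by the new cutting hyperplane need not coincide, and the glued set need not even be convex. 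Moreover, even where a gluing at a non-equatorial latitude is possible (via spherical caps, as in Lemma \ref{lemma-technical}(ii)), the resulting body satisfies the equation only for a perturbed family $G_\varepsilon$ and off small exceptional sets, not ``for the same $G$'', because at a local extremum one only has the one-sided inequalities of Lemma \ref{lemma-local-extr}(ii). This is exactly why the paper's proof of (i) is not a reflection argument: it locates two extremal latitudes $\varphi_1,\varphi_2$ in $[\varphi_0,\pi/2]$, gets $r_1^{n-1}\le G(r_1)$ and $r_2^{n-1}\ge G(r_2)$ (or the reverse) from Lemma \ref{lemma-local-extr}(ii), applies Lemma \ref{lemma-technical}(ii) together with Lemma \ref{lemma-facts-G_varepsilon}, and then reaches a contradiction by playing the strict inequality of Lemma \ref{lemma-before-key}(ii) (for a centrally symmetric non-ellipsoid) against the lower bound coming from Lemma \ref{lemma-eq-l-st-s-der}; the latter is precisely where $G\in{\cal A}(n)$ is used, contrary to your closing remark that nothing delicate about ${\cal A}(n)$ is needed.

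Two further problems. The symmetry of $K$ with respect to the $x_2$-axis gives $\overline{h}_T(\varphi)=\overline{h}_T(\pi-\varphi)$, i.e.\ symmetry about $\varphi=\pi/2$, not $\overline{h}_T(\varphi)=\overline{h}_T(-\varphi)$; so your claim ``$g'(0)=0$ by symmetry'' is false, and with it the deduction in (ii) that failure of strict monotonicity forces constancy on a one-sided neighbourhood of $0$ (monotone but not strictly monotone near $0$ only yields intervals of constancy accumulating at $0$, possibly avoiding $0$). The correct route, which is the paper's and needs neither part (i) nor any symmetry of $g$ at $0$, is the dichotomy in Lemma \ref{lemma-local-extr}(i): if $\overline{h}_T$ is not strictly monotone near $0$, then $p_T(1,0)$ lies on the $x_1$-axis, so Lemma \ref{lemma-technical}(i) applies verbatim (no rotation), Proposition \ref{lemma-key} makes the glued centrally symmetric body a centered ball, and repeating the argument with $-K$ finishes; the paper in fact proves (ii) first and then uses it for (i). Finally, your appeal to ``a uniqueness/continuation argument for the rotationally symmetric Monge--Amp\`ere ODE'' is unjustified at the available regularity: the equation holds only almost everywhere, $h_K$ is only known to be $C^1$ (Corollary \ref{cor-regularity}) and $G$ is merely continuous, so no classical ODE uniqueness applies, and the paper's argument deliberately avoids any such step.
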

\noindent Proof of Theorem \ref{thm-main}.\\
It remains to assume that $G$ is monotone. It clearly suffices to show that $G(h_K)$ is constant on $\Sn$. By Proposition \ref{lemma-key}, we may assume that any solution $K$ to (\ref{eq-thm-main}) is symmetric with respect to the $x_2$-axis. Then, by Proposition \ref{prop-more}, $G(\overline{h}_K)$ is monotone. It follows immediately that for $u=e_2$ or $u=-e_2$, it holds $G(h_K(v))\leq G(h_K(v'))$, for any two vectors $v,v'\in\Sn$ with $\langle v,u\rangle\leq 0\leq \langle v',u\rangle$. This shows in particular that  
$$\int_{\Sn\cap \{x_2\le 0\}}-\langle v,u\rangle G(h_K(v))d\Ha(v)\leq\int_{\Sn\cap \{x_2\ge 0\}}\langle v',u\rangle G(h_K(v))d\Ha(v'),$$with equality if and only if $G(h_K)$ is constant on $\Sn$.
On the other hand, since the barycentre of $S_K$ is always at the origin, equality  must hold in the previous inequality and Theorem \ref{thm-main} is proved. $\square$ 
\\

Before we prove Proposition \ref{prop-more}, we will need some additional considerations.

\begin{proposition}\label{prop-appendix}
Let $L$, $M$ be regular strictly convex bodies in $\Rn$, such that $S_L=f_Ld\Ha$ and $S_M=f_Md\Ha$, for some continuous functions $f_L,f_M:\Sn\to(0,\infty)$. Assume, furthermore, that there is a point $p\in \bd L\cap \bd M$ and a neighbourhood $V$ of $p$ in $\bd L$, such that $V$ is contained in $M$. Let $\nu$ be the unit normal vector of $L$ (and therefore of $M$) at $p$. If at least one of $L$ and $M$ is of class $C^2_+$, then it holds $f_L(\nu)\leq f_M(\nu)$.\end{proposition}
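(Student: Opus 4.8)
\textbf{Proof plan for Proposition \ref{prop-appendix}.}

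The plan is to localize the problem near the common boundary point $p$ and compare the two bodies through their support functions in a neighbourhood of $\nu$ on $\Sn$. First I would set up coordinates so that $p$ is the origin of $\Rn$, $\nu = -e_n$ is the (common) outer unit normal, and the common tangent hyperplane at $p$ is $\{x_n=0\}$, with both bodies lying locally in $\{x_n\ge 0\}$. Near $p$ the boundaries of $L$ and $M$ are graphs $x_n = g_L(\bar x)$ and $x_n = g_M(\bar x)$ over a neighbourhood of $\bar x = 0$ in $\R^{n-1}$, with $g_L(0)=g_M(0)=0$ and $\nabla g_L(0)=\nabla g_M(0)=0$. The hypothesis that a neighbourhood $V$ of $p$ in $\bd L$ is contained in $M$ translates (since $M$ is convex and lies above its tangent hyperplane) into the pointwise inequality $g_L(\bar x)\ge g_M(\bar x)$ for all $\bar x$ near $0$: the boundary of $L$ sits above (or on) the boundary of $M$.

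Next I would pass to the dual/support-function side. For a regular strictly convex body, $f$ is the reciprocal Gauss curvature, and at a $C^2_+$ point it equals $\det(h_{ij}+\delta_{ij}h)$ evaluated at the normal direction; more invariantly, $f(\nu)$ is the product of the principal radii of curvature at the boundary point with normal $\nu$, i.e. the reciprocal of the Gauss curvature (Hessian determinant of the graph function) at $p$. Concretely, if $g$ is $C^2$ at $0$ with $\nabla g(0)=0$, then the reciprocal Gauss curvature of its graph at $p$ equals $1/\det D^2 g(0)$ (interpreting this as $+\infty$ if $\det D^2 g(0)=0$). So the claim $f_L(\nu)\le f_M(\nu)$ amounts to $\det D^2 g_L(0)\ge \det D^2 g_M(0)$, and this is exactly what the geometric inclusion should force: if $g_L\ge g_M$ near $0$ with equality and equal gradients at $0$, then $g_L - g_M$ has a local minimum at $0$, so $D^2 g_L(0) - D^2 g_M(0)\succeq 0$ as a symmetric matrix — \emph{provided} that difference is twice differentiable there, which is where the $C^2_+$ hypothesis on one of the two bodies enters. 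Having a positive semidefinite difference of two positive definite matrices does not immediately give the determinant inequality in the wrong direction, so I would instead argue: if $M$ is $C^2_+$ then $D^2 g_M(0)\succ 0$ and $D^2 g_L(0)\succeq D^2 g_M(0)\succ 0$ forces $\det D^2 g_L(0)\ge \det D^2 g_M(0)$ by monotonicity of $\det$ on positive definite matrices under the Loewner order; if instead $L$ is the $C^2_+$ body, then $D^2 g_L(0)$ is defined and positive definite, $g_M$ is semiconvex with $g_M\le g_L$, and I would use a second-order (Alexandrov-type) comparison: at any point of twice-differentiability of $g_M$ near $0$ one has $D^2 g_M \preceq D^2 g_L$ in the distributional/Alexandrov sense along the touching, giving the same determinant inequality, hence the curvature comparison at $p$, hence $f_L(\nu)\le f_M(\nu)$.

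The main obstacle is the regularity bookkeeping: only one of $L,M$ is assumed $C^2_+$, the other is merely regular and strictly convex with a continuous density $f$, so one graph function is genuinely $C^2$ with positive definite Hessian while the other is only $C^{1,1}$-type (semiconvex) with an a.e.-defined Hessian. The careful point is to justify the matrix inequality $D^2(\text{smooth one}) \succeq$ or $\preceq D^2(\text{rough one})$ at the single point $p$ — via the fact that the rough graph touches the smooth one from the correct side, combined with Alexandrov's theorem to make sense of the second-order behaviour of the rough function, and the continuity of its curvature density $f$ to identify the limiting value at $p$ with $f(\nu)$. Once the correct Loewner comparison of Hessians at $p$ is in hand, the determinant monotonicity on positive definite symmetric matrices finishes it; I would also note that the strict convexity of $L$ and $M$ guarantees $\nu$ is the unique common normal and the graphs are genuine (non-degenerate) local parametrizations, so no boundary segments or corners interfere.
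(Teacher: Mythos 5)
Your reduction to a Hessian comparison at the contact point has a genuine gap. For the body that is \emph{not} assumed to be of class $C^2_+$, nothing in the hypotheses guarantees second-order differentiability of its boundary (or of its support function) at the specific point $p$ (respectively at the direction $\nu$), nor that the value $f(\nu)$ of the continuous density of its surface area measure coincides with the reciprocal Gauss curvature of its boundary at $p$. Alexandrov's theorem gives twice differentiability only almost everywhere, and $p$ (or $\nu$) may well be an exceptional point; continuity of the density does not upgrade to pointwise $C^2$ behaviour at a prescribed point --- that would be exactly the kind of deep Monge--Amp\`{e}re regularity the paper explicitly chooses to avoid. Moreover, the step where you compare Hessians ``at any point of twice-differentiability of $g_M$ near $0$'' is unjustified: the inequality $g_M\le g_L$ with contact at $0$ yields a second-order (Loewner) comparison only at the contact point itself, where twice differentiability of the rough graph is unknown, while at nearby Alexandrov points there is no touching and hence no pointwise comparison at all. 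So both branches of your dichotomy (rough inner body, rough outer body) rest on a pointwise Hessian of the rough body at $p$ that may not exist, and on the unproved identification of its determinant with $f(\nu)$; the determinant monotonicity under the Loewner order, which is the part you do justify, is not where the difficulty lies.

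The paper's proof takes a different route precisely to bypass this: it approximates $f_L$ and $f_M$ uniformly by smooth positive functions, solves the Minkowski problem to obtain $C^2_+$ bodies $L_m\to L$, $M_m\to M$ (Pogorelov regularity, plus Lemma \ref{lemma-appendix} and Blaschke selection to control the approximating bodies), dilates by factors $c_m\to 1$ so that $c_mL_m$ touches $M_m$ from inside at a point with normal $\nu_m$, applies the trivial $C^2_+$-versus-$C^2_+$ case there, and passes to the limit using uniform convergence of the densities and continuity of $f_L,f_M$ --- first under the extra assumption $V\setminus\{p\}\subseteq\inter M$, then shrinking $V$ down to $\{p\}$. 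Notably, the $C^2_+$ hypothesis on one of the bodies is used only at the very end, to perturb that body (replacing $h_L$ by $h_L-tg$ with $g(\nu)=0$ and $g>0$ elsewhere) and reduce general touching to strict touching; it is not used to run a curvature comparison at $p$, contrary to where your plan places it. To salvage your approach you would need a comparison principle for Alexandrov/Monge--Amp\`{e}re measures valid at a single contact point with merely continuous data, which is exactly the delicate point the approximation argument sidesteps.
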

Proposition \ref{prop-appendix} is trivial if both convex bodies $L$ and $M$ are of class $C_+^2$. However, if we assume $C_+^2$ regularity for both $L$ and $M$, then as far as we can tell, Proposition \ref{prop-more} (and therefore the final part of Theorem \ref{thm-main}) can only be proved under the additional assumption that the boundary of $K$ is $C^2$. The interested reader can find a full proof of Proposition \ref{prop-appendix} in the Appendix of this note.
\\
\\
Proof of Proposition \ref{prop-appendix} in the case that $L$ and $M$ are of class $C^2_+$.\\ 
Since both $L$ and $M$ are both of class $C_+^2$, their boundaries are of class $C^2$ and it is well known that the assumption of Proposition \ref{prop-appendix} implies that the curvature of $\bd L$ at $p$ dominates the curvature of $\bd M$ at $p$. Thus (since $\eta_L(p)=\eta_M(p)=\nu)$, (\ref{eq-curvature}) yields that $f_L(\nu)\le f_M(\nu)$. $\square$
\begin{lemma}\label{lemma-local-extr} Let $K$ be a strictly convex body, which is symmetric with respect to the $x_2$-axis and contains the origin in its interior. 
The following are true.
\begin{enumerate}[i)]
\item If for some $\varphi_0\in\R$ the function $\overline{h}_{Q(K)}$ attains a local extremum at $\varphi_0$, then it holds
\begin{equation}\label{eq-lemma-local-extr}
p_{Q(K)}(\cos\varphi_0,\sin\varphi_0)=\overline{h}_{Q(K)}(\varphi_0)(\cos\varphi_0,\sin\varphi_0),
\end{equation}while if (\ref{eq-lemma-local-extr}) does not hold, then $\overline{h}_{Q(K)}$ is strictly monotone in a neighbourhood of $\varphi_0$.
\item If $K$ solves (\ref{eq-thm-main}) for some function $G\in{\cal A}(n)$ and if $\varphi_0$ is a point of local minimum (resp. local maximum) for $\overline{h}_{Q(K)}$, then we have $G(\overline{h}_{Q(K)}(\varphi_0))\geq \overline{h}_{Q(K)}(\varphi_0)^{n-1}$ (resp. $G(\overline{h}_{Q(K)}(\varphi_0))\leq \overline{h}_{Q(K)}(\varphi_0)^{n-1}$).\end{enumerate}
\end{lemma}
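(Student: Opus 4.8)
The plan is to prove Lemma \ref{lemma-local-extr} in two parts, treating the purely geometric statement (i) first and then deriving the curvature comparison (ii) from it together with Proposition \ref{prop-appendix}.

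\textbf{Part (i).} Work in the plane $E=\R^2$ with $T:=Q(K)$, which is strictly convex, contains the origin in its interior, and is symmetric with respect to the $x_2$-axis. Suppose $\overline h_T$ has a local extremum at $\varphi_0$. Let $v_0=(\cos\varphi_0,\sin\varphi_0)$ and let $x_0=p_T(v_0)\in\bd T$ be the (unique, by strict convexity) contact point of the supporting line of $T$ with outer normal $v_0$. Writing $x_0$ in polar form, there is an angle $\psi$ and a radius $r>0$ with $x_0=r(\cos\psi,\sin\psi)$; the support-function identity gives $\overline h_T(\varphi_0)=\langle x_0,v_0\rangle=r\cos(\psi-\varphi_0)$. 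The key classical fact (a consequence of the characterization $\nabla h_T(v)=\eta_T^{-1}(v)$ from \eqref{eq-gauss-map}, or equivalently of the standard planar formula $p_T(\cos\varphi,\sin\varphi)=\overline h_T(\varphi)(\cos\varphi,\sin\varphi)+\overline h_T'(\varphi)(-\sin\varphi,\cos\varphi)$ valid wherever $\overline h_T$ is differentiable) is that the component of $x_0$ tangent to $v_0$ equals the one-sided derivatives of $\overline h_T$ at $\varphi_0$. More precisely, for any convex $\overline h_T$ one has $\overline h_T{}'_-(\varphi_0)\le \langle x_0,v_0^\perp\rangle\le \overline h_T{}'_+(\varphi_0)$ where $v_0^\perp=(-\sin\varphi_0,\cos\varphi_0)$, because $x_0$ is the contact point. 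If $x_0$ is \emph{not} a positive multiple of $v_0$, then $\langle x_0,v_0^\perp\rangle\ne 0$, so the one-sided derivatives of $\overline h_T$ at $\varphi_0$ are both nonzero and of the same sign as $\langle x_0,v_0^\perp\rangle$; hence $\overline h_T$ is strictly monotone near $\varphi_0$ and cannot have a local extremum there. This proves both assertions of (i): a local extremum forces \eqref{eq-lemma-local-extr}, and failure of \eqref{eq-lemma-local-extr} forces strict local monotonicity.

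\textbf{Part (ii).} Assume $K$ solves \eqref{eq-thm-main} and $\varphi_0$ is a local minimum of $\overline h_T$; let $\rho_0:=\overline h_T(\varphi_0)$ and $v_0=(\cos\varphi_0,\sin\varphi_0)$. By part (i), $p_T(v_0)=\rho_0 v_0$, i.e. the point $\rho_0 v_0\in\bd T$ has outer normal $v_0$; rotating about the $x_2$-axis, the corresponding boundary point $x_0=\rho_0 v_0$ (now viewed in $\Rn$ via the embedding of $E$) is a boundary point of $K$ with outer unit normal $v_0$ and $|x_0|=\rho_0=h_K(v_0)$. Consider the ball $B:=\rho_0 B_2^n$. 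Since $\varphi_0$ is a local minimum of $\overline h_T$, we have $\overline h_T(\varphi)\ge\rho_0=h_B$ near $\varphi_0$, which combined with the rotational symmetry gives $h_K(v)\ge h_B(v)$ for all $v$ in a neighbourhood of $v_0$ in $\Sn$; equivalently, a neighbourhood $V$ of $x_0$ in $\bd B$ lies inside $K$, and $x_0\in\bd K\cap\bd B$ with common outer normal $v_0$. Now apply Proposition \ref{prop-appendix} with $L:=B$ and $M:=K$ — note $B$ is of class $C^2_+$, so the hypothesis "at least one of $L,M$ is $C^2_+$" is met — to get $f_B(v_0)\le f_K(v_0)$. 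Since $f_B(v_0)=\rho_0^{n-1}$ and $f_K(v_0)=G(h_K(v_0))=G(\rho_0)$ by \eqref{eq-thm-main}, this is exactly $G(\overline h_T(\varphi_0))\ge\overline h_T(\varphi_0)^{n-1}$. The local-maximum case is identical with the inequalities (and the roles of $B$ and $K$ in the containment) reversed: near a local maximum $h_K(v)\le h_B(v)$, so a neighbourhood of $x_0$ in $\bd K$ lies inside $B$, and Proposition \ref{prop-appendix} with $L:=K$, $M:=B$ gives $f_K(v_0)\le f_B(v_0)$, i.e. $G(\rho_0)\le\rho_0^{n-1}$.

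\textbf{Main obstacle.} The only genuinely delicate point is the regularity needed to invoke Proposition \ref{prop-appendix} and to justify the one-sided derivative computation in part (i): $K$ is only known a priori to be regular and strictly convex (Corollary \ref{cor-regularity}), not $C^2_+$, so $\overline h_T$ need not be twice differentiable and $f_K$ is only an $L^1$ density. This is precisely why Proposition \ref{prop-appendix} is stated for merely regular strictly convex bodies with continuous curvature densities and with the weak hypothesis that \emph{one} of the two bodies is $C^2_+$ — here the comparison ball $B=\rho_0 B_2^n$ supplies that smooth body, so no extra assumption on $\bd K$ is required. In part (i) one should be careful to phrase everything in terms of the (always existing) one-sided derivatives of the convex function $\overline h_T$ and the contact point $p_T(v_0)$, avoiding any appeal to $C^2$ smoothness; the containment statements feeding into Proposition \ref{prop-appendix} then follow purely from the definition of local extremum together with the rotational symmetry of $K$.
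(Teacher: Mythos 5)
Your argument is correct and follows essentially the same route as the paper: part (i) via the identity $\overline h_{Q(K)}'(\varphi)=\langle p_{Q(K)}(\cos\varphi,\sin\varphi),(-\sin\varphi,\cos\varphi)\rangle$ (available since strict convexity makes $h_{Q(K)}$ of class $C^1$), and part (ii) by comparing $K$ with the ball $\overline h_{Q(K)}(\varphi_0)B_2^n$ at the common boundary point and common normal and invoking Proposition \ref{prop-appendix}, with the ball supplying the required $C^2_+$ body exactly as in the paper. The only places where your write-up is looser than it should be (the one-sided-derivative sandwich alone does not give both one-sided derivatives the same sign -- you need the actual $C^1$ differentiability you cite -- and the passage from ``$h_K\geq h_B$ near $v_0$ plus common contact point'' to local containment of $\partial B$ in $K$ deserves a line of justification) are points the paper's own proof also treats as immediate, so they do not constitute genuine gaps.
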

\begin{proof} Notice that (\ref{eq-lemma-local-extr}) holds if and only if the position vector of $p_{Q(K)}(\cos\varphi_0,\sin\varphi_0)$ is orthogonal to the vector $(-\sin\varphi_0,\cos\varphi_0)$.  On the other hand, (\ref{eq-gauss-map}) and the fact that $p_{Q(K)}=\eta^{-1}_K$ give
$$\overline{h}_{Q(K)}'(\varphi_0)=\langle \nabla h_{Q(K)}(\cos\varphi_0,\sin\varphi_0),(-\sin\varphi_0,\cos\varphi_0)\rangle=\langle p_{Q(K)}(\cos\varphi_0,\sin\varphi_0),(-\sin\varphi_0,\cos\varphi_0)\rangle.$$
This proves (i).

To prove the second part, recall that $K$ is regular (as Corollary \ref{cor-regularity} shows) and set $D:=\overline{h}_{Q(K)}(\varphi_0)B_2^2\subseteq \R^2$.  If $\overline{h}_{Q(K)}$ attains a local minimum at $\varphi_0$, then in a neighbourhood of $\varphi_0$ it holds $h_{Q(K)}(\varphi)\geq h_D(\varphi)$. However, (\ref{eq-lemma-local-extr}) shows that $p_{Q(K)}(\cos\varphi_0,\sin\varphi_0)=p_{D}(\cos\varphi_0,\sin\varphi_0)$. It follows that a neighbourhood of $p_{D}(\cos\varphi_0,\sin\varphi_0)$ in the boundary of $D$ is contained in $Q(K)$. Thus, we immediately see that if $B:=\overline{h}_{Q(K)}(\varphi_0)B_2^n$, then the boundary of $B$ touches the boundary of $K$ at the point $p:=(p_D(\cos\varphi_0,\sin\varphi_0),0,\dots,0)\in\Rn$ and some neighbourhood of $p$ in the boundary of $B$ is contained in $K$. Since the outer unit normal vector for both $K$ and $B$ at $p$ equals $\nu:=(\cos\varphi_0,\sin\varphi_0,0,\dots,0)$, Proposition \ref{prop-appendix} shows that 
$$f_K(\nu)\geq f_B(\nu)=\overline{h}_{Q(K)}(\varphi_0)^{n-1}$$
or equivalently,
$$G(\overline{h}_K(\varphi_0))=G(h_K(\nu))\geq\overline{h}_{Q(K)}(\varphi_0)^{n-1}.$$
The remaining case (i.e. when $\overline{h}_{Q(K)}$ attains a local maximum at $\varphi_0$) can be treated similarly.
\end{proof}
\noindent
Proof of Proposition \ref{prop-more}.\\
Set $T:=Q(K)$. Assume that (ii) is not true. Then, according to Lemma \ref{lemma-local-extr}, $p_T(1,0)$ must be contained in the $x_1$-axis.  Then, by Lemma \ref{lemma-technical} (i), there exists a centrally symmetric convex body $\overline{K}$ with $\overline{K}\cap \{x\in\Rn:x_2\geq 0\}=K\cap\{x\in\Rn:x_2\geq 0\}$, satisfying (\ref{eq-thm-main}). Since $b(\overline{K})=o$, Proposition \ref{lemma-key} shows that $\overline{K}=\lambda_1B_2^n$, for some $\lambda_1>0$. Thus, $K\cap\{x\in\Rn:x_2\geq 0\}=\lambda_1B_2^n\cap\{x\in\Rn:x_2\geq 0\}$. Putting $-K$ in the place of $K$, we conclude that $-K\cap\{x\in\Rn:x_2\geq 0\}=\lambda_2B_2^n\cap\{x\in\Rn:x_2\geq 0\}$, for some $\lambda_2>0$. This shows that $K=-K=\lambda_1B_2^n=\lambda_2B_2^n$ and (ii) is proved.

To prove (i), it suffices to show that $\overline{h}_T$ is monotone on $[0,\pi/2]$. Indeed, if this is true, one can replace $K$ by $-K$ to show that $\overline{h}_T$ is also monotone on $[-\pi/2,0]$. Taking (ii) into account, this would imply that $\overline{h}_T$ is monotone on $[-\pi/2,\pi/2]$. 

Let $\varphi_0$ be such that the interval $[0,\varphi_0]$ is the largest interval of the form $[0,\varphi]\subseteq [0,\pi/2]$, on which $\overline{h}_T$ is monotone. We need to show that $\varphi_0=\pi/2$. Assume that this is not true. Then, $\overline{h}_T$ is not constant on $[\varphi_0,\pi/2]$, so we can find a minimal (with respect to inclusion) interval $[\varphi_1,\varphi_2]\subseteq [\varphi_0,\pi/2]$, such that the extrema $\min_{\varphi\in[\varphi_0,\pi/2]}\overline{h}_T(\varphi)$, $\max_{\varphi\in[\varphi_0,\pi/2]}\overline{h}_T(\varphi)$ are attained at the points $\varphi_1$, $\varphi_2$. In particular, all values of $\overline{h}_T$ on $(\varphi_1,\varphi_2)$ lie strictly between $\overline{h}_T(\varphi_1)$ and $\overline{h}_T(\varphi_2)$. Moreover, $\varphi_1$ and $\varphi_2$ are points of local extremum for $\overline{h}_T$ (notice that since $\overline{h}_T$ is symmetric about the point $\varphi=\pi/2$, this is also true even if $\phi_2=\pi/2$). It follows from Lemma \ref{lemma-local-extr} that if $r_i=\overline{h}_T(\varphi_i)$, then $p_T(\cos\varphi_i,\sin \varphi_i)=p_{Q(r_iB_2^n)}(\cos\varphi_i,\sin\varphi_i)$, $i=1,2$. Notice, furthermore, that if $\overline{h}_T$ attains a local minimum at $\varphi_1$, then $\overline{h}_T$ attains a local maximum at $\varphi_2$ and vice versa. Using again Lemma \ref{lemma-local-extr}, we see that in the first case we have $r_1^{n-1}\leq G(r_1)$ and $r_2^{n-1}\geq G(r_2)$, while in the second case the inequalities are reversed. In any case, the assumptions of Lemma \ref{lemma-technical} (ii) are satisfied with $c_1=\min\{r_1,r_2\}$, $c_2=\max\{r_1,r_2\}$ and $a_i=c_i^{n-1}$, $i=1,2$. Let $\overline{K}$, $\{G_\varepsilon\}_{\varepsilon\in(0,\varepsilon_0)}$, $\{U_\varepsilon\}$ be as in the statement of Lemma \ref{lemma-technical}. Then, it holds
\begin{equation}\label{eq-prop-more}
f_{\overline{K}}(v)=G_\varepsilon(h_{\overline{K}}(v)), \qquad\textnormal{for almost every }v\in\Sn\setminus U_\varepsilon.
\end{equation}

Since $\overline{K}$ is centrally symmetric but not an ellipsoid, by Lemma \ref{lemma-before-key} (ii), there exists an orthogonal map $O$, such that if we set $K_1:=O\overline{K}$, it holds
$$\int_{\Sn}h'_{K_1}(v)dS_{K_1}<0.$$On the other hand, (\ref{eq-prop-more}) and Lemma \ref{lemma-eq-l-st-s-der} show that \\
$\displaystyle{\int_{\Sn}h'_{K_1}(v)dS_{K_1}(v)}$
\begin{eqnarray*}
&=&\int_{\Sn}G_\varepsilon(h_{K_1}(v))h'_{K_1}(v)d\Ha(v)+\int_{U_\varepsilon}[f_{K_1}(v)-G_\varepsilon(h_{K_1}(v))]h'_{K_1}(v)d\Ha(v)\\
&\geq&\int_{U_\varepsilon}[f_{K_1}(v)-G_\varepsilon(h_{K_1}(v)]h'_{K_1}(v)d\Ha(v)\xrightarrow{\varepsilon\to 0^+}0,
\end{eqnarray*}
where we used the fact that $h'_{K_1}$ is bounded (see Corollary \ref{prop-diff}) and, therefore, the last integral indeed exists and converges to zero. We arrived at a contradiction because we assumed that $\varphi_0<\pi/2$. Hence, $\overline{h}_T$ is monotone on $[0,\pi/2]$, as claimed.  
$\square$
\section{Proof of Theorem \ref{thm-counterexample}}
\hspace*{1em}We may assume that in the statement of Theorem \ref{thm-counterexample}, $a=\lambda e_2$ for some $\lambda>0$. Fix a number $r>2\lambda$. Then, it is clear that the ball $rB_2^n+\lambda e_2$ contains the origin in its interior and that the function $\overline{h}_{Q(rB_2^n+\lambda e_2)}(\varphi)=\overline{h}_{rB_2^2+\lambda e_2}(\varphi)=r+\lambda\sin\varphi$, $\varphi\in\R$, is strictly increasing on $[-\pi/2,\pi/2]$. Let $g:[-1,1]\to[0,\infty)$ be an even $C^\infty$ function supported on $[-1/2,1/2]$. Since the ball $rB_2^n$ is uniformly convex, it follows that if $m\in\mathbb{N}$ is large enough, them the function $\Sn\ni v\mapsto r+(1/m)g(\langle v,e_2\rangle)$ is the support function of a centrally symmetric strictly convex body $K_m$ in $\Rn$, which is symmetric with respect to the $x_2$-axis and has $C^\infty$ boundary. Notice that $\{h_{K_m}\}$ converges uniformly to $h_{rB_2^n}\equiv r$, while all derivatives (on the sphere) of $\{h_{K_m}\}$ converge uniformly to 0. In particular, if we define $\overline{f}_{M}:[-\pi/2,\pi/2]\to\R$ by
$$\overline{f}_M(\varphi):=f_M(\cos\varphi,\sin\varphi,0,\dots,0),$$
where $M=K_m$ or $rB_2^2$, we conclude that 
\begin{equation}\label{eq-counterexample-uniform}
\overline{f}_{K_m}\to\overline{f}_{rB_2^n}\equiv r^{n-1}\qquad\textnormal{and}\qquad \overline{f}_{K_m}'\to\overline{f}_{rB_2^n}'\equiv 0,
\end{equation}uniformly of $[-\pi/2,\pi/2]$. Observe, also, that 
\begin{equation}\label{eq-counterexample-0}
\overline{f}_{K_m}(\varphi)=r,\qquad\textnormal{for all }\varphi\in [-\pi/2,-\pi/6)\cup(\pi/6,\pi/2].
\end{equation}

Next, consider the function $\overline{h}_{Q(K_m+\lambda e_2)}(\varphi)=r+(1/m)g(\sin\varphi)+\lambda\sin\varphi$, $\varphi\in\R$. We have, 
\begin{equation}\label{eq-counterexample-der}
\overline{h}_{Q(K_m+\lambda e_2)}'(\varphi)=\cos\varphi\left(\lambda+\frac{1}{m}g'(\sin\varphi)\right),\qquad \varphi\in\R
\end{equation}and, therefore, $\overline{h}_{Q(K_m+\lambda e_2)}$ is strictly increasing on $[-\pi/2,\pi/2]$, provided that $m$ is large enough.
For large $m$, consider $\zeta,\zeta_m:[r-\lambda,r+\lambda]\to[-\pi/2,\pi/2]$ to be the inverses of $\overline{h}_{Q(B_2^n+\lambda e_2)}|_{[-\pi/2,\pi/2]}$ and $\overline{h}_{Q(K_m+\lambda e_2)}|_{[-\pi/2,\pi/2]}$, respectively and set 
\begin{equation*}\label{eq-counterexample-G_m}
G_m(\theta):=\overline{f}_{K_m}(\zeta_m(\theta)),\qquad \theta\in[-\pi/2,\pi/2].
\end{equation*}
One can prove easily that $\{\zeta_m\}$ converges uniformly to $\zeta$ on $[r-\lambda,r+\lambda]$ and, hence, $\{G_m\}$ converges uniformly to the constant $r^{n-1}$ on $[r-\lambda,r+\lambda]$. Furthermore, as (\ref{eq-counterexample-der}) shows, $\zeta_m'$ is bounded on $[r-\lambda/2,r+\lambda/2]$ (if $m$ is large), so from (\ref{eq-counterexample-uniform}) and (\ref{eq-counterexample-0}), we immediately see that $\{G_m'\}$ converges uniformly to 0 on $[r-\lambda,r+\lambda]$. It follows that the sequence $\{\theta G'_m(\theta)+(n+1)G_m(\theta)\}$ converges uniformly to $(n+1)r^{n-1}$ on $[r-\lambda,r+\lambda]$. Consequently, there exists a positive integer $m_0$, such that for any $m\geq m_0$, $G_m$ is (or more precisely, can be extended to; see Remark \ref{remark-we-may-assume}) a member of class ${\cal A}(n)$. However, from the definition of $G_m$, we clearly see that for $m\geq m_0$, it holds 
$$f_{K_m+\lambda e_2}(v)=f_{K_m}(v)=G_m(h_{K_m+\lambda e_2}(v)),\qquad\textnormal{for all }v\in\Sn$$
and the proof of Theorem \ref{thm-counterexample} is complete.
$\square$
\begin{remark}\label{remark-end} It is clear that in the construction above, for any positive integer $k$, $G$ can be chosen to be of class $C^k$. Currently, we do not have an example of a non-spherical convex body $K$ that satisfies (\ref{eq-thm-main}) for some $C^\infty$ function $G\in{\cal A}(n)$.
\end{remark}
\appendix\section{Appendix}
\hspace*{1.5em}This Appendix is devoted to the proof of Proposition \ref{prop-appendix} in full generality.
We will need the following fact, which we believe should be well known. Since we were unable to find an explicit reference, we provide a proof here.
\begin{lemma}\label{lemma-appendix}
Let $A>0$. There exists a constant $C=C(A,n)>0$, such that if $L$ is a convex body in $\Rn$, with absolutely continuous surface area measure with respect to $\Ha$ and 
\begin{equation}\label{eq-bounds-on-f_L} 
\frac{1}{A}\leq f_L(v)\leq A,\qquad \textnormal{for almost every }v\in\Sn,
\end{equation}
then there exists $a\in\Rn$, such that 
$$\frac{1}{C}B_2^n+a\subseteq L\subseteq CB_2^n+a.$$
\end{lemma}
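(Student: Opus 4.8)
The plan is to extract, from the two-sided bound $\tfrac1A\le f_L\le A$ alone, a finite upper bound $C_2=C_2(A,n)$ for $\mathrm{diam}(L)$ and a positive lower bound $r_0=r_0(A,n)$ for the inradius $r_{\mathrm{in}}(L)$, and then to take $a$ to be the incenter of $L$. The whole difficulty is that this bound on $f_L$ has to rule out two very different degeneracies: the upper bound $f_L\le A$ must forbid very long thin bodies (this controls the diameter), and the lower bound $f_L\ge\tfrac1A$ must forbid very flat ones (this controls the inradius), and it is the latter step that is delicate.

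First I would record that $S(L):=S_L(\Sn)=\int_{\Sn}f_L\,d\Ha$ satisfies $\Ha(\Sn)/A\le S(L)\le A\,\Ha(\Sn)$. Using $h_{B_2^n}\equiv1$ and the formula $V(B_2^n,L)=\tfrac1n\int_{\Sn}h_{B_2^n}\,dS_L=\tfrac1nS(L)$ together with Minkowski's first inequality (\ref{eq-Minkowski}) gives the isoperimetric inequality $S(L)\ge n\,V(B_2^n)^{1/n}V(L)^{(n-1)/n}$, hence an upper bound $V(L)\le C_1(A,n)$. For the diameter, let $[p,q]\subseteq L$ be a diameter segment of length $d$, put the origin at the midpoint $m=\tfrac{p+q}2\in L$, and set $e=(p-m)/|p-m|$; then $h_L\ge0$ and $h_L(v)\ge\tfrac d2|\langle e,v\rangle|$ on $\Sn$. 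Since $nV(L)=\int_{\Sn}h_L\,dS_L=\int_{\Sn}h_Lf_L\,d\Ha\ge\tfrac1A\int_{\Sn}h_L\,d\Ha\ge\tfrac{d}{2A}\int_{\Sn}|\langle e,v\rangle|\,d\Ha(v)=\tfrac{d\,\kappa_n}{2A}$, where $\kappa_n:=\int_{\Sn}|\langle e,v\rangle|\,d\Ha(v)>0$ is independent of the unit vector $e$, combining with $V(L)\le C_1$ yields $\mathrm{diam}(L)\le C_2(A,n)$.

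Next, the inradius. I would first bound $V(L)$ from below using that $f_L$ is bounded below together with Cauchy's formula for the volume of a hyperplane projection: for every unit vector $u$, $V_{n-1}(L\mid u^{\perp})=\tfrac12\int_{\Sn}|\langle u,v\rangle|f_L(v)\,d\Ha(v)\ge\tfrac{\kappa_n}{2A}$. Thus \emph{every} orthogonal projection of $L$ onto a hyperplane has $(n-1)$-volume at least $\kappa_n/(2A)$, and this forces $V(L)\ge c_0(A,n)>0$: take the John ellipsoid $\mathcal E$ of $L$ (so $\mathcal E\subseteq L\subseteq n\mathcal E$), project onto the hyperplanes orthogonal to the axes of $\mathcal E$, and multiply the resulting $n$ lower bounds on the products of semi-axes; alternatively one may quote Meyer's dual Loomis--Whitney inequality. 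Now let $w$ be the minimal width of $L$; after translating so that $L\subseteq C_2B_2^n$, the body $L$ lies in a slab of width $w$ intersected with $C_2B_2^n$, a set of volume at most $w\,V(B_2^{n-1})\,C_2^{n-1}$, whence $w\ge c_0/(V(B_2^{n-1})C_2^{n-1})=:w_0>0$; Steinhagen's inequality $w\le\gamma_n\,r_{\mathrm{in}}(L)$ (with $\gamma_n$ a dimensional constant) then gives $r_{\mathrm{in}}(L)\ge w_0/\gamma_n=:r_0>0$.

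Finally, taking $a$ to be the incenter of $L$ we have $a+r_0B_2^n\subseteq L$, and since $a\in L$ also $L\subseteq a+\mathrm{diam}(L)B_2^n\subseteq a+C_2B_2^n$; so $C:=\max\{C_2,1/r_0\}$ works. The hard part is the lower bound on $V(L)$, equivalently on $r_{\mathrm{in}}(L)$: it is precisely here that the ``curvature bounded above'' hypothesis $f_L\ge\tfrac1A$ must be converted into genuine thickness, and Cauchy's projection formula is the device that makes this uniform; the only remaining care is the bookkeeping of translations so that the inscribed and circumscribed balls end up concentric.
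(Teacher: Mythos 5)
Your proof is correct, but it follows a genuinely different route from the paper's. The paper gets \emph{two-sided} volume bounds in one stroke from the monotonicity fact ``$S_{L_1}\leq S_{L_2}$ implies $V(L_1)\leq V(L_2)$'' (comparing $S_L=f_L\,d\Ha$ with the surface area measures of balls of radii $A^{\pm 1/(n-1)}$), then uses Cauchy's projection formula for two-sided bounds on all projection volumes, puts $L$ in John position $L\subseteq C_3 S B_2^n$, $S^{-1}L\supseteq \frac{1}{C_3}B_2^n$, and bounds the extreme eigenvalues of $S$ by testing the projection bounds on the corresponding eigendirections; the common center $a$ is essentially the John center. You instead bound $V(L)$ from above via Minkowski's first inequality against $B_2^n$, bound the diameter by the clean estimate $nV(L)=\int_{\Sn}h_L\,dS_L\geq \frac{1}{A}\int_{\Sn}h_L\,d\Ha\geq \frac{d\,\kappa_n}{2A}$ (using only the lower bound on $f_L$), get the volume lower bound from the projection lower bounds combined with $\mathcal{E}\subseteq L\subseteq n\mathcal{E}$ and the product-over-axes trick, and finish with minimal width plus Steinhagen's inequality, centering at the incenter. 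Each step checks out, and the constants depend only on $A$ and $n$ as required. What the paper's route buys is economy: the surface-measure monotonicity gives the volume lower bound immediately, avoiding your most involved step, and the John-position eigenvalue argument produces concentric balls without invoking Steinhagen. What your route buys is that it never needs the upper bound on projection volumes, and the diameter bound is obtained directly rather than through the ellipsoid. Two small remarks: your aside that one could ``alternatively quote Meyer's dual Loomis--Whitney inequality'' is off target, since that inequality concerns hyperplane \emph{sections} rather than projections, so it would not substitute for your John-ellipsoid argument (which is fine as written and should remain the primary justification); and you could shorten the endgame, since once the semi-axes of $\mathcal{E}$ have product bounded below and are each at most $\mathrm{diam}(L)/2\leq C_2/2$, the smallest semi-axis is bounded below, giving the inradius bound without Steinhagen.
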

\begin{proof} Any constant $C_1,C_2$, etc. that will appear in this proof will denote a positive constant that depends only on $A$ and the dimension $n$. For an $(n-1)$-dimensional subspace $H$ of $\Rn$, we write $V_H(\cdot)$ for the volume functional in $H$. 

Recall the well known fact that if $L_1$, $L_2$ are convex bodies with $S_{L_1}\leq S_{L_2}$, then it holds $V(L_1)\leq V(L_2)$. Using this and (\ref{eq-bounds-on-f_L}), we obtain
\begin{equation}\label{eq-app-V(L)}
\frac{1}{C_1}\leq V(L)\leq C_1,
\end{equation}
for some $C_1>0$. Moreover, the well known formula
$$V_{e^\perp}(L|e^\perp)=\frac{1}{2}\int_{\Sn}|\langle x,e\rangle|dS_L(x),\qquad e\in\Sn$$
immediately shows that 
\begin{equation}\label{eq-app-vol-proj}
\frac{1}{C_2}\leq V_{e^\perp}(L|e^\perp)\leq C_2,
\end{equation}
for all $e\in\Sn$, where $C_2$ is another positive constant. After a suitable translation, we may assume that the maximal volume ellipsoid $E=tSB_2^n$ contained in $L$ is centered at the origin. Here, $S$ denotes a symmetric positive definite matrix of determinant 1 and $t=(V(E)/V(B_2^n))^{1/n}$. Then, (\ref{eq-app-V(L)}) together with the classical theorem of F. John \cite{john_1948}, yields
$$\frac{1}{C_3}B_2^n\subseteq S^{-1}L\subseteq C_3B_2^n,$$
for some constant $C_3>0$. Equivalently, we may write
\begin{equation}\label{eq-app-S}
\frac{1}{C_3}SB_2^n\subseteq L\subseteq C_3SB_2^n. 
\end{equation}
Let $\lambda,\mu$ be the smallest and the largest eigenvalue of $S$ and $e_\lambda,e_\mu$ be the corresponding eigenvectors respectively. Then, (\ref{eq-app-vol-proj}) and (\ref{eq-app-S}) give
$$\frac{1}{C_3^{n-1}}\frac{1}{\lambda}V_{e_\lambda^\perp}(B_2^n|e_\lambda^\perp)=V_{e_\lambda^\perp}((1/C_3)SB_2^n|e_\lambda^\perp)\leq V_{e_\lambda^\perp}(L|e_\lambda^\perp)\leq C_2$$and
$$C_3^{n-1}\frac{1}{\mu}V_{e_\mu^\perp}(B_2^n|e_\mu^\perp)=V_{e_\mu^\perp}(C_3SB_2^n|e_\mu^\perp)\geq V_{e_\mu^\perp}(L|e_\mu^\perp)\geq \frac{1}{C_2}.$$Consequently, if $C_4:=C_2C_3^{n-1}$, then
$1/C_4\leq \lambda\leq \mu\leq C_4$ and hence, using again (\ref{eq-app-S}), we conclude
$$\frac{1}{C_3C_4}B_2^n\subseteq L\subseteq C_3C_4 B_2^n.$$This completes our proof.
\end{proof}
\noindent Proof of Proposition \ref{prop-appendix}.\\ 
We may clearly assume that $o\in \inter L\cap \inter M$. First, let us prove Proposition \ref{prop-appendix} without any regularity assumption on the boundaries of $L$ and $M$, but under the additional assumption that $V\setminus \{p\}\subseteq \inter M$. This, together with the fact that $p\in\inter L\cap \inter M$ is clearly equivalent to:
\begin{enumerate}[i)]
\item $\rho_L(v_0)=\rho_M(v_0)$, where $v_0=p/|p|$
\item $\rho_L(v)<\rho_M(v)$, for all $v\in U:=\{x/|x|:x\in V\}$.
\end{enumerate}
Since $f_L$ and $f_M$ are continuous, there are sequences of strictly positive $C^\infty$ functions $\{\underline{f}_m\}$ and $\{\overline{f}_m\}$, such that $\underline{f}_m\to f_L$ and $\overline{f}_m\to f_M$, uniformly on $\Sn$. By Minkowski's existence and Uniqueness Theorem, for $m\in\mathbb{N}$, there exist uniquely determined up to translation convex bodies $L_m$ and $M_m$, such that $S_{L_m}=\underline{f}_md\Ha$ and $S_{M_m}=\overline{f}_md\Ha$. The sequences $\{\underline{f}_m\}$ and $\{\overline{f}_m\}$ are uniformly bounded from above and uniformly away from zero, therefore after suitable translations, as Lemma \ref{lemma-appendix} shows, the bodies $L_m$ and $M_m$ are contained in and contain a fixed ball. Hence, by taking subsequences, Blaschke's Selection Theorem shows that we may assume that $L_m\to \overline{L}$ and $M_m\to \overline{M}$ in the Hausdorff metric, for some convex bodies $\overline{L}$ and $\overline{M}$. However, $\underline{f}_m\to f_L$ and $\overline{f}_m\to f_M$ uniformly and thus weakly on $\Sn$, so $\overline{L}$ and $\overline{M}$ are translates of $L$ and $M$ respectively. Finally, we may assume that $\overline{L}=L$ and $\overline{M}=M$. Notice, in addition, that since $\underline{f}_m$ and $\overline{f}_m$ are positive $C^\infty$ functions, it follows (see \cite{pogorelov_1971}) that $L_m$ and $M_m$ are all of class $C_+^2$. 

Since $L_m\to L$ and $M_m\to M$ (and since $o\in\inter L_m\cap \inter M_m$ if $m$ is large enough), we conclude that $\rho_{L_m}/\rho_L\to 1$ and $\rho_{M_m}/\rho_M\to 1$, uniformly on $\Sn$. Thus, since $\min_{v\in \bd V}(\rho_M(v)/\rho_L(v))>1$, it follows that if $m$ is large enough, then $$\rho_{M_m}(v)>c\rho_{L_m}(v), \qquad\textnormal{for all }v\in \bd V,$$where $c>1$ is a constant which is independent of $m$. On the other hand, for $0<\varepsilon<c-1$, it holds 
$$\rho_{M_m}(v_0)<(1+\varepsilon)\rho_{L_m}(v_0)<c\rho_{L_m}(v_0).$$
Consequently, there exists $m_0\in\mathbb{N}$, such that for any $m\geq m_0$, the minimum
$$c_m:=\min_{v\in \cl U}\frac{\rho_{M_m}(v)}{\rho_{L_m}(v)}$$ is attained inside $U$. This shows that if $m\geq m_0$, there exists $v_m\in U$, such that $\rho_{c_mL_m}(v_m)=c_m\rho_{L_m}(v_m)=\rho_{M_m}(v_m)$, while $\rho_{c_mL_m}(v)\leq \rho_{M_m}(v)$, for all $v\in U$. Thus, the triple $(c_mL_m, M_m, v_m)$ satisfies assumptions (i) and (ii) imposed previously and, therefore, satisfies the (weaker) assumptions in the statement of Propostion \ref{prop-appendix}. Since $c_mL_m$ and $M_m$ are of class $C^2_+$, we conclude that if $m\ge m_0$, then 
\begin{equation}\label{eq-prop-appendix}c_m^{n-1}\underline{f}_m(\nu_m)=f_{c_mL_m}(\nu_m)\leq \overline{f}_m(\nu_m),
\end{equation}
where $\nu_m:=\eta_{L_m}(p_m)$ and $p_m:=\rho_{L_m}(v_m)v_m$. 

Next, set $W:=\eta_L(V)$ and let $\{\nu_{k_m}\}$ be a subsequence of $\{\nu_m\}$ that converges to some vector $\nu'\in\Sn$. We claim that $\nu'\in \cl W=\eta_L(\cl V)$. To see this, recall that $p_{k_m}$ is the unique point in $\bd L_{k_m}$, such that 
$$\langle p_{k_m},\nu_{k_m}\rangle=h_{L_{k_m}}(\nu_{k_m}).$$By taking a subsequence, we may assume that $p_{k_m}\to q$, for some point $q\in \cl V$. Thus, it holds $\langle q,\nu'\rangle=h_L(\nu')$ and, therefore, $\nu'=\eta_L(q)\in \eta_L(\cl V)=\cl W$. 

Since $\underline{f}_m$ and $\overline{f}_m$ converge uniformly on $\Sn$ and since $c_m\to 1$, we conclude by (\ref{eq-prop-appendix}) that $$f_L(\nu')\le f_M(\nu').$$

Notice that, in the argument described above, one can replace $V$ by any open set $V'\subseteq V$. Having this in mind, consider a sequence $\{V'_l\}$ of open sets in $\bd L$, all contained in $V$, such that $V'_l\searrow \{p\}$. Then, for $l\in \mathbb{N}$, there exists a  vector $\nu'_l\in \cl \eta_L(V'_l)$, such that $f_L(\nu'_l)\le f_M(\nu'_l)$. Since $\cl \eta_L(V'_l)\searrow \eta_L(\{p\})=\{\nu\}$, it follows that $\nu'_l\to\nu$ and consequently, $$f_L(\nu)\le f_M(\nu). $$ 

It remains to remove the extra assumption $V\setminus \{p\}\subseteq \inter M$. Let $L,M,V,p,\nu$ be as in the statement of Proposition \ref{prop-appendix}. At this point we are going to assume that $L$ is of class $C_+^2$ (the case where $M$ is of class $C_+^2$ can be treated completely similarly and is left to the reader). Let $g:\Sn\to\R$ be a $C^2$ function which is strictly positive on $\Sn\setminus\{\nu\}$ and satisfies $g(\nu)=0$. Then, for small positive $t$, the function $h_L-tg$ is also a support function of class $C_+^2$. Set $L_t$ for the $C_+^2$ convex body whose support function equals $h_L-tg$. Then, $h_{L_t}\leq h_L$, thus $L_t$ is contained in $L$.  Furthermore, it holds $h_{L_t}(\nu)=h_L(\nu)$ and, therefore, $L_t$ is supported by the supporting hyperplane of $L$, whose outer unit normal vector is $\nu$. But since $L_t$ is contained in $L$, it follows that (for small $t$) $p\in\bd L_t$. Moreover, $h_{L_t}(a)<h_L(a)$, for all $a\in W\setminus\{\nu\}=\eta_L(V)\setminus\{\nu\}$, so $\eta^{-1}_{L_t}(W)\setminus\{p\}\subseteq \inter L\subseteq \inter M$. This, together with the fact that $p\in \bd L_t\cap \bd M$, shows that $$f_{L_t}(\nu)\leq f_M(\nu),$$for small $t>0$. However, since $L$ is of class $C^2_+$, (\ref{eq-F_K}) shows that $f_{L_t}(\nu)\xrightarrow{t\to 0^+}f_L(\nu)$ and the result follows. $\square$
\medskip

\vspace{1.5 cm}

\noindent Christos Saroglou \\
Department of Mathematics\\
University of Ioannina\\
Ioannina, Greece, 45110 \\
E-mail address: csaroglou@uoi.gr \ \&\ christos.saroglou@gmail.com

\end{document}